\def\refer#1{~\ref{#1}}
\def\refeq#1{~(\ref{#1})}
\def\ccite#1{~\cite{#1}}
\def\longformule#1#2{
\displaylines{ \qquad{#1} \hfill\cr \hfill {#2} \qquad\cr } }
\def\inte#1{
\displaystyle\mathop{#1\kern0pt}^\circ }
\def\uapp{u_{\varepsilon}^{app}}
\let\al=\alpha
\let\d=\delta
\let\e=\varepsilon
\let\eps=\varepsilon
\let\s=\sigma
\let\f=\phi
\let\wh=\widehat
\def\cG{{\mathcal G}}
\def\dive{\mathop{\rm div}\nolimits}
\def\virgp{\raise 2pt\hbox{,}}
\def\cdotpv{\raise 2pt\hbox{;}}
\def\eqdefa{\buildrel\hbox{{\rm \footnotesize def}}\over =}
\def\Id{\mathop{\rm Id}\nolimits}
\def\C{\mathop{\mathbb C\kern 0pt}\nolimits}
\def\DD{\mathop{\mathbbf D\kern 0pt}\nolimits}
\def\K{\mathop{\mathbb K\kern 0pt}\nolimits}
\def\N{\mathop{\mathbb N\kern 0pt}\nolimits}
\def\Q{\mathop{\mathbbf Q\kern 0pt}\nolimits}
\def\R{\mathop{\mathbb R\kern 0pt}\nolimits}
\def\SS{\mathop{\mathbb S\kern 0pt}\nolimits}
\def\ZZ{\mathop{\mathbb Z\kern 0pt}\nolimits}
\def\TT{\mathop{\mathbb T\kern 0pt}\nolimits}
\def\P{\mathop{\mathbb P\kern 0pt}\nolimits}
 \def\R{{\mathop{\mathbb R\kern 0pt}\nolimits}}
\newcommand{\beq}{\begin{equation}}
\newcommand{\eeq}{\end{equation}}
\newcommand{\ben}{\begin{eqnarray}}
\newcommand{\een}{\end{eqnarray}}
\newcommand{\beno}{\begin{eqnarray*}}
\newcommand{\eeno}{\end{eqnarray*}}
\newtheorem{thm}{Theorem}
\newtheorem{lemma}{Lemma}[section]
\newtheorem{rmk}{Remark}[section]
\newtheorem{corol}{Corollary}[section]
\newtheorem{prop}{Proposition}[section]
\begin{document}
%%%%%%%%%%%%%%%%%%%%%%%%%%%%%%%%%%%%%%%%%%%%%%%%%%%%%%%%%%%%%%%%%%%%%%
%%  Text                                                            %%
%%%%%%%%%%%%%%%%%%%%%%%%%%%%%%%%%%%%%%%%%%%%%%%%%%%%%%%%%%%%%%%%%%%%%%

\title[The role of spectral anisotropy in   the Navier-Stokes equations]{The role of spectral anisotropy in the resolution of the  three-dimensional Navier-Stokes equations}

\author[J.-Y. Chemin]{Jean-Yves  Chemin}
\address[J.-Y. Chemin]%
{ Laboratoire J.-L. Lions UMR 7598\\ Universit{\'e} Paris VI\\
175, rue du Chevaleret\\ 75013 Paris\\FRANCE }
\email{chemin@ann.jussieu.fr }
\author[I. Gallagher]{Isabelle Gallagher}
\address[I. Gallagher]%
{ Institut de Math{\'e}matiques de Jussieu UMR 7586\\ Universit{\'e} Paris VII\\
175, rue du Chevaleret\\ 75013 Paris\\FRANCE }
\email{Isabelle.Gallagher@math.jussieu.fr}
\author[C. Mullaert]{Chlo\'e Mullaert}
\address[C. Mullaert]%
{ Laboratoire J.-L. Lions UMR 7598\\ Universit{\'e} Paris VI\\
175, rue du Chevaleret\\ 75013 Paris\\FRANCE }
\email{cmullaert@ann.jussieu.fr }

%\date{}

\begin{abstract}
We present   different classes of initial data to the three-dimensional, incompressible Navier-Stokes equations, which generate a global in time, unique solution though they may be arbitrarily large in the end-point function space in which a fixed-point argument may be used to solve the equation locally in time. The main feature of these initial data is an anisotropic distribution of their frequencies. One of those classes is taken from~\cite{cg3}-\cite{cgz}, and another one is new.
 \end{abstract}

\keywords {Navier-Stokes equations, global wellposedness, anisotropy.}

\maketitle

%%%%%%%%%%%%%%%%%%%%%%%%%%%%%%%%%%%%%%%%%%%%%%%%%%%%%%%%%%%%%
%%% INTRODUCTION
%%%%%%%%%%%%%%%%%%%%%%%%%%%%%%%%%%%%%%%%%%%%%%%%%%%%%%%%%%%%%

\setcounter{equation}{0}
\section{Introduction}
In this article, we are interested in the construction of global smooth solutions which cannot be obtained in the framework of small data. Let us recall what the incompressible Navier-Stokes (with constant density) is:
\[
{(NS)}\ \left\{
\begin{array}{l}
\partial_{t} u  +u\cdot\nabla u-\Delta u=-\nabla p \quad \mbox{in} \quad  \R^+ \times \R^3\\
\dive u =0\\
u_{|t=0}=u_{0}.
\end{array}
\right.
\]
In all this paper~$x= (x_h, x_3)=(x_1,x_2,x_3)$ will denote  a generic point of~$\R^3$ and we shall write~$u=(u^h,u^3)=(u^1,u^2,u^3)$ for a vector field on~$\R^3=\R^2_h\times\R_v$. We also define the    horizontal differentiation operators~$\nabla^h \eqdefa (\partial_1, \partial_2)$ and~$\mbox{div}_h \eqdefa \nabla^h  \, \cdot  $, as well as~$\Delta_h \eqdefa\partial_1^2 + \partial_2^2$.

First, let us recall the history of global existence results for small data.  In his seminal work\ccite{leray}, J. Leray  proved in~1934 that if~$\|u_0\|_{L^2}\|\nabla u_0\|_{L^2}$ is small enough, then there exists a global regular solution of~$(NS)$. Then in\ccite{fk}, H. Fujita and T. Kato proved in~1964 that if 
$$
\|u_0\|_{\dot H^{\frac 12 }} \eqdefa \Bigl(\int_{\R^3} |\xi|\, |\wh u_0(\xi)|^2d\xi\Bigr)^{\frac 1 2}
$$
is small enough, then there exists a unique global solution in the space
$$
C_b(\R^+;\dot H^{\frac 1 2}) \cap L^4(\R^+;\dot H^1).
$$
After works of many authors on this question (see in particular\ccite{giga},\ccite{kato},\ccite{weissler},and\ccite{cannonemeyerplanchon}), the optimal norm to express the smallness of the initial data was found  on 2001 by H. Koch and D. Tataru in\ccite{kochtataru}. This is the~$BMO^{-1}$ norm. We are not going to define precisely this norm here. Let us simply notice that this norm is in between two Besov norms which can be easily defined. More precisely we have
$$
\displaylines{
\|u_0\|_{\dot B^{-1}_{\infty,\infty}} \lesssim \| u_0\|_{BM0^{-1}} \lesssim\| u_0\|_{\dot B^{-1}_{\infty,2}}
\quad\hbox{with}\cr
\| u_0\|_{\dot B^{-1}_{\infty,\infty}}  \eqdefa \sup_{t>0} t^{\frac 1 2} \|e^{t\Delta} u_0\|_{L^\infty} 
\quad \hbox{and}\quad
 \| u_0\|_{\dot B^{-1}_{\infty,2}} \eqdefa \|e^{t\Delta} u_0\|_{L^2(\R^+;L^\infty)}.
}  
$$
Fisrt of all, let us mention that~$\dot H^{\frac 1 2}$ is continuously embedded in~$\dot B^{-1}_{\infty,2}$. To have a more precise idea of what these spaces mean, let us observe that the space~${\dot B^{-1}_{\infty,\infty}}$ we shall denote by~$\dot C^{-1}$ from now on, contains all the derivatives of order~$1$  of 
bounded functions. Let us give some examples. If we consider a divergence free vector field of the type
$$
u _{\e,0}(x)= \frac 1 \e \cos\Bigl(\frac {x_3} \e\Bigr) \bigl(-\partial_2\phi (x), \partial_1\phi(x), 0\bigr)
$$ 
for some given function~$\phi$ in the Schwartz class of~$\R^3$, then we have
$$
\|u_{\e,0}\|_{\dot B^{-1}_{\infty,2}} \sim \|u_{\e,0}\|_{\dot C^{-1}}\sim 1\quad\hbox{and}\quad
\|u_{\e,0}\|_{\dot H^{\frac 12}} \sim \e^{-\frac 32}.
$$
Another example which will be a great interest for this paper is the case when
$$
u _{\e,0}(x)=  \phi_0(\e x_3) \bigl(-\partial_2\phi (x_h), \partial_1\phi(x_h), 0\bigr).
$$
As claimed  by Proposition 1.1 of\ccite{cg3}, we have, for small enough~$\e$,
\beq
\label{minorC-1}
 \|u_{\e,0}\|_{\dot C^{-1}} \geq \frac 12  \|\f\|_{\dot C^{-1}(\R^2)} \|\f_0\|_{L^\infty(\R)}.
\eeq
In this paper, we are going to consider inital data the regularity of which will be (at least)~$\dot H^{\frac 1 2}$. Our interest is focused on the size of the initial data measured in the~$\dot C^{-1}$ norm.

Let us define~${\mathcal G}$ the set of divergence free
vector fields in~$\dot H^{\frac 1 2}(\R^3)$ generating global smooth solutions to~$(NS)$ and let us recall  some known results about the geometry of~$\cG$.

 First of all, Fujita-Kato' theorem~\cite{fk} can be interpreted as follows: the set~$\cG$ contains a ball of positive radius. Next let us assume that~$\cG$ is not the whole space~$\dot H^{\frac 12 }$ (in other words, we assume that an initial data exists which generates singularities in finite time). Then  there exists a critical radius~$\rho_c$ such that if~$u_0$ is an initial data such that~$\|u_0\|_{\dot H^  {\frac 1 2}}<\rho_c$, then~$u_0$ generates a global regular solution and for any~$\rho>\rho_c$,  there  exists an intial data of~$\dot H^{\frac 1 2}$ norm~$\rho$ which  generates a singularity at finite time. Using the  theory of profiles introduced in the context of Navier-Stokes equations  by the second author (see\ccite{gallagherprofil}), W. Rusin and V. Sverak prove in\ccite{rusinsverack} that the set (where~$ \cG^c$ denotes the complement of~$ \cG$ in~$ \dot H^{\frac 1 2}$)
 $$
 \cG^c\cap \bigl\{u_0 \in \dot H^{\frac 1 2}\,/\ \|u_0 \|_{ \dot H^{\frac 1 2}}=\rho_c\bigr\} 
 $$
 is non empty and compact up to dilations and translations.

In collaboration with P. Zhang, the first two authors prove in\ccite{cgz} that  any point~$u_0$ of~$\cG$, 
is at the center of an interval~$I$  included in~$\cG$ and such that the length of~$I$ measured in the~$\dot C^{-1}$ norm is arbitrary large. In other words for any~$u_0$ in~$\cG$, there exist arbitrary large (in~$\dot C^{-1}$) perturbations of this initial data that generate global solutions. As we shall see, the perturbations  are strongly anisotropic.  

Our aim is to give a new point of view about  the important role played  by  anisotropy in the resolution of the Cauchy problem for~$(NS)$.

 The first result we shall present
shows that as soon as enough anisotropy is present in the initial data (where the degree of anisotropy is given by the norm of the data only), then it generates a global unique solution. A similar result can be found in~\cite[Theorem 1]{bahourigallagher}.
 
\begin{thm}
\label{anisodebile}
{\sl A constant~$c_0$ exists which satisfies the following.  If~$(u_{\e,0})_{\e>0}$  is a family of divergence free vector field  in~$ \dot H^\frac12 $ such that~$\|u_{\e,0}\|_{  \dot H^\frac12 } \leq \rho$ and satisfying
\begin{equation}\label{hypsuppdebile}
\forall \xi \in \mbox{\rm Supp\,} \, \widehat u_{\e,0} \, , \quad \mbox{either} \quad |\xi_h| \leq \eps |\xi_3| \quad  \mbox{or} \quad
 |\xi_3| \leq \eps |\xi_h| \, , 
 \end{equation}
then, if~$\e^4\|u_{\e,0}\|_{  \dot H^{\frac12 }}$ is less than~$c_0$,~$u_{\e,0}$ belongs to~$\cG$.
}
\end{thm}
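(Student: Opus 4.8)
The strategy is to split the initial data according to the spectral hypothesis~\eqref{hypsuppdebile} into a "horizontal" piece and a "vertical" piece, and to exploit anisotropic smoothing estimates to show that each piece is small in a scale-invariant space adapted to it. Write $u_{\e,0} = u_{\e,0}^{h-\mathrm{low}} + u_{\e,0}^{v-\mathrm{low}}$ where the first term is the Fourier localisation to $\{|\xi_h| \le \e|\xi_3|\}$ and the second to $\{|\xi_3| \le \e|\xi_h|\}$ (the overlap is harmless, one can simply use a partition of $\R^3$ into these two cones). On the first set one has $|\xi| \le (1+\e)|\xi_3| \lesssim |\xi_3|$ and also $|\xi_h|\le\e|\xi|$; symmetrically on the second set $|\xi|\lesssim|\xi_h|$ and $|\xi_3|\le\e|\xi|$. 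The point is that an $\dot H^{1/2}$ bound together with this anisotropy forces smallness in $\dot C^{-1} = \dot B^{-1}_{\infty,\infty}$, the endpoint space. Indeed, using the characterisation $\|f\|_{\dot C^{-1}} = \sup_{t>0} t^{1/2}\|e^{t\Delta}f\|_{L^\infty}$, and estimating $\|e^{t\Delta}f\|_{L^\infty}$ by $\|\widehat f\|_{L^1}$ after inserting $e^{-t|\xi|^2}$, one gets on the first cone, by Cauchy--Schwarz with the weight $|\xi|$,
\[
\|e^{t\Delta}u_{\e,0}^{h-\mathrm{low}}\|_{L^\infty} \lesssim \Bigl(\int_{|\xi_h|\le\e|\xi_3|} \frac{e^{-2t|\xi|^2}}{|\xi|}\,d\xi\Bigr)^{1/2}\|u_{\e,0}\|_{\dot H^{1/2}},
\]
and the remaining integral, after the change of variables exploiting that the $\xi_h$-slice has measure $\lesssim \e^2|\xi_3|^2$, is bounded by $C\,\e\, t^{-1/2}$; an identical computation with the roles of $\xi_h,\xi_3$ exchanged handles the vertical piece. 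Hence $\|u_{\e,0}\|_{\dot C^{-1}} \lesssim \e\,\rho$ — more precisely one should track powers carefully to recover exactly the $\e^4\rho \le c_0$ threshold, the extra powers of $\e$ coming from the fact that one must also control a genuinely scale-invariant norm (such as $BMO^{-1}$ or a suitable anisotropic Besov norm), not merely $\dot C^{-1}$, to run a fixed-point argument.

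Once smallness in a space where Navier--Stokes is well-posed is established, global existence and uniqueness follow from the Koch--Tataru theorem (or, if one prefers to stay with the anisotropic Besov spaces, from the anisotropic well-posedness results in the spirit of~\cite{cg3}, \cite{cgz}): the solution map is a contraction on a ball of the relevant critical space, producing a global solution, and the persistence of $\dot H^{1/2}$ regularity — which is what is needed to conclude $u_{\e,0}\in\cG$ rather than merely global existence in a rough class — is propagated by the standard parabolic smoothing bootstrap for (NS) with initial data simultaneously in $\dot H^{1/2}$ and in the small-data critical space. So the scheme is: (i) cone decomposition; (ii) anisotropic Cauchy--Schwarz to bound the scale-invariant norm by $C\e^4\rho$; (iii) quote the small-data global well-posedness theorem; (iv) bootstrap $\dot H^{1/2}$ regularity to land in $\cG$.

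The main obstacle is step (ii): obtaining exactly the right power of $\e$. A naive estimate only controls $\dot C^{-1}$, and $\dot C^{-1} = \dot B^{-1}_{\infty,\infty}$ is \emph{not} a space in which (NS) is known to be well-posed for small data (that is the whole point of Bourgain--Pavlović type ill-posedness results near this endpoint). One therefore needs to interpolate: on the horizontal cone the data also lives, with a gain, in an anisotropic Besov space $\dot B^{0,-1}$ built on $L^\infty_{x_h}L^2_{x_3}$ type norms (where the vertical variable is essentially one-dimensional so $L^2_v \hookrightarrow L^\infty_v$ costs little), and the $\e$-powers accumulate from: the measure of the cone slice ($\e^2$), the mismatch between $L^\infty$ and the $L^2$-based Besov summation ($\e$ or so), and the Duhamel/bilinear estimate in the fixed-point argument which needs a definite-size small norm. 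Keeping these bookkeeping factors honest, and checking that the bilinear operator of (NS) is bounded on the anisotropic space one has chosen (this is where the anisotropic paraproduct machinery of~\cite{cg3} enters, and where one must be careful that the divergence-free condition is used to avoid the worst frequency interactions), is the technical heart of the argument; everything else is soft.
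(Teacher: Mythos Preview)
Your cone decomposition in step (i) matches the paper exactly. But the route you take from there is both harder than necessary and, as it stands, incomplete: you correctly observe that smallness in $\dot C^{-1}$ is insufficient (no well-posedness there), and then gesture towards anisotropic Besov spaces and paraproduct machinery without carrying out the estimate, conceding that ``keeping these bookkeeping factors honest\ldots\ is the technical heart'' of the argument. As written, you have not established smallness in any space where small-data global well-posedness is actually known, so step (ii) is a genuine gap.

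The paper bypasses all of this. Instead of $\dot C^{-1}$, $BMO^{-1}$, or anisotropic Besov spaces, it uses the anisotropic \emph{Sobolev} space $\dot H^{s,\frac12-s}$ (the norm with Fourier weight $|\xi_h|^{2s}|\xi_3|^{1-2s}$), for which Iftimie's theorem furnishes small-data global well-posedness whenever $0<s<\tfrac12$. On the cone $\{|\xi_3|\le\eps|\xi_h|\}$ one has $|\xi_h|^{2s}|\xi_3|^{1-2s}\le\eps^{1-2s}|\xi_h|$, and on the cone $\{|\xi_h|\le\eps|\xi_3|\}$ one has $|\xi_h|^{2s}|\xi_3|^{1-2s}\le\eps^{2s}|\xi_3|$; integrating against $|\widehat u_{\eps,0}|^2$ gives
\[
\|u_{\eps,0}\|_{\dot H^{s,\frac12-s}}\;\le\;\eps^{\min(s,\,\frac12-s)}\,\|u_{\eps,0}\|_{\dot H^{1/2}}.
\]
Choosing $s=\tfrac14$ optimises the exponent to $\eps^{1/4}$, and the theorem follows. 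That is the entire argument: two lines of Fourier-weight manipulation plus a black-box citation. No heat kernel, no paraproducts, no Koch--Tataru, no bootstrap for $\dot H^{1/2}$ persistence (Iftimie's framework already delivers a smooth global solution).

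Your approach might be salvageable via a genuine anisotropic-Besov computation, but the paper's explicit point (see the remark immediately after the proof) is that this theorem is \emph{soft} once one picks the right scale-invariant norm, and does not exploit the structure of the nonlinearity --- in deliberate contrast with the deeper Theorems~\ref{slowvar} and~\ref{spectral}.
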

Let us remark that this result has  little to do with the precise structure of the equations: as  will appear clearly in its   proof in Section\refer{prelims}, it
can actually easily  be recast  as a small data theorem, the smallness being measured in anisotropic Sobolev spaces.
It is therefore of a different nature than the next Theorems~{\rm\ref{slowvar}} and~{\rm\ref{spectral}}, whose proofs on the contrary rely heavily on the structure of the nonlinearity (more precisely on the fact that the two-dimensional equations are globally well-posed).

The next theorem shows that as soon as the initial data has slow variations in one direction, then it generates a global solution, which, roughly speaking, corresponds to the case when the support in Fourier space of the initial data lies in the region where~$ |\xi_3| \leq \eps |\xi_h|$.
Furthermore, one can add to any initial data in~$\cG$ any such slowly varying data, and the superposition still generates a global solution (provided the variation is slow enough and   the profile vanishes at zero).
\begin{thm}[\cite{cg3},\cite{cgz}]\label{slowvar}
 {\sl Let~$v_0^h=(v_0^1,v_0^2)$ be a  horizontal, smooth
 divergence free  vector field on~$\R^3$ (i.e.~$v_0^h$ is  in~$L^2(\R^3)$ as well as all its derivatives), 
 belonging, as well as all its derivatives, to~$L^2(\R_{x_3}; \dot H^{-1}(\R^2))$; let $w_0$ be 
 a smooth divergence free vector field on~$\R^3$. Then, there exists a positive~$\e_0$ depending on norms of~$v_0^h$ and~$w_0$ such that,  if~$\e\leq \e_0$, then the following initial data belongs to~$\cG$ :
$$
v_{\e,0} (x) \eqdefa   ( v_0^h+\e w^h_0,w_0^3 )(x_1,x_2,\e x_3) \, .
$$
If moreover~$v^h_0(x_1,x_2,0)= w_0^3(x_1,x_2,0) =0$ for all $(x_1,x_2)\in\R^2$, and if~$u_0$ belongs to~$\cG$, then there exists a positive number~$\e'_0 $ depending on~$u_0$ and
on norms of~$v_0^h$ and~$w_0$ such that if~$\e\leq \e'_0$,
the following initial data belongs to~$\cG$ :
$$
u_{\e,0}  \eqdefa u_0 + v_{\e,0}   \, .
$$
}
\end{thm}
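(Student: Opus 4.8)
The plan is to treat $v_{\e,0}$ as a perturbation of the two-dimensional flow obtained by freezing $x_3$, exploiting the global well-posedness of the 2D Navier--Stokes equations. After the change of scale $y_3 = \e x_3$, the rescaled unknown $u_\e(t,x_h,y_3) \eqdefa u(t,x_h,x_3)$ solves an equation in which the vertical dissipation $-\Delta_h - \e^2 \partial_{y_3}^2$ is anisotropic and the vertical advection $u^3 \partial_{y_3}$ comes with an extra power of $\e$; writing $u^3 = \e v^3$, one gets a system where $y_3$ plays the role of a parameter up to $O(\e)$ corrections. The natural ansatz for an approximate solution is $\uapp(t,x_h,y_3) = (v^h(t,x_h,y_3) + \e w^h(t,x_h,y_3), v^3(t,x_h,y_3))$, where for each fixed $y_3$ the pair $v^h(\cdot,\cdot,y_3)$ is the global 2D solution with data $v_0^h(\cdot,y_3)$, and $w$ is chosen to absorb the first-order terms (in particular the divergence constraint forces $\partial_{y_3} v^3 + \dive_h w^h = 0$, which is where the hypotheses on $v_0^h$ and $w_0^3$ and their $y_3$-derivatives enter, giving the needed $L^2(\R_{y_3};\dot H^{-1}(\R^2))$-type bounds on $v^3$).

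First I would make the construction of $(v^h, w, v^3)$ precise and establish, using 2D Navier--Stokes energy estimates together with propagation of regularity and the hypotheses on $v_0^h, w_0$, that $\uapp$ lies in all the relevant anisotropic spaces uniformly in $\e$, and that the error $\partial_t \uapp + \uapp\cdot\nabla\uapp - \Delta\uapp + \nabla p^{app} - \dive \uapp \cdot(\dots)$ is $O(\e)$ in a suitable norm. Then I would set $R_\e = u_\e - \uapp$ and write the equation it satisfies, which is a forced, perturbed Navier--Stokes system with a source term of size $O(\e)$ and a linear term involving $\uapp$. A standard energy estimate in $\dot H^{\frac12}$ (or in an anisotropic Sobolev space $\dot H^{s,s'}$ adapted to the scaling, using the Sobolev product laws in the horizontal and vertical variables separately, and crucially using that $\uapp$ is bounded independently of $\e$) gives, via a continuation/bootstrap argument, that $R_\e$ stays small globally in time provided $\e \leq \e_0$, with $\e_0$ depending only on the norms of $v_0^h$ and $w_0$. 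This yields global existence for $v_{\e,0}$.

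For the superposition statement, I would look for the solution in the form $u + \uapp + R_\e$ where $u$ is the given global solution associated with $u_0 \in \cG$. The hypothesis $v_0^h(x_h,0) = w_0^3(x_h,0) = 0$ ensures that $\uapp$, after undoing the rescaling, tends to $0$ in the appropriate norm as $\e \to 0$ on any fixed time interval, because the profiles vanish on $\{y_3 = 0\}$ and are Lipschitz in $y_3$, so $v_0^h(x_h,\e x_3)$ is pointwise $O(\e|x_3|)$; this decay is what lets the coupling terms $u\cdot\nabla\uapp$ and $\uapp\cdot\nabla u$ be controlled. The remainder equation for $R_\e$ is then again a perturbed Navier--Stokes system, now with coefficients depending on the known functions $u$ and $\uapp$; one runs the same bootstrap, using the global-in-time bounds on $u$ coming from $u_0\in\cG$, and absorbs all interaction terms into the small parameter $\e$ and the smallness of $R_\e$.

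The main obstacle is the global-in-time control of the remainder $R_\e$: the forcing is only $O(\e)$, but the linear operator $R\mapsto \uapp\cdot\nabla R + R\cdot\nabla \uapp$ need not be dissipative, so one cannot simply Gr\"onwall. The resolution is to exploit the special horizontal-vs-vertical structure — the only genuinely dangerous term is the one where a vertical derivative $\partial_{y_3}$ falls on $R$, but that term carries a factor $\e$ from the rescaled advection $u^3\partial_{y_3} = \e v^3 \partial_{y_3}$, and hence is itself a small perturbation. Making this quantitative requires choosing the right anisotropic functional framework (so that the horizontal dissipation $\int \|\nabla^h R\|^2$ genuinely controls the horizontal product terms via 2D-type Ladyzhenskaya/Gagliardo--Nirenberg inequalities) and carefully tracking how many $\e$'s each term carries; this bookkeeping, rather than any single estimate, is the crux of the argument, and it is precisely the part that uses the structure of the nonlinearity as advertised in the remark following Theorem \ref{anisodebile}.
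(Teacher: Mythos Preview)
The paper does not prove this theorem; it explicitly defers to \cite{cg3} and \cite{cgz}. Your outline is essentially the strategy used there, and it mirrors closely the proof the paper \emph{does} give for the analogous Theorem~\ref{spectral}: build an approximate solution from the family of 2D Navier--Stokes flows indexed by the slow variable, add a correction solving a linear transport--diffusion problem, show the defect of the approximate solution is $O(\e)$ in $L^2(\R^+;\dot H^{-1/2})$, and close by an $\dot H^{1/2}$ energy estimate on the remainder.

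Two points where your sketch is slightly off the mark. First, your ``main obstacle'' is not quite where you place it. The linear terms $\uapp\cdot\nabla R_\e + R_\e\cdot\nabla\uapp$ \emph{are} handled by a direct Gr\"onwall in $\dot H^{1/2}$: once one knows $\uapp\in L^2(\R^+;L^\infty)$ and $\nabla\uapp\in L^2(\R^+;L^\infty_vL^2_h)$ uniformly in $\e$ (this is exactly Lemma~\ref{bounds} in the paper's parallel argument), the Gr\"onwall exponential is bounded independently of $\e$, and the $O(\e)$ forcing then yields global smallness of $R_\e$. The genuine work lies upstream, in establishing those uniform bounds on $\uapp$ and in showing the forcing is small --- not in the final bootstrap.

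Second, for the superposition statement, the mechanism controlling the cross terms $u\cdot\nabla v_\e^{app}+v_\e^{app}\cdot\nabla u$ is sharper than the pointwise observation $v_0^h(x_h,\e x_3)=O(\e|x_3|)$. The precise tool (Lemma~3.4 of \cite{cgz}, quoted in the paper's proof of Proposition~\ref{estimateH}) is the product estimate
\[
\|ab\|_{\dot H^{-\frac12}}\lesssim \|a\|_{L^2_v\dot H^{\frac12}_h}\|b(\cdot,0)\|_{L^2_h}+\|x_3 a\|_{L^2}\|\partial_3 b\|_{L^\infty_v\dot H^{\frac12}_h},
\]
which cleanly separates the trace at $x_3=0$ (zero by hypothesis) from the vertical derivative (carrying an $\e$). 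One also needs to approximate the global solution $u$ by compactly-supported-in-time smooth functions so that the weight $x_3$ is harmless. Your intuition is right, but this lemma is the device that makes it quantitative.
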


One can assume  that~$v_0^h$ and~$w_0^3$ have frequency supports in a given ring of~$\R^3$, so that~{\rm(\ref{hypsuppdebile})} holds. Nevertheless Theorem~{\rm\ref{anisodebile}}  not apply since~$v_{\e,0} $ is of the order of~$\eps^{-\frac12} $ in~$\dot H^\frac12$. Actually the proof of Theorem~{\rm\ref{slowvar}} is deeper than that of Theorem~{\rm\ref{anisodebile}}, as it uses the structure of the quadratic term in~{\rm(NS)}.
 The proof of Theorem~{\rm\ref{slowvar}} may be found in~\cite{cg3} and~\cite{cgz}, we shall not give it here. 
Note that  Inequality\refeq{minorC-1} implies that~$v_{\e,0}$ may be chosen arbitrarily large in~$\dot C^{-1}$.

One formal way to translate the above result is that the vertical frequencies of the initial data~$v_{\eps,0}$ are actually very small, compared with the horizontal frequencies. The following theorem gives a   statement in terms of frequency sizes, in the spirit of Theorem~{\rm\ref{anisodebile}}. However as already pointed out,
 Theorem~{\rm\ref{anisodebile}} again  does not apply because the initial data is too large in~$\dot H^\frac12$. 
Notice also that the assumption made in  the statement of Theorem~\ref{slowvar} that the profile should vanish at~$x_3 = 0$ is replaced here by a smallness assumption in~$L^2(\R^2)$.
% (this actually could also  have been the case in Theorem~\ref{slowvar}, but was not stated as such in~\cite{cgz}).
\begin{thm}
\label{spectral}
{\sl Let~$(v_{\e,0})_\e$ be a family of smooth divergence free vector field, uniformly bounded in the space~$L^\infty(\R;\dot H^s(\R^2))$ for all~$ s \geq -1$,   such that~$
(\sqrt\eps \, v_{\e,0} )_\e$ is uniformly bounded in the space~$ L^2(\R_{x_3};\dot H^s(\R^2)) $ for~$ s \geq -1$, 
and satisfying
$$
\forall \eps \in ]0,1[ \, , \quad \forall \xi \in \mbox{\rm Supp\,} \, \widehat v_{\e,0} \, , \quad   
 |\xi_3| \leq  \eps |\xi_h|  \, .
$$
Then there exists a positive number~$\e_0 $ such that for all~$\eps \leq \eps_0$, the data~$v_{\e,0}$  belongs to~$\cG$.

Moreover if~$u_0$ belongs to~$\cG$, then there are positive constants~$c_0$ and~$\e_0'$ such that if
$$
\|v_{\e,0}(\cdot,0) \|_{L^2(\R_h^2)} \leq c_0
$$
then for all~$\e\leq \e'_0$,
 the following initial data belongs to~$\cG$ :
$$
u_{\e,0}  \eqdefa u_0 + v_{\e,0} \,  .
$$
}
\end{thm}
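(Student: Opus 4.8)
The plan is to follow the strategy of Theorem~\ref{slowvar} (from~\cite{cg3},~\cite{cgz}), replacing the "vanishing at $x_3=0$'' hypothesis by the smallness of $v_{\e,0}(\cdot,0)$ in $L^2(\R^2_h)$, and to reduce the spectral condition $|\xi_3|\le\e|\xi_h|$ to a slow-variation ansatz in the vertical variable. The key observation is that if $\widehat v_{\e,0}$ is supported in $\{|\xi_3|\le\e|\xi_h|\}$, then after the anisotropic rescaling $x_3\mapsto x_3/\e$ the rescaled datum has frequency support in $\{|\xi_3|\le|\xi_h|\}$ and the uniform bounds $\|v_{\e,0}\|_{L^\infty(\R;\dot H^s(\R^2))}\lesssim 1$, $\|\sqrt\e\, v_{\e,0}\|_{L^2(\R_{x_3};\dot H^s(\R^2))}\lesssim 1$ translate exactly into the hypotheses on the profile $v_0^h$, $w_0$ appearing in Theorem~\ref{slowvar} (the $L^\infty_{x_3}\dot H^s(\R^2)$ bound controls the profile and its $x_3$-derivatives, while the $\sqrt\e$-weighted $L^2_{x_3}$ bound provides the $L^2(\R_{x_3};\dot H^{-1}(\R^2))$ type control after undoing the $\e$-dilation in $x_3$). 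Thus the first assertion, that $v_{\e,0}\in\cG$ for $\e$ small, should follow directly from Theorem~\ref{slowvar} once one checks that the divergence-free structure is preserved and that the spectral localization yields a profile $(v_0^h,w_0)$ of the required regularity; I would spell out this dictionary between the spectral hypothesis and the slow-variation hypothesis as the first step.

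For the superposition statement, I would build the approximate solution $\uapp_\e$ as in~\cite{cgz}: let $u$ be the global solution associated with $u_0\in\cG$, let $(v_h,v_3)$ be the two-dimensional-in-$x_h$ (parametrized by the slow variable $y_3=\e x_3$) solution of the limit system driven by the profile of $v_{\e,0}$, and set $\uapp_\e \eqdefa u + v_{\e,\mathrm{app}}$ where $v_{\e,\mathrm{app}}$ is the rescaled profile corrected by a suitable $\e w_{\e,\mathrm{app}}$ term restoring the divergence-free condition. The two-dimensional global well-posedness (the structural input flagged in the remark after Theorem~\ref{anisodebile}) guarantees that $v_{\e,\mathrm{app}}$ exists globally with norms uniform in $\e$. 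One then writes the equation satisfied by $R_\e \eqdefa u_{\e,0}\text{-solution} - \uapp_\e$, checks that $\uapp_\e$ solves $(NS)$ up to a remainder $F_\e$ that is $o(1)$ in the relevant anisotropic space (here is where the smallness $\|v_{\e,0}(\cdot,0)\|_{L^2(\R_h^2)}\le c_0$ enters: it controls the trace term arising when integrating the vertical derivative, exactly the term that the vanishing hypothesis killed in Theorem~\ref{slowvar}), and closes a fixed-point/energy estimate for $R_\e$ in a space such as $C_b(\R^+;\dot H^{1/2})\cap L^2(\R^+;\dot H^{3/2})$ or its anisotropic refinement.

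The main obstacle I anticipate is the remainder estimate, and within it the handling of the boundary/trace contribution at $x_3=0$. In Theorem~\ref{slowvar} the hypothesis $v_0^h(x_h,0)=w_0^3(x_h,0)=0$ makes certain source terms in the equation for $\uapp_\e$ vanish identically; here we only have them small in $L^2(\R^2_h)$, so instead of vanishing they must be absorbed. The delicate point is that these terms come with an unfavorable power of $\e$ (typically $\e^{-1/2}$ from the rescaling), so one cannot simply bound them by $C\e^{1/2}$; one must instead show they are multiplied by $c_0$ and use a smallness-times-boundedness bootstrap, i.e. choose $c_0$ small enough that the corresponding term in the energy inequality for $R_\e$ is subcritical. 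Controlling the interaction between $u$ (which carries no particular vertical structure) and the anisotropic profile, using the gain coming from the frequency separation $|\xi_3|\le\e|\xi_h|$ to compensate the loss, is the technical heart of the argument; everything else is the now-standard machinery of anisotropic Littlewood–Paley estimates and the $2$D well-posedness used in~\cite{cg3} and~\cite{cgz}.
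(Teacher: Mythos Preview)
Your overall architecture (approximate solution built from the global solution~$u$ plus a quasi-2D piece, then an $\dot H^{1/2}$ energy estimate on the remainder~$R_\e$) is the same as the paper's. But two points in your plan would not go through as written.

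First, the reduction of the first assertion to Theorem~\ref{slowvar} via the rescaling $x_3\mapsto x_3/\e$ does not work: the rescaled profile you obtain depends on~$\e$, whereas Theorem~\ref{slowvar} requires a \emph{fixed} profile $(v_0^h,w_0)$. You would have to reprove Theorem~\ref{slowvar} uniformly over an $\e$-dependent family of profiles, which is exactly the content of Theorem~\ref{spectral}; nothing is saved. The paper instead works directly with~$v_{\e,0}$ and uses a decomposition you do not mention: it sets $\overline v^h_{\e,0}\eqdefa\P_h v^h_{\e,0}$ (horizontal Leray projection) and $w_{\e,0}\eqdefa v_{\e,0}-(\overline v^h_{\e,0},0)$, so that $\overline v^h_{\e,0}$ is 2D-divergence-free and the spectral hypothesis forces $|\widehat w^h_{\e,0}|\le\e|\widehat w^3_{\e,0}|$. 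Then $\overline v^h_\e$ solves (NS2D)$_{x_3}$ for each fixed~$x_3$, while $w_\e$ solves the \emph{linear} transport--diffusion system $\partial_t w_\e+\overline v^h_\e\cdot\nabla^h w_\e-\Delta w_\e=-\nabla q_\e$. This linear equation for~$w_\e$---not a divergence-free corrector to an approximate NS solution---is the missing ingredient in your sketch, and its a~priori estimates (Proposition~\ref{estimatesw}) are the technical heart of the proof.

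Second, your diagnosis of the trace term is off. The interaction $H_\e=u\cdot\nabla v_\e^{app}+v_\e^{app}\cdot\nabla u$ is handled via the product inequality $\|ab\|_{\dot H^{-1/2}}\lesssim\|a\|_{L^2_v\dot H^{1/2}_h}\|b(\cdot,0)\|_{L^2_h}+\|x_3 a\|_{L^2}\|\partial_3 b\|_{L^\infty_v\dot H^{1/2}_h}$; the second summand carries a favorable power of~$\e$ (since $\partial_3 v_\e^{app}$ is small), while the first summand carries \emph{no} power of~$\e$ at all---it is simply bounded by a constant times $\|v_\e^{app}(\cdot,0)\|_{L^2_h}$. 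There is no $\e^{-1/2}$ to fight; the term just fails to vanish as~$\e\to0$, and the smallness comes entirely from propagating $\|v_{\e,0}(\cdot,0)\|_{L^2_h}\le c_0$ through the 2D energy estimate and the linear equation for~$w_\e$ at $x_3=0$ (Proposition~\ref{estimateat0}).
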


Let us remark that  as in~\cite{cg3}, the data~$v_{\e,0}$ may be arbitrarily large in~$\dot C^{-1}$. Note that Theorems~{\rm\ref{slowvar}} and~{\rm\ref{spectral}}, though of similar type, are not   comparable (unless one imposes the spectrum of the initial profiles in Theorem~{\rm\ref{slowvar}} to be included in a ring of~$\R^3$, in which case the result follows from Theorem~{\rm\ref{spectral}}). 

The paper is organized as follows.  In the second section, we introduce anisotropic Sobolev spaces and  as a warm up, we prove Theorem\refer{anisodebile}.

The rest of the paper is devoted to the proof of Theorem\refer{spectral}.  In the thid section, we define a (global) approximated solution and prove estimates on this approximated solutions  and prove Theorem\refer{spectral}. 

The last section is devoted to the proof of a propagation result for a linear transport diffusion equation  we admit in the preceeding section. Let us point out that we make the choice not to use the technology anisotropic paradifferential calculus and to present an elementary proof.

\section{Preliminaries: notation and anisotropic function spaces}
\label{prelims}
In this section we recall the definition of the various function spaces we shall be using in this paper, namely     anisotropic Lebesgue and Sobolev spaces.

We denote by~$L^p_hL^q_v$ (resp.~$L^q_v(L^p_h)$) the space~$L^p(\R_h^2;L^q(\R_v))$ (resp.~$L^q(\R_v;L^p(\R_h^2))$ equipped with the norm
$$
\|f\|_{L^p_hL^q_v} \eqdefa\bigg( \int_{\R_h^2}  \Big( \int_{\R_v} | f(x_h,x_3)|^q \, dx_3\Big)^\frac pq \, dx_h\bigg)^\frac1p
$$
and similarly~$\dot H^{s,\s}$ is the space~$\dot H^s(\R^2;\dot H^\sigma(\R))$ with
$$
\|f\|_{\dot H^{s,\s}} \eqdefa \bigg(\int_{\R^3}  |\xi_h|^{2s}  |\xi_3|^{2\sigma} |\widehat f(\xi_h,\xi_3)|^2 \, d\xi_h d\xi_3\bigg)^\frac12 
$$
where~$\widehat f = {\mathcal F}f$ is the Fourier transform of~$f$. Note that~$\dot H^{s,\s}$
  is a Hilbert space as soon as~$s<1$ and~$\sigma<1/2$. We define also
  $$
  \|f\|_{\dot H^{s_1,s_2,s_3}}\eqdefa \bigg(\int_{\R^3}  |\xi_1|^{2s_1}  |\xi_2|^{2s_2}  |\xi_3|^{2 s_3} |\widehat f(\xi_1,\xi_2,\xi_3)|^2 \, d\xi_1   d\xi_2 d\xi_3\bigg)^\frac12 .
  $$
This is a Hilbert space if all~$s_j$ are less than~$1/2$.   Finally we shall often use the spaces~$L^p_v\dot H^s_h = L^p(\R_v;\dot H^s(\R^2_h))$. Let us notice that~$L^2_v\dot H^s_h=\dot H^{s,0}$
The following result, proved by D. Iftimie in\ccite{iftimie} is the basis of the proof of Theorem\refer{anisodebile}.
\begin{thm}
{\sl There is a constant~$\e_0$ such that the following result holds.
Let~$(s_i)_{1 \leq i \leq 3}$  be such that~$s_1+s_2+s_3 = 1/2$ and~$-1/2<s_i<1/2$. Then any divergence free vector field of norm smaller than~$\e_0$ in~$\dot H^{s_1,s_2,s_3}$ generates
a global smooth solution to~$(NS)$.
}
\end{thm}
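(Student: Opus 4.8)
The plan is to run the classical Picard fixed-point argument on the mild formulation of~(NS), the only non-standard ingredient being an \emph{anisotropic} product estimate. Denoting by~$\P$ the Leray projector onto divergence free vector fields, a solution is sought as a fixed point of
\[
u\longmapsto e^{t\Delta}u_0-\int_0^t e^{(t-s)\Delta}\,\P\,\dive(u\otimes u)(s)\,ds\eqdefa e^{t\Delta}u_0+B(u,u)(t),
\]
where~$B$ is the associated bilinear map. Abbreviating~$\dot H^{\vec s}\eqdefa\dot H^{s_1,s_2,s_3}$ and, for~$k\in\R$, letting~$\dot H^{\vec s;k}$ be the space normed by~$\bigl\||\xi|^{k}|\xi_1|^{s_1}|\xi_2|^{s_2}|\xi_3|^{s_3}\,\wh f(\xi)\bigr\|_{L^2(\R^3)}$ (one extra \emph{isotropic} derivative weighted by~$|\xi|^{k}$), I would work in
\[
X\eqdefa\Bigl\{u\in\cC_b(\R^+;\dot H^{\vec s})\ :\ \|u\|_X\eqdefa\|u\|_{L^\infty(\R^+;\dot H^{\vec s})}+\|u\|_{L^2(\R^+;\dot H^{\vec s;1})}<\infty\Bigr\}.
\]
By the usual fixed-point lemma it then suffices to prove~(i) the linear bound~$\|e^{t\Delta}u_0\|_X\lesssim\|u_0\|_{\dot H^{\vec s}}$ and~(ii) the bilinear bound~$\|B(u,v)\|_X\lesssim\|u\|_X\|v\|_X$; taking~$\e_0>0$ small enough in terms of the constants in~(i)--(ii) then yields, for~$\|u_0\|_{\dot H^{\vec s}}\le\e_0$, a unique global solution in a ball of~$X$.

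Step~(i) is immediate from Plancherel, since~$e^{-t|\xi|^2}\le1$ and~$\int_0^{+\infty}|\xi|^2e^{-2t|\xi|^2}\,dt=\tfrac12$. For~(ii), I would first note that the Duhamel operator~$\Phi F(t)\eqdefa\int_0^te^{(t-s)\Delta}F(s)\,ds$ is smoothing: in Fourier it gains a factor~$|\xi|^{-1}$ from~$L^2_t$ to~$L^\infty_t$ (Cauchy--Schwarz in~$s$ against~$e^{-(t-s)|\xi|^2}$) and a factor~$|\xi|^{-2}$ from~$L^2_t$ to~$L^2_t$ (Young's inequality in time, the kernel~$e^{-\tau|\xi|^2}\car_{\tau>0}$ having~$L^1_t$-norm~$|\xi|^{-2}$), so that~$\Phi\colon L^2(\R^+;\dot H^{\vec s;-1})\to X$. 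Since~$\P$ is a bounded~$0$-homogeneous multiplier and~$\dive$ costs at most one isotropic derivative, this reduces~(ii) to~$\|u\otimes v\|_{L^2(\R^+;\dot H^{\vec s})}\lesssim\|u\|_X\|v\|_X$. Interpolating the frequency weight gives~$\|w(t)\|_{\dot H^{\vec s;1/2}}\le\|w(t)\|_{\dot H^{\vec s}}^{1/2}\|w(t)\|_{\dot H^{\vec s;1}}^{1/2}$, hence~$\|w\|_{L^4(\R^+;\dot H^{\vec s;1/2})}\le\|w\|_X$, and (ii) boils down, by Cauchy--Schwarz in time, to the purely spatial \emph{anisotropic product law}
\begin{equation}\label{prodlaw}
\|fg\|_{\dot H^{\vec s}(\R^3)}\lesssim\|f\|_{\dot H^{\vec s;1/2}(\R^3)}\,\|g\|_{\dot H^{\vec s;1/2}(\R^3)}.
\end{equation}

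Estimate~\eqref{prodlaw} is the heart of the matter and the step I expect to be the main obstacle. Its one-variable model is~$\|\varphi\psi\|_{\dot H^{a+b-1/2}(\R)}\lesssim\|\varphi\|_{\dot H^{a}(\R)}\|\psi\|_{\dot H^{b}(\R)}$ for~$a,b<\tfrac12$ and~$a+b>0$: the hypothesis~$-\tfrac12<s_i<\tfrac12$ is precisely what makes such one-dimensional laws (and the companion Sobolev embeddings~$\dot H^a(\R)\hookrightarrow L^{2/(1-2a)}(\R)$) available in each of the three variables, while~$s_1+s_2+s_3=\tfrac12$ is the scaling identity ensuring that both sides of~\eqref{prodlaw} carry total regularity~$\tfrac12$. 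The delicate point is that one cannot merely tensorise the one-dimensional law, since a direct iteration hits the forbidden endpoint~$\dot H^{1/2}(\R)$; the ``extra half-derivative'' of~$f$ and that of~$g$ must be distributed among the three directions. I would therefore use a Littlewood--Paley decomposition carried out simultaneously in~$x_1$,~$x_2$ and~$x_3$, together with a Bony-type splitting of~$fg$ which, in each direction separately, places~$\Delta^i_{q_i}f\,\Delta^i_{r_i}g$ in paraproduct or in remainder position according to which frequency dominates, and then sum the resulting eightfold series using Hölder in the physical variables and the~$\ell^2$-summability afforded by the constraints on the~$s_i$. (Equivalently, one may simply invoke the anisotropic product estimates established in~\cite{iftimie}.)

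Granting~(i) and~\eqref{prodlaw}, the fixed-point lemma delivers a unique global~$u\in X$, which stays divergence free because~$\P$ preserves that constraint, and which is smooth for~$t>0$ by the standard parabolic bootstrap (away from~$t=0$ one propagates regularity to show~$u(t)\in\dot H^{\vec s;k}$ for every~$k$, hence~$u(t)\in H^\infty$). It is worth noting that the whole argument is linear in spirit, so the result is really a small-data statement in the anisotropic space~$\dot H^{s_1,s_2,s_3}$, in agreement with the remark following Theorem~\ref{anisodebile}.
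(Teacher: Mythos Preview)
The paper does not actually prove this theorem: it is quoted from Iftimie~\cite{iftimie} and used as a black box to obtain Corollary~\ref{dragos} and thence Theorem~\ref{anisodebile}. There is therefore no argument in the present paper to compare yours against.

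Your outline is nonetheless the standard route and is, in substance, how the result is obtained in~\cite{iftimie}: a Picard iteration in the scale-invariant space~$L^\infty(\R^+;\dot H^{s_1,s_2,s_3})$ intersected with the space carrying one extra isotropic derivative in~$L^2$ in time, with the entire difficulty concentrated in the anisotropic product estimate you display. Your diagnosis of the obstruction is correct---naively tensorising the one-dimensional law~$\|\varphi\psi\|_{\dot H^{a+b-1/2}(\R)}\lesssim\|\varphi\|_{\dot H^a(\R)}\|\psi\|_{\dot H^b(\R)}$ runs into the forbidden endpoint~$\dot H^{1/2}(\R)$, and one must instead redistribute the extra half-derivative among the three directions via a directional Littlewood--Paley/Bony decomposition, which is precisely the mechanism employed in~\cite{iftimie}. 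So your proposal is sound, if (as you yourself concede) sketchy at the one genuinely technical step; but since the paper itself simply cites~\cite{iftimie} for this, there is nothing further to compare.
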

This theorem implies obviously the following corollary, since~$\dot H^{s,\frac12-s}$ is continuously embedded in~$\dot H^{\frac s 2,\frac s 2,\frac12-s}$ as soon as~$0<s<1/2$. More precisely, we have that the space~$\dot H^{s,\frac 1 2 -s}$ is the space~$\dot H^{s ,0,\frac12-s}\cap \dot H^{0,s,\frac12-s}$.
\begin{corol}\label{dragos}
{\sl There is a constant~$\e_0$ such that the following result holds.
Let~$s$ be given in~$]0,1/2[$ . Then any divergence free vector field of norm smaller than~$\e_0$ in~$\dot H^{s,\frac12-s}$ generates
a global smooth solution to~{\rm(NS)}.
}
\end{corol}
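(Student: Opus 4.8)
The plan is to deduce the statement directly from Iftimie's theorem quoted just above, by specializing the exponents to $(s_1,s_2,s_3)=(s/2,s/2,1/2-s)$ and observing that the hypothesis needed there is implied by the one made here through an elementary Sobolev embedding. So the whole argument is a verification of admissibility plus a one-line Fourier-side inequality; I do not expect any real obstacle, the only delicate point being the bookkeeping of the constraints on the exponents, which is exactly what forces the range $]0,1/2[$ for $s$.

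First I would check that the triple $(s/2,s/2,1/2-s)$ is admissible in the sense of the preceding theorem. One has $s_1+s_2+s_3 = s/2+s/2+1/2-s = 1/2$, and since $0<s<1/2$ both $s/2$ and $1/2-s$ lie in the open interval $]-1/2,1/2[$ (indeed $0<s/2<1/4$ and $0<1/2-s<1/2$). Hence Iftimie's theorem applies: any divergence free vector field whose $\dot H^{s/2,s/2,1/2-s}$ norm is below the universal constant $\e_0$ of that statement generates a global smooth solution of $(NS)$.

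Next I would establish the continuous embedding $\dot H^{s,\frac12-s}\hookrightarrow \dot H^{\frac s2,\frac s2,\frac12-s}$, in fact with norm at most $1$ (even at most $2^{-s/2}$). This is purely a pointwise inequality on the Fourier side: for $s>0$ one has $|\xi_1|\le |\xi_h|$ and $|\xi_2|\le|\xi_h|$, so that $|\xi_1|^{s}|\xi_2|^{s}\le |\xi_h|^{2s}$; one may even gain the constant $2^{-s}$ via the arithmetic--geometric inequality $|\xi_1|^s|\xi_2|^s\le 2^{-s}(|\xi_1|^2+|\xi_2|^2)^s = 2^{-s}|\xi_h|^{2s}$. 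Multiplying by $|\xi_3|^{1-2s}|\widehat f(\xi)|^2$ and integrating over $\R^3$ gives $\|f\|_{\dot H^{s/2,s/2,1/2-s}}\le \|f\|_{\dot H^{s,1/2-s}}$. (The sharper identification $\dot H^{s,1/2-s}=\dot H^{s,0,1/2-s}\cap\dot H^{0,s,1/2-s}$ announced in the text follows along the same lines from the elementary equivalence $|\xi_h|^{2s}\sim|\xi_1|^{2s}+|\xi_2|^{2s}$, but is not needed here.)

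Combining the two steps finishes the proof: if $u_0$ is divergence free with $\|u_0\|_{\dot H^{s,1/2-s}}<\e_0$, then $\|u_0\|_{\dot H^{s/2,s/2,1/2-s}}\le\|u_0\|_{\dot H^{s,1/2-s}}<\e_0$, and Iftimie's theorem yields a global smooth solution of $(NS)$, with the same constant $\e_0$.
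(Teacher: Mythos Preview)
Your proof is correct and follows exactly the same route as the paper: the corollary is stated as an immediate consequence of Iftimie's theorem via the continuous embedding $\dot H^{s,\frac12-s}\hookrightarrow \dot H^{\frac s2,\frac s2,\frac12-s}$ for $0<s<1/2$, and you have supplied precisely that verification (including the side remark on $\dot H^{s,0,\frac12-s}\cap\dot H^{0,s,\frac12-s}$).
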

\begin{proof}[Proof of Theorem~\ref{anisodebile}]
 Let us decompose~$u_0$ into two parts, namely we write~$u_0 = v_0+w_0$, with
 $$
 v_0\eqdefa {\mathcal F}^{-1} \big ({\mathbf 1}_{|\xi_h  | \leq  \eps |\xi_3|}\widehat u_0 (\xi) \big) \quad \mbox{and} \quad w_0\eqdefa {\mathcal F}^{-1} \big({\mathbf 1}_{|\xi_3  | \leq \eps |\xi_h|}\widehat u_0 (\xi) \big)\, .
 $$ 
 Let~$0< s < 1/2$ be given. On the one hand we have
 $$
 \|v_0\|_{\dot H^{s,\frac 1 2-s}}^2 = \int_{|\xi_3  | \leq \eps |\xi_h|} |\xi_h|^{2s} |\xi_3|^{1-2s}| \widehat 
  u_0 (\xi)|^2 \, d\xi
  $$
hence since~$ s < 1/2$, 
$$
\begin{aligned}
\|v_0\|_{\dot H^{s,\frac 1 2-s}}^2 & \leq  \eps^{1-2s}  \int  |\xi_h| \,  | \widehat 
  u_0 (\xi)|^2 \, d\xi\\
  & \leq \eps^{1-2s}  \|u_0\|_{\dot H^\frac12}^2 \, .
\end{aligned}
$$
Identical computations give, since~$s>0$,
$$
\begin{aligned}
\|w_0\|_{\dot H^{s,\frac 1 2-s}}^2 & =    \int_{|\xi_h | \leq \eps |\xi_3|}  |\xi_h|^{2s} |\xi_3|^{1-2s} | \widehat 
  u_0 (\xi)|^2 \, d\xi\\
 & \leq   \eps ^{2s}    \int   |\xi_3| \, | \widehat 
  u_0 (\xi)|^2 \, d\xi\\
   & \leq  \eps^{2s}  \|u_0\|_{\dot H^\frac12}^2 \,.
\end{aligned}
$$
 To conclude we can 
choose~$s = 1/4$, which gives
$$
\|u_0\|_{\dot H^{\frac 1 4,\frac 1 4}} \leq \eps^{\frac14} \|u_0\|_{\dot H^\frac12}.
$$
Then, the result follows by the wellposedness of~$(NS)$ in~$\dot H^{\frac14,\frac14}$ given by Corollary~\ref{dragos}.  
\end{proof}
\begin{rmk}
{\rm   The proof of Theorem~{\rm\ref{anisodebile}} does not use the special structure of the nonlinear term in~$(NS)$ as it reduces to checking that the initial data is small in an adequate scale-invariant space. 
}
  \end{rmk}

 \section{Proof of Theorem~\ref{spectral}}\label{spectralproof}
In this section we shall prove the second part of Theorem~\ref{spectral}: we consider an initial data~$u_0 + v_{\e,0}$ satisfying the assumptions of the theorem and we prove that for~$\e>0$ small enough, it generates a global, unique solution to (NS). 
It will be clear from the proof that in the case when~$u_0 \equiv 0$ (which amounts to the first part of Theorem~\ref{spectral}), the assumption that~$v_{\e,0}(x_h,0)$ is small in~$L^2(\R^2)$ is not necessary. Thus the proof of the whole of Theorem~\ref{spectral} will be obtained.

\subsection{Decomposition of the initial data}
The first step of the proof consists in decomposing the initial data as follows.
\begin{prop}
{\sl   Let~$v_{\e,0}$ be a divergence free vector field satisfying
$$
\forall \eps \in ]0,1[\, , \quad  \forall \xi \in \mbox{\rm Supp} \, \widehat v_{\e,0} \, , \quad   
 |\xi_3| \leq  \eps |\xi_h| \, .
$$
Then there exist two divergence free vector fields~$\big(\overline v^h_{\eps,0},0\big)$ and~$w_{\eps,0}$ the spectrum of which is included in that of~$v_{\e,0}$, and such that
$$
v_{\eps,0}= (\overline v^h_{\eps,0},0) + w_{\eps,0} 
\quad\hbox{with} \quad\big|\widehat w_{\eps,0}^h \big | \leq  \eps \big|\widehat w_{\eps,0}^3 \big | \, .
$$
}
\end{prop}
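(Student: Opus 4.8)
The idea is to split $v_{\e,0}$ according to its vertical velocity component. Write $v_{\e,0}=(v_{\e,0}^h,v_{\e,0}^3)$ and, since $v_{\e,0}$ is divergence free, $\dive_h v_{\e,0}^h = -\partial_3 v_{\e,0}^3$. On the Fourier side this reads $i\xi_h\cdot\widehat v_{\e,0}^h(\xi) = -i\xi_3\,\widehat v_{\e,0}^3(\xi)$, which already shows that on the spectrum of $v_{\e,0}$, where $|\xi_3|\leq\e|\xi_h|$, the horizontal part $\widehat v_{\e,0}^h$ has a component of size at most $\e$ times $|\widehat v_{\e,0}^3|$ in the direction $\xi_h/|\xi_h|$. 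The part of $\widehat v_{\e,0}^h$ orthogonal to $\xi_h$ is exactly the ``purely two-dimensional'' divergence free piece, which we will put into $(\overline v^h_{\e,0},0)$.

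**Construction.** Concretely, I would set
$$
\widehat{\overline v}^h_{\e,0}(\xi) \eqdefa \widehat v_{\e,0}^h(\xi) - \frac{\xi_h}{|\xi_h|^2}\,\bigl(\xi_h\cdot\widehat v_{\e,0}^h(\xi)\bigr)
= \Bigl(\Id - \frac{\xi_h\otimes\xi_h}{|\xi_h|^2}\Bigr)\widehat v_{\e,0}^h(\xi),
$$
i.e. $\overline v^h_{\e,0}$ is the horizontal Leray projection (in the horizontal variables only) of $v_{\e,0}^h$, and then
$$
w_{\e,0} \eqdefa v_{\e,0} - (\overline v^h_{\e,0},0) = \Bigl(\tfrac{\xi_h\otimes\xi_h}{|\xi_h|^2}\widehat v_{\e,0}^h,\ \widehat v_{\e,0}^3\Bigr)^{\vee}.
$$
Both fields manifestly have spectrum contained in that of $v_{\e,0}$ (the Fourier multipliers $\xi_h\otimes\xi_h/|\xi_h|^2$ are homogeneous of degree $0$ and smooth away from $\xi_h=0$, and one checks that $v_{\e,0}$ has no frequencies with $\xi_h=0$ because there $|\xi_3|\le\e|\xi_h|=0$ forces $\xi=0$). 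By construction $\dive_h \overline v^h_{\e,0}=0$, so $(\overline v^h_{\e,0},0)$ is divergence free in $\R^3$; and $w_{\e,0}=v_{\e,0}-(\overline v^h_{\e,0},0)$ is then divergence free as a difference of divergence free fields. Finally, since $\widehat w_{\e,0}^h = (\xi_h/|\xi_h|^2)(\xi_h\cdot\widehat v_{\e,0}^h) = -(\xi_h/|\xi_h|^2)\,\xi_3\,\widehat v_{\e,0}^3 = -(\xi_h/|\xi_h|^2)\,\xi_3\,\widehat w_{\e,0}^3$, using the divergence free condition and $\widehat w_{\e,0}^3=\widehat v_{\e,0}^3$, we get
$$
\bigl|\widehat w_{\e,0}^h(\xi)\bigr| = \frac{|\xi_3|}{|\xi_h|}\,\bigl|\widehat w_{\e,0}^3(\xi)\bigr| \leq \e\,\bigl|\widehat w_{\e,0}^3(\xi)\bigr|
$$
on the spectrum, which is precisely the claimed inequality.

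**Main point to be careful about.** There is essentially no hard analysis here; the whole content is the algebraic identity coming from $\dive v_{\e,0}=0$ together with the spectral localization $|\xi_3|\le\e|\xi_h|$. The only thing that requires a line of justification is that the operations defining $\overline v^h_{\e,0}$ and $w_{\e,0}$ do not enlarge the spectrum and are well-defined — this is where one uses that $\xi_h=0$ is excluded from $\mathrm{Supp}\,\widehat v_{\e,0}$, so that dividing by $|\xi_h|^2$ causes no trouble, and that multiplication by a frequency-supported multiplier only restricts the support. I would state this explicitly and then the proof is complete.
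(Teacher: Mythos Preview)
Your proof is correct and is essentially identical to the paper's: both define $\overline v^h_{\e,0}$ as the horizontal Leray projection $\P_h v^h_{\e,0}$, set $w_{\e,0}=v_{\e,0}-(\overline v^h_{\e,0},0)$, and derive the inequality $|\widehat w^h_{\e,0}|\le\e|\widehat w^3_{\e,0}|$ from the divergence-free identity $\xi_h\cdot\widehat v^h_{\e,0}=-\xi_3\widehat v^3_{\e,0}$ together with the spectral assumption. Your write-up is in fact slightly more explicit about why the construction preserves the spectrum and why $\xi_h=0$ is not in the support, points the paper leaves implicit.
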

\begin{proof}
Let~$\P_h \eqdefa \Id - \nabla_h \Delta_h^{-1} \mbox{div}_h$ be the Leray projector onto horizontal divergence free vector fields and    define
\begin{equation}\label{defoverlinevandw}
\overline v^h_{\eps,0} \eqdefa\P_h v^h_{\eps,0} \quad \mbox{and} \quad  w_{\eps,0} \eqdefa v_{\eps,0} - (\overline v^h_{\eps,0},0)\, .
\end{equation}
The estimate on~$ w_{\eps,0} $ simply comes from the fact that obviously
$$
\widehat w^h_{\eps,0} (\xi) = \frac{\xi_h \cdot \widehat  v^h_{\eps,0}}{|\xi_h|^2} \, \xi_h \, , 
$$
and therefore since~$v_{\eps,0}$ is divergence free and using the spectral assumption we find
$$
| \widehat w^h_{\eps,0} (\xi) | \leq \frac{| \xi_h \cdot  \widehat v^h_{\eps,0}|}{|\xi_h|} =  \frac{|\xi_3 \widehat v^3_{\eps,0}|}{|\xi_h|}   \leq    \eps  | \widehat v^3_{\eps,0} | =   \eps  | \widehat w^3_{\eps,0} (\xi) | \, .
$$
That proves the proposition.
\end{proof}

\subsection{Construction of an approximate solution  and end of the proof of   Theorem~\ref{spectral}} 
The construction of the approximate solution follows closely the ideas of~\cite{cg3}-\cite{cgz}. We write indeed
$$
v_{\e}^{app} \eqdefa(\overline v_\eps^h,0) + w_\eps \quad \mbox{and} \quad  u_{\e}^{app} \eqdefa u +v_{\e}^{app} \, , 
$$
where~$u$ is the global unique solution associated with~$u_0$ and~$\overline v_\eps^h$ solves the two dimensional Navier-Stokes equations for each given~$x_3$:
\[
{\rm(NS2D)}_{x_3}\ \left\{
\begin{array}{l}
\partial_{t} \overline v_\eps^h  +\overline v_\eps^h\cdot\nabla^h \overline v_\eps^h-\Delta_h \overline v_\eps^h=-\nabla^h \overline p_\eps \quad \mbox{in} \quad \R^+ \times \R^2\\
\dive_h \overline v_\eps^h =0\\
\overline v^h_{\eps|t=0}=\overline v^h_{\eps,0}(\cdot,x_3) \, ,
\end{array}
\right.
\]
while~$w_\eps$ solves the linear transport-diffusion type equation
\[
{\rm(T)}\ \left\{
\begin{array}{l}
\partial_{t} w_\eps +\overline v_\eps^h\cdot\nabla^h w_\eps-\Delta w_\eps =-\nabla q_\eps \quad \mbox{in} \quad \R^+ \times \R^3\\
\dive w_\eps  =0\\
  w_{\eps|t=0}=w_{\eps,0} \, .
\end{array}
\right.
\]
Those vector fields satisfy the following bounds (see Paragraph~\ref{estimates} for a proof).
\begin{lemma}\label{bounds}
{\sl Under the assumptions of Theorem~{\rm\ref{spectral}}, the family~$  u_{\e}^{app}$ is uniformly bounded in~$L^2(\R^+;L^\infty(\R^3))$, and~$\nabla  u_{\e}^{app}$ is uniformly bounded in~$L^2(\R^+;L^\infty_vL^2_h)$.
}
\end{lemma}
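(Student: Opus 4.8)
The strategy is to bound each of the three pieces of $u_\e^{app}=u+(\overline v_\e^h,0)+w_\e$ separately in the spaces $L^2(\R^+;L^\infty(\R^3))$ and (for the gradient) $L^2(\R^+;L^\infty_vL^2_h)$. The term $u$ is the global solution associated with $u_0\in\cG$: by definition of $\cG$ it is smooth and global, and one knows a posteriori that it and its gradient lie in all the relevant scale-invariant spaces; in particular $u\in L^2(\R^+;L^\infty)$ and $\nabla u\in L^2(\R^+;L^\infty_vL^2_h)$ follow from the $\dot H^{1/2}$ persistence and the $L^4(\R^+;\dot H^1)$ bound together with anisotropic Sobolev embedding (writing $\|f\|_{L^\infty_h}\lesssim\|f\|_{\dot H^{s}(\R^2_h)}^{1/2}\|\nabla_h f\|_{\dot H^{s}(\R^2_h)}^{1/2}$ type inequalities in the horizontal variable and $L^\infty_v\hookleftarrow H^1_v$ in the vertical one). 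So the $u$-contribution is a fixed (finite) quantity depending only on $u_0$.

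For the two-dimensional piece $\overline v_\e^h$, the point is that $(\mathrm{NS2D})_{x_3}$ is globally well posed for each frozen $x_3$, with energy and enstrophy estimates that are \emph{uniform in $x_3$} once one controls the initial data $\overline v_{\e,0}^h(\cdot,x_3)=\P_h v_{\e,0}^h(\cdot,x_3)$ in $\dot H^s(\R^2)$ for suitable $s$ (e.g.\ $s\in\{0,1\}$ or $s=1/2$). The hypothesis that $v_{\e,0}$ is uniformly bounded in $L^\infty(\R;\dot H^s(\R^2))$ for all $s\ge-1$ gives exactly this; moreover $\sqrt\e\, v_{\e,0}$ bounded in $L^2(\R_{x_3};\dot H^s)$ will be used to control $L^2_v$-type norms. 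From the two-dimensional parabolic smoothing one gets $\overline v_\e^h\in L^2(\R^+;\dot H^2(\R^2))$ for each $x_3$, uniformly, hence $\overline v_\e^h$ and $\nabla^h\overline v_\e^h$ are in $L^2(\R^+;L^\infty_h)$ for each $x_3$ by 2D Sobolev embedding; taking the sup over $x_3$ (using the uniform-in-$x_3$ bounds) handles the horizontal derivatives, and $\partial_3\overline v_\e^h$ is controlled by differentiating $(\mathrm{NS2D})_{x_3}$ in $x_3$ — this again closes with uniform 2D estimates on $\partial_3\overline v_{\e,0}^h$, which are available since all derivatives of $v_{\e,0}$ are uniformly bounded in the $\dot H^s(\R^2)$ scale.

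For the transport-diffusion piece $w_\e$ solving $(\mathrm T)$, the key structural fact is that $w_{\e,0}$ satisfies $|\widehat w_{\e,0}^h|\le\e|\widehat w_{\e,0}^3|$, i.e.\ it is strongly anisotropic, and that $(\mathrm T)$ is \emph{linear} with a drift $\overline v_\e^h$ whose bounds we have just established. Here I would invoke the propagation result for linear transport-diffusion equations announced for Section~\ref{spectralproof} (the one "we admit in the preceding section" and prove in the last section): applied with the drift $\overline v_\e^h$, it propagates anisotropic regularity of $w_{\e,0}$, yielding that $w_\e$ and $\nabla w_\e$ are small — of order a power of $\e$ — in $L^2(\R^+;L^\infty)$ and $L^2(\R^+;L^\infty_vL^2_h)$ respectively. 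Concretely one estimates $w_\e$ in an anisotropic space such as $L^2(\R^+;\dot H^{s,\sigma})$ with $s>1$, $\sigma>1/2$ so that the embedding into $L^\infty(\R^3)$ holds, exploits the gain of a factor $\e^{1/2}$ or $\e$ coming from the spectral constraint $|\xi_3|\le\e|\xi_h|$ every time a vertical derivative is traded for a horizontal one, and absorbs the growth of the drift norms (which are independent of $\e$) against these powers of $\e$.

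The main obstacle is the $w_\e$ estimate: one must carry out the anisotropic energy estimates on $(\mathrm T)$ so that the drift term $\overline v_\e^h\cdot\nabla^h w_\e$ is handled without losing the smallness in $\e$ — the commutator/product estimates in anisotropic Sobolev spaces between $\overline v_\e^h$ (horizontally smooth but only $x_3$-dependent as a parameter) and $w_\e$ are delicate, and the whole point of deferring this to the final section is precisely to do it by hand rather than through anisotropic paradifferential calculus. Once that propagation lemma is granted, assembling the three bounds above is routine; note in particular that for the first part of the theorem ($u\equiv0$) no smallness of $v_{\e,0}(\cdot,0)$ in $L^2(\R^2_h)$ is needed, since that hypothesis only enters when one must control cross terms between $u$ and $v_\e^{app}$ at the level of the full nonlinear analysis, not in this linear bound.
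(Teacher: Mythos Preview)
Your high-level structure matches the paper's: decompose $u_\e^{app}=u+(\overline v_\e^h,0)+w_\e$, cite a known result for $u$ (the paper refers to \cite[Appendix B]{cgz}), control $\overline v_\e^h$ via uniform-in-$x_3$ two-dimensional Navier--Stokes estimates (Proposition~\ref{estimatesoverlinev}, imported from \cite{cg3}), invoke the propagation result for $w_\e$ (Proposition~\ref{estimatesw}, proved in Section~\ref{appendix}), and finish with Gagliardo--Nirenberg and Sobolev embeddings.

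Your heuristic for the $w_\e$ part, however, misidentifies the mechanism. The spectral constraint $|\xi_3|\le\e|\xi_h|$ is a hypothesis on the \emph{initial data} only; transport by $\overline v_\e^h$ and the pressure $q_\e$ destroy this Fourier-support condition for $t>0$, so one cannot ``trade a vertical derivative for a horizontal one via the spectral constraint'' along the flow. The paper's actual device is a rescaling: setting $W_\e(t,x)\eqdefa(\e^{-1}w_\e^h,w_\e^3)(t,x_h,\e^{-1}x_3)$ turns the anisotropic initial data into a family uniformly bounded in $\dot H^s(\R^3)$ for $s\ge-1$, and one then runs energy estimates on the rescaled equation in $L^2_v\dot H^s_h$ (closing via Gronwall, with an induction on derivatives for higher regularity). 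Undoing the scaling recovers the $\e$-dependence. Note also that not all of $w_\e$ is small: only $w_\e^h$ gains a factor of $\e$, while $w_\e^3$ is merely uniformly bounded (see Proposition~\ref{estimatesw}). This is harmless for Lemma~\ref{bounds}, which only asserts uniform boundedness, but your claim that ``$w_\e$ and $\nabla w_\e$ are small --- of order a power of $\e$'' overstates what is true, and the distinction becomes essential later when estimating $G_\e$ and $H_\e$.
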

Now   define~$u_\eps$ the solution associated with the initial data~$u_0 + v_{\eps,0}$, which a priori has a finite life span, depending on~$\eps$. Consider
$$
R_\eps \eqdefa u_\eps -  u_{\e}^{app} \, ,
$$
which satisfies the following property (see Paragraph~\ref{remainder} for a proof).
\begin{lemma}\label{lemmaquasi2D2}
 {\sl For any positive~$\delta$ there exists~$\eps(\d)$ and~$c(\d)$ such that if
 $$\eps\leq \eps(\d)\quad\hbox{and if}\quad \|v_{\eps,0}(\cdot,0)\|_{L^2_h} \leq c(\d),
 $$
  then the vector field~$R^{\e} \eqdefa u_\eps -  u_{\e}^{app}$ satisfies the equation
$$
{\rm(E_\e)} \left\{
\begin{array}{l}
\partial_t R_\eps+ R_\eps \cdot \nabla R_\eps -\Delta R_\eps+\uapp \cdot \nabla R_\e+R_\e \cdot \nabla \uapp =
 F_\e-\nabla \widetilde q_\eps\\
\dive R_\e = 0\\
R_{\eps|t=0} = 0
\end{array}
\right.
$$ 
with $\|F_\e\|_{L^2(\R^+;\dot H^{-\frac 1 2}(\R^3))} \leq \delta$.
}
\end{lemma}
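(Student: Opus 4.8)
The strategy is to compute directly the equation satisfied by $R_\eps = u_\eps - \uapp$, identify the source term $F_\eps$ that measures the failure of $\uapp$ to solve $(NS)$, and then estimate $F_\eps$ in $L^2(\R^+;\dot H^{-1/2})$ term by term, showing that each contribution is small either because of the $\eps$-anisotropy or because of the smallness of $\|v_{\eps,0}(\cdot,0)\|_{L^2_h}$. First I would write $u_\eps = \uapp + R_\eps$ and plug this into $(NS)$; since $u_\eps$ solves the full system and $\uapp = u + (\overline v_\eps^h,0) + w_\eps$, where $u$ solves $(NS)$, $\overline v_\eps^h$ solves $(NS2D)_{x_3}$ and $w_\eps$ solves $(T)$, most terms cancel and one is left with $(E_\eps)$ with a pressure gradient and a forcing
$$
F_\eps \eqdefa -\bigl(\partial_3^2 \overline v_\eps^h,0\bigr) + \text{(bilinear cross terms among } u,\ (\overline v_\eps^h,0),\ w_\eps\text{ that are not already present in the individual equations)}.
$$
More precisely, the equation for $(\overline v_\eps^h,0)$ as a vector field on $\R^3$ reads $\partial_t(\overline v_\eps^h,0) + \overline v_\eps^h\cdot\nabla^h(\overline v_\eps^h,0) - \Delta(\overline v_\eps^h,0) = -\nabla^h\overline p_\eps + (\partial_3^2\overline v_\eps^h,0)$, so the vertical-Laplacian term $(\partial_3^2\overline v_\eps^h,0)$ must be moved to the right-hand side, and likewise the nonlinear interaction terms such as $u\cdot\nabla(\overline v_\eps^h,0)+(\overline v_\eps^h,0)\cdot\nabla u$, $u\cdot\nabla w_\eps + w_\eps\cdot\nabla u$, $(\overline v_\eps^h,0)\cdot\nabla w_\eps - \overline v_\eps^h\cdot\nabla^h w_\eps$ and $w_\eps\cdot\nabla\uapp$ that do not appear in $(NS2D)_{x_3}$ or $(T)$ end up in $F_\eps$.

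The heart of the matter is then the bound $\|F_\eps\|_{L^2(\R^+;\dot H^{-1/2})}\le\delta$. For the linear term $(\partial_3^2\overline v_\eps^h,0)$: since the spectrum of $\overline v_\eps^h$ sits in $\{|\xi_3|\le\eps|\xi_h|\}$, each vertical derivative costs a factor $\eps$ relative to a horizontal derivative, so $\|\partial_3^2\overline v_\eps^h\|_{\dot H^{-1/2}}\lesssim \eps^2\|\overline v_\eps^h\|_{\dot H^{3/2,0}}$ roughly, and the $(NS2D)$ energy/propagation estimates (using the uniform $L^\infty(\R;\dot H^s)$ bounds on $v_{\eps,0}$, hence on $\overline v_\eps^h$, together with $\sqrt\eps\,v_{\eps,0}$ bounded in $L^2(\R_{x_3};\dot H^s)$) control the time integral of the needed norm, with the $\sqrt\eps$ weight absorbing the loss from integrating in $x_3$; all told this piece is $O(\eps)$ or $O(\eps^{1/2})$. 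For the mixed bilinear terms involving $w_\eps$, one uses that $\widehat w_\eps$ has $|\widehat w_\eps^h|\le\eps|\widehat w_\eps^3|$ — so $w_\eps^h$ is small — together with the divergence-free condition $\partial_3 w_\eps^3 = -\dive_h w_\eps^h$, which is again small by the same anisotropy, to gain powers of $\eps$ in the product estimates (done via anisotropic Hölder inequalities $L^2_vL^\infty_h \times L^\infty_vL^2_h$ and the like, plus the parabolic smoothing giving the time integrability). For the terms coupling $u$ with the profile part, one exploits that $u\in L^4(\R^+;\dot H^1)\cap C_b(\R^+;\dot H^{1/2})$ (global solution), so these are products of a fixed function with an $\eps$-small or $x_3$-localized factor. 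The one term that is \emph{not} made small by $\eps$ alone is the contribution carrying the value of the profile at $x_3=0$ — this is the analogue of the ``$v_0^h(\cdot,0)=0$'' hypothesis in Theorem~\ref{slowvar} — and here it is instead controlled by the hypothesis $\|v_{\eps,0}(\cdot,0)\|_{L^2_h}\le c(\delta)$; after fixing $\delta$, one first chooses $c(\delta)$ small to handle that term and then chooses $\eps(\delta)$ small to handle all the genuinely $\eps$-small terms.

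The main obstacle I expect is the bookkeeping and the precise anisotropic product estimates for the $w_\eps$-interaction terms: one must be careful that in a product like $w_\eps\cdot\nabla R_\eps$ (kept on the left) versus $w_\eps\cdot\nabla(\overline v_\eps^h,0)$ (pushed into $F_\eps$) the right factor is distributed so that each piece of $F_\eps$ has the form (small-in-$\eps$ factor)$\times$(bounded-in-suitable-anisotropic-norm factor), and that the $\dot H^{-1/2}(\R^3)$ norm — rather than an anisotropic norm — is what comes out, which forces a judicious choice of which variable carries the $L^2$ and which carries the $L^\infty$. This is also where Lemma~\ref{bounds} is used, since the approximate solution's $L^2(\R^+;L^\infty)$ and $\nabla\uapp\in L^2(\R^+;L^\infty_vL^2_h)$ bounds are exactly the quantities that close the product estimates; and the propagation result for the transport-diffusion equation $(T)$ admitted in this section (and proved in the last section) is what supplies the needed norms of $w_\eps$.
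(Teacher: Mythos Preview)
Your overall strategy --- derive $(E_\eps)$ by subtraction and estimate $F_\eps$ piecewise, separating the terms made small purely by $\eps$ from the one term requiring $\|v_{\eps,0}(\cdot,0)\|_{L^2_h}\le c(\delta)$ --- is exactly the paper's strategy. Two points, however, need correcting.

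First, a bookkeeping omission: the forcing has a nonzero third component $\partial_3\overline p_\eps$, since the two-dimensional pressure depends on the parameter $x_3$; the paper's $F_\eps$ is
\[
F_\eps=(\partial_3^2\overline v_\eps^h,\ \partial_3\overline p_\eps)+w_\eps\cdot\nabla v_\eps^{app}+u\cdot\nabla v_\eps^{app}+v_\eps^{app}\cdot\nabla u,
\]
and $\partial_3\overline p_\eps$ must be estimated on its own (via $-\Delta_h\overline p_\eps=\sum_{j,k}\partial_j\partial_k(\overline v_\eps^j\overline v_\eps^k)$ and one $\partial_3$ landing on a factor). Also, your sentence ``since the spectrum of $\overline v_\eps^h$ sits in $\{|\xi_3|\le\eps|\xi_h|\}$'' is false for $t>0$: the cone $\{|\xi_3|\le\eps|\xi_h|\}$ is \emph{not} stable under Minkowski sum, so the quadratic term destroys the spectral localisation. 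The gain of $\eps$ per vertical derivative on $\overline v_\eps^h$ (and likewise the smallness of $w_\eps^h$) is a genuine propagation statement (Propositions~\ref{estimatesoverlinev} and~\ref{estimatesw}), not a spectral triviality; you do invoke propagation afterwards, but the argument as written is misleading.

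Second, and this is the real gap: your treatment of $H_\eps\eqdefa u\cdot\nabla v_\eps^{app}+v_\eps^{app}\cdot\nabla u$ does not close. These are \emph{not} ``a fixed function times an $\eps$-small or $x_3$-localised factor'': $\overline v_\eps^h$ and $\nabla^h\overline v_\eps^h$ are $O(1)$ in all the relevant anisotropic norms, and $u$ carries no smallness whatsoever. The paper's mechanism is the product inequality (Lemma~3.4 of~\cite{cgz})
\[
\|ab\|_{\dot H^{-\frac12}}\lesssim\|a\|_{L^2_v\dot H^{\frac12}_h}\,\|b(\cdot,0)\|_{L^2_h}+\|x_3\,a\|_{L^2}\,\|\partial_3 b\|_{L^\infty_v\dot H^{\frac12}_h},
\]
applied with $b$ built from $v_\eps^{app}$: the first piece is handled by the hypothesis on $\|v_{\eps,0}(\cdot,0)\|_{L^2_h}$ (after propagating this smallness to $v_\eps^{app}(t,\cdot,0)$, which is itself nontrivial --- Proposition~\ref{estimateat0}), and the second piece by the $\eps$-smallness of $\partial_3 v_\eps^{app}$. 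But the second piece needs $\|x_3\,a\|_{L^2}<\infty$ with $a$ built from $u$, which fails for a generic $\dot H^{\frac12}$ solution. The paper therefore first approximates $u$ in $L^\infty(\R^+;\dot H^{\frac12})$ by a step-in-time function $\widetilde u_\eta$ valued in smooth compactly supported divergence-free fields (possible because $\|u(t)\|_{\dot H^{1/2}}\to0$ as $t\to\infty$), controls the error $u-\widetilde u_\eta$ by a different product estimate, and only then applies the display above with $a$ built from $\widetilde u_\eta$. Your outline contains no substitute for this approximation step, and without it the $H_\eps$ estimate does not go through.
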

Assuming those two lemmas to be true, the end of the  proof of Theorem~\ref{spectral} follows very easily using the method given  in~\cite[Section 2]{cg3}: an energy estimate in~$\dot H^\frac12(\R^3)$ on~$({\rm E}_\eps)$, using the fact that the forcing term is as small as needed and that the initial data is zero, gives that~$R_\eps$ is unique, and uniformly bounded   in~$L^\infty(\R^+;\dot H^\frac12) \cap L^2(\R^+;\dot H^\frac32)$. Since the approximate solution  is also unique and globally defined,  Theorem~\ref{spectral} is proved. \qed

\subsection{Proof of the estimates on the approximate solution (Lemma~\ref{bounds})}\label{estimates}
As noted in~\cite[Appendix B]{cgz}, the global solution~$u $ associated with~$u_0 \in \dot H^\frac12$ belongs to~$L^2(\R^+;L^\infty(\R^3))$, and~$\nabla  u $ belongs to~$L^2(\R^+;L^\infty_vL^2_h)$.
So we just need to study~$v_{\eps}^{app}$, which we shall do in two steps: first~$\overline v_\eps^h $, then~$w_\eps$.

\subsubsection{Estimates on $\overline v_\eps^h$}
Due to the spectral assumption on~$\overline v^h_{\eps,0}$, it is easy to see that
$$
\begin{aligned}
\forall \alpha = (\alpha_h,\alpha_3) \in \N^2 \times \N \, , \quad \eps^{\frac12-\alpha_3} \partial^\alpha \overline v^h_{\eps,0} \quad \mbox{is uniformly bounded in} \,  \, L^2_v \dot H^s_h \, , \\
\mbox{and} \quad  \eps^{ -\alpha_3} \partial^\alpha \overline v^h_{\eps,0} \quad \mbox{is uniformly bounded in} \,  \,  L^\infty_v \dot H^s_h \, .
\end{aligned}
$$
Indeed the definition of~$\overline v^h_{\eps,0}$ given in~(\ref{defoverlinevandw}), and   the  spectral assumption  as well as the a priori bounds on~$  v_{\eps,0}$, give directly the first result. To     prove  the second result one uses first the Gagliardo-Nirenberg inequality:
$$
\|\partial^\alpha \overline v^h_{\eps,0} \|_{L^\infty_v \dot H^s_h }^2 \leq \|\partial^\alpha \overline v^h_{\eps,0} \|_{L^2_v \dot H^s_h } \|\partial_3\partial^\alpha \overline v^h_{\eps,0} \|_{L^2_v \dot H^s_h } \, ,
$$
and then the same arguments.
The proof of~\cite[Lemma 3.1 and Corollary 3.1]{cg3} enables us to infer from those bounds the following result.
\begin{prop}
\label{estimatesoverlinev}
{\sl Under the assumptions of Theorem~{\rm\ref{spectral}},
 for all real numbers~$s >-1$ and all~$ \alpha = (\alpha_h,\alpha_3) \in \N^2 \times \N $ 
 there is a constant~$\overline C$ such that the   vector field~$\overline v_\eps^h$ satisfies the following bounds:
$$
\begin{aligned}
\|\partial^\alpha \overline v^h_{\eps}(t) \|_{L^\infty_v \dot H^s_h }^2& + \sup_{x_3 \in \R} \int_0^t 
\|\partial^\alpha \nabla^h \overline v^h_{\eps}(t') \|_{ \dot H^s_h }^2  \, dt' \\
&\qquad + \eps \Big( \|\partial^\alpha \nabla^h \overline v^h_{\eps}(t) \|_{L^2_v \dot H^s_h }^2 + \int_0^t  \|\partial^\alpha \overline v^h_{\eps}(t') \|^2_{ L^2(\R^3)} \, dt' \Big)
 \leq \overline C \,   \eps^{2 \alpha_3} \, .
\end{aligned}
$$
}
\end{prop}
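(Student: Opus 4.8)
The key structural fact is that for each fixed value of $x_3$ the system ${\rm(NS2D)}_{x_3}$ is exactly the two–dimensional Navier–Stokes system, so that $\overline v_\eps^h(t,\cdot,x_3)$ is simply the 2D solution issued from the data $\overline v_{\eps,0}^h(\cdot,x_3)$, distinct horizontal slices never interacting. The plan is therefore to establish, \emph{slice by slice}, a family of two–dimensional a priori estimates on $\partial^\alpha\overline v_\eps^h(\cdot,x_3)$ in the homogeneous spaces $\dot H^s_h$ for every $s\ge -1$ (the endpoint $s=-1$ serving to control the time integral of $\|\partial^\alpha\overline v_\eps^h\|_{L^2(\R^3)}^2$), and then to take the supremum over $x_3$ for the $L^\infty_v$ quantities and to integrate in $x_3$ for the $L^2_v$ ones. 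Global existence and smoothness of $\overline v_\eps^h$ are granted by the two–dimensional theory, $x_3$ entering only as a parameter.

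For the base case $\alpha=0$, I would start from the slice–wise energy equality, which for each $x_3$ controls $\|\overline v_\eps^h(t,\cdot,x_3)\|_{L^2_h}^2+2\int_0^t\|\nabla^h\overline v_\eps^h(\cdot,x_3)\|_{L^2_h}^2\,dt'$ by $\|\overline v_{\eps,0}^h(\cdot,x_3)\|_{L^2_h}^2\le\|\overline v_{\eps,0}^h\|_{L^\infty_v L^2_h}^2\lesssim 1$. Running the $\dot H^s_h$ energy estimate on the pressure–free 2D equation (obtained after applying $\P_h$) and bounding the quadratic term by $\tfrac12\|\overline v_\eps^h\|_{\dot H^{s+1}_h}^2+C\|\overline v_\eps^h\|_{L^2_h}^2\|\nabla^h\overline v_\eps^h\|_{L^2_h}^2\|\overline v_\eps^h\|_{\dot H^s_h}^2$ (using $\dot H^{s+1/2}(\R^2)\hookrightarrow\dot W^{s,4}$ together with $\|\,\cdot\,\|_{L^4(\R^2)}^2\lesssim\|\,\cdot\,\|_{L^2}\|\nabla\,\cdot\,\|_{L^2}$, and bootstrapping in $s$ if needed), Gronwall's lemma produces
$$
\|\overline v_\eps^h(t,\cdot,x_3)\|_{\dot H^s_h}^2+\int_0^t\|\overline v_\eps^h(\cdot,x_3)\|_{\dot H^{s+1}_h}^2\,dt'\;\le\;\|\overline v_{\eps,0}^h(\cdot,x_3)\|_{\dot H^s_h}^2\,\exp\!\big(C\|\overline v_{\eps,0}^h(\cdot,x_3)\|_{L^2_h}^4\big).
$$
The crucial point is that the Gronwall exponent is governed by the slice–wise energy, hence bounded uniformly in $\eps$ and $x_3$; taking $\sup_{x_3}$ then gives the $L^\infty_v\dot H^s_h$ bound with constant $\lesssim\|\overline v_{\eps,0}^h\|_{L^\infty_v\dot H^s_h}^2\lesssim 1$, while integrating in $x_3$ gives the $L^2_v$ part with constant $\lesssim\|\overline v_{\eps,0}^h\|_{L^2_v\dot H^s_h}^2\lesssim\eps^{-1}$, which is the announced bound $\eps(\cdots)\lesssim 1$. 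The time integral of $\|\overline v_\eps^h\|_{L^2(\R^3)}^2$ comes from the parabolic estimate $\tfrac{d}{dt}\|\overline v_\eps^h\|_{\dot H^{-1}_h}^2+\|\overline v_\eps^h\|_{L^2_h}^2\le C\|\nabla^h\overline v_\eps^h\|_{L^2_h}^2\|\overline v_\eps^h\|_{\dot H^{-1}_h}^2$, again closed by Gronwall with a uniformly bounded factor and then integrated in $x_3$ using the $L^2_v\dot H^{-1}_h$ bound on the data.

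The general case follows by induction on $\alpha_3$, and within each level on $\alpha_h$. Differentiating ${\rm(NS2D)}_{x_3}$ and applying $\P_h$, the field $\partial^\alpha\overline v_\eps^h$ solves, for each $x_3$, a two–dimensional \emph{linear} transport–diffusion equation with transport field $\overline v_\eps^h$, a zero–order term $\partial^\alpha\overline v_\eps^h\cdot\nabla^h\overline v_\eps^h$ whose coefficient $\nabla^h\overline v_\eps^h$ is square–integrable in time in $\dot H^{s'}_h$ for all $s'\ge -1$ by the base case (so it is absorbed by Gronwall without losing any power of $\eps$), and a forcing term made of products $\partial^\beta\overline v_\eps^h\cdot\nabla^h\partial^\gamma\overline v_\eps^h$ with $\beta_3+\gamma_3=\alpha_3$ and $(\beta,\gamma)$ strictly lower than $\alpha$ in the induction order; by the induction hypothesis each such product obeys, slice by slice, $\dot H^s_h$ bounds scaling like $\eps^{\alpha_3}$ in $L^\infty_v$ and like $\eps^{\alpha_3-1/2}$ in $L^2_v$, the powers $\eps^{\beta_3}$ and $\eps^{\gamma_3}$ multiplying (one of the two factors being placed in $L^2_v$ in the second case). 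Running the same $\dot H^s_h$ and $\dot H^{-1}_h$ energy estimates on this linear equation — the only genuinely new ingredient being the inhomogeneous term — and then taking $\sup_{x_3}$ or $\int_\R dx_3$ yields the bound on $\partial^\alpha\overline v_\eps^h$ with the factor $\eps^{2\alpha_3}$, exactly as in the proof of Lemma~3.1 and Corollary~3.1 of \cite{cg3}. The main obstacle is the bookkeeping: one must check that every Gronwall factor appearing in these slice–wise estimates is bounded uniformly in $\eps$ and in $x_3$ — which is precisely where the two–dimensional structure (global solutions, with an energy estimate controlling $\int_0^\infty\|\nabla^h\overline v_\eps^h(\cdot,x_3)\|_{L^2_h}^2\,dt'$ by $\|\overline v_{\eps,0}^h\|_{L^\infty_v L^2_h}^2$) is indispensable — and that the product estimates in $\dot H^s_h$ remain valid down to the endpoint $s=-1$.
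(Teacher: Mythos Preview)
Your proposal is correct and follows essentially the same approach as the paper: the paper establishes the initial data bounds $\eps^{\frac12-\alpha_3}\partial^\alpha\overline v^h_{\eps,0}\in L^2_v\dot H^s_h$ and $\eps^{-\alpha_3}\partial^\alpha\overline v^h_{\eps,0}\in L^\infty_v\dot H^s_h$ (from the spectral assumption $|\xi_3|\le\eps|\xi_h|$, via Gagliardo--Nirenberg for the $L^\infty_v$ part) and then simply invokes the propagation argument of \cite[Lemma~3.1 and Corollary~3.1]{cg3}, which is precisely the slice-wise 2D $\dot H^s_h$ energy/Gronwall induction you outline. Your write-up is in fact more detailed than what the paper provides, and your explicit reference to~\cite{cg3} confirms you have identified the intended argument.
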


\subsubsection{Estimates on $w_\eps$}
The definition of~${w}_{\varepsilon,0}$ given in~(\ref{defoverlinevandw}), along with the  spectral assumption on $\left(v_{\varepsilon,0}\right)_{\varepsilon>0}$ lead to
$$ 
\forall\varepsilon\in]0,1[\,,\quad\forall\xi\in\mbox{Supp}\,\widehat{w}_{\varepsilon,0}\,,\quad|\xi_{3}|\leq\varepsilon|\xi_{h}|\quad\mathrm{and}\quad\bigl|\widehat{w}_{\varepsilon,0}^{h}\left(\xi\right)\bigr|\leq  \varepsilon\bigl|\widehat{w}_{\varepsilon,0}^{3}\left(\xi\right)\bigr|\,.
$$
The proof of the following result is technical and postponed to section\refer{appendix}.
\begin{prop}
\label{estimatesw}
{\sl Under the assumptions of Theorem~{\rm\ref{spectral}}, $w_\eps^3$ and~$\eps^{-1} w_\eps^h$ are uniformly bounded in the space~$L^{\infty} (\mathbb{R}^{+};\, L_{v}^{\infty}L_{h}^{2} ) \cap L^2 (\mathbb{R}^{+};\, L_{v}^{\infty}\dot H_{h}^{s} )$ for all~$s\geq0$. Moreover
 $\eps^{\frac12-\alpha_3} \partial^\alpha w_{\varepsilon}$ is uniformly bounded in~$L^{\infty} (\mathbb{R}^{+};\, L_{v}^{2}\dot{H}_{h}^{s} ) \cap L^{2} (\mathbb{R}^{+};\, L_{v}^{2}\dot{H}_{h}^{s} )$ for all $s\geq0$ and  all $\alpha = (\alpha_h,\alpha_3) \in \N^2 \times \N $.
 }
\end{prop}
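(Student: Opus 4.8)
The plan is to prove Proposition~\ref{estimatesw} entirely by $L^2$-type energy estimates in the anisotropic Sobolev spaces~$\dot H^{\sigma,0}=L^2_v\dot H^\sigma_h$, with no recourse to (anisotropic) paradifferential calculus: equation~(T) is linear in~$w_\eps$, and its only nonlinear feature is the transport term~$\overline v_\eps^h\cdot\nabla^h w_\eps$, whose coefficient is fixed and fully controlled by Proposition~\ref{estimatesoverlinev}. The whole difficulty is to follow the powers of~$\eps$, using systematically two facts: every vertical derivative falling on~$\overline v_\eps^h$ is worth a factor~$\eps$ (Proposition~\ref{estimatesoverlinev}), and the data~$w_{\eps,0}=v_{\eps,0}-(\P_h v_{\eps,0}^h,0)$ is of size~$\eps^{-1/2}$ in~$L^2_v\dot H^\sigma_h$ for all~$\sigma\ge-1$ (by the hypotheses on~$v_{\eps,0}$) — in particular it lies in~$\dot H^{-1,0}$, which will produce the~$L^2(\R^+;\,\cdot\,)$ parts of the statement. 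Since a horizontal derivative merely shifts the exponent~$s$, it suffices to bound~$\partial_3^{\alpha_3}w_\eps$ in~$L^2_v\dot H^\sigma_h$ for all~$\sigma$ and all~$\alpha_3$, the horizontal derivatives then being free.

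\textbf{The~$L^2_v\dot H^s_h$ bounds (second assertion).} I would apply~$\partial_3^{\alpha_3}$ to~(T) and perform an~$\dot H^{\sigma,0}$ energy estimate, i.e.\ take the~$L^2(\R^3)$ scalar product of~$(-\Delta_h)^{\sigma/2}\partial_3^{\alpha_3}$ applied to~(T) with~$(-\Delta_h)^{\sigma/2}\partial_3^{\alpha_3}w_\eps$. The pressure term disappears after one integration by parts because~$\dive w_\eps=0$; the top-order transport term is antisymmetric because~$\dive_h\overline v_\eps^h=0$; the Laplacian produces the coercive term~$\|\nabla\partial_3^{\alpha_3}w_\eps\|_{\dot H^{\sigma,0}}^2$, controlling both~$\|\partial_3^{\alpha_3}w_\eps\|_{\dot H^{\sigma+1,0}}$ and~$\|\partial_3^{\alpha_3+1}w_\eps\|_{\dot H^{\sigma,0}}$. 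What remains is (i) a horizontal commutator~$[(-\Delta_h)^{\sigma/2},\overline v_\eps^h\cdot\nabla^h]\partial_3^{\alpha_3}w_\eps$, handled by an elementary product estimate in~$\dot H^\sigma(\R^2_h)$, the~$L^\infty(\R^2_h)$ norms of~$\overline v_\eps^h$ being finite thanks to the embedding~$\dot H^{\sigma_1}\cap\dot H^{\sigma_2}\hookrightarrow L^\infty(\R^2_h)$ ($\sigma_1<1<\sigma_2$) and Proposition~\ref{estimatesoverlinev}; and (ii) the Leibniz terms~$\sum_{1\le k\le\alpha_3}\binom{\alpha_3}{k}(\partial_3^k\overline v_\eps^h)\cdot\nabla^h\partial_3^{\alpha_3-k}w_\eps$, each of which, by~$\partial_3^k\overline v_\eps^h=O(\eps^k)$ in~$L^\infty$ and the induction hypothesis~$\partial_3^{\alpha_3-k}w_\eps=O(\eps^{\alpha_3-k-1/2})$, is of size~$O(\eps^{\alpha_3-1/2})$ — exactly the expected size of~$\partial_3^{\alpha_3}w_\eps$. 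Since~$\|\partial_3^{\alpha_3}w_{\eps,0}\|_{\dot H^{\sigma,0}}\le\eps^{\alpha_3}\|w_{\eps,0}\|_{\dot H^{\sigma+\alpha_3,0}}=O(\eps^{\alpha_3-1/2})$ by the spectral assumption~$|\xi_3|\le\eps|\xi_h|$, and the Grönwall integrating factor is controlled by~$\int_0^\infty\|\nabla^h\overline v_\eps^h(t)\|_{L^\infty(\R^3)}^2\,dt=O(1)$ (obtained by combining the~$L^2_v$ and~$L^\infty_v$ bounds of Proposition~\ref{estimatesoverlinev} through~$\sup_{x_3}|g|^2\lesssim\|g\|_{L^2_{x_3}}\|\partial_3g\|_{L^2_{x_3}}$), an induction on~$\alpha_3$ closes the estimate; running it for every~$\sigma\ge-1$ and taking~$\sigma=s$, resp.~$\sigma=s-1$, yields the~$L^\infty(\R^+;\,\cdot\,)$, resp.\ the~$L^2(\R^+;\,\cdot\,)$, part of the second assertion.

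\textbf{The~$L^\infty_v$ bounds (first assertion).} For any Banach space~$X$ of functions on~$\R^2_h$ one has~$\|f\|_{L^\infty_v X}\lesssim\|f\|_{L^2_v X}^{1/2}\|\partial_3 f\|_{L^2_v X}^{1/2}$. Applied with~$X=\dot H^s_h$ to~$w_\eps^3$, using the cases~$\alpha_3=0,1$ of the previous step (so~$\|w_\eps^3\|_{L^2_v\dot H^s_h}=O(\eps^{-1/2})$, $\|\partial_3 w_\eps^3\|_{L^2_v\dot H^s_h}=O(\eps^{1/2})$), the powers of~$\eps$ cancel and~$w_\eps^3$ is bounded in~$L^\infty(\R^+;L^\infty_v L^2_h)$; the~$L^2(\R^+;L^\infty_v\dot H^s_h)$ bound follows by Cauchy--Schwarz in time from the corresponding~$L^2_t$ bounds. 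For~$w_\eps^h$ the extra power of~$\eps$ comes from two facts: first, the data are already smaller, $|\widehat{w_{\eps,0}^h}|\le\eps|\widehat{w_{\eps,0}^3}|$; second, the pressure itself is small. Indeed, using~$\dive w_\eps=0$ and~$\dive_h\overline v_\eps^h=0$,
\[
-\Delta q_\eps=\dive_h\Bigl(\sum_{j=1}^{2}\partial_j\bigl(\overline v_\eps^h\,w_\eps^j\bigr)+(\partial_3\overline v_\eps^h)\,w_\eps^3+\overline v_\eps^h\,\partial_3 w_\eps^3\Bigr),
\]
where the first group of terms carries the (bootstrapped) small factor~$w_\eps^h$, while in the last two the vertical derivative has turned the \emph{a priori} large quantity~$\overline v_\eps^h\cdot\nabla^h w_\eps^3=O(\eps^{-1/2})$ into something of size~$O(\eps^{1/2})$, since~$\partial_3\overline v_\eps^h=O(\eps)$ and~$\partial_3 w_\eps^3=O(\eps^{1/2})$; hence~$\|q_\eps\|_{L^2_v\dot H^s_h}=O(\eps^{1/2})$. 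I would then redo the~$\dot H^{\sigma,0}$ energy estimate on~$\partial_3^{\alpha_3}w_\eps^h$; the sole new contribution is the pressure, which after a horizontal integration by parts and~$\dive_h w_\eps^h=-\partial_3 w_\eps^3$ becomes the scalar product of~$(-\Delta_h)^{\sigma/2}\partial_3^{\alpha_3}q_\eps$ with~$(-\Delta_h)^{\sigma/2}\partial_3^{\alpha_3+1}w_\eps^3$, a product of two quantities of size~$O(\eps^{\alpha_3+1/2})$, hence of size~$O(\eps^{2\alpha_3+1})$, exactly the budget for~$\frac{d}{dt}\|\partial_3^{\alpha_3}w_\eps^h\|_{\dot H^{\sigma,0}}^2$. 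A continuation argument recovering~$w_\eps^h=O(\eps^{1/2})$ with a better constant then gives~$\eps^{-1/2-\alpha_3}\partial_3^{\alpha_3}w_\eps^h$ bounded in~$L^2_v\dot H^s_h\cap L^2(\R^+;L^2_v\dot H^s_h)$, and a last application of the vertical Gagliardo--Nirenberg inequality yields~$\eps^{-1}w_\eps^h$ bounded in~$L^\infty(\R^+;L^\infty_v L^2_h)\cap L^2(\R^+;L^\infty_v\dot H^s_h)$.

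\textbf{Main obstacle.} The energy method and the horizontal product estimates are routine; the real work is the bookkeeping of the powers of~$\eps$, and above all the observation — decisive here and later in the remainder equation~$({\rm E}_\eps)$ — that the only large ingredient~$w_\eps^3$ never appears bare in a quadratic term: it is always either multiplied by the small factor~$w_\eps^h$ or~$\partial_3\overline v_\eps^h$, or hit by a vertical derivative, each vertical derivative being worth a factor~$\eps$. Making this accounting rigorous in every Leibniz term, and carrying it consistently through the continuation argument and through the control of the Grönwall integrating factor (which again rests on a vertical Gagliardo--Nirenberg inequality), is where the bulk of this technical proof lies.
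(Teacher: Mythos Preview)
Your route is genuinely different from the paper's. The paper first \emph{rescales}, setting
\[
W_\eps(t,x)\eqdefa\Bigl(\frac{w_\eps^h}{\eps},w_\eps^3\Bigr)(t,x_h,\eps^{-1}x_3),\qquad \overline V_\eps^h(t,x)\eqdefa\overline v_\eps^h(t,x_h,\eps^{-1}x_3),
\]
which makes the data~$W_{\eps,0}$ uniformly bounded in every~$\dot H^s(\R^3)$ and absorbs all the powers of~$\eps$ in one stroke --- including the extra~$\eps^{-1}$ on~$w_\eps^h$. The price is that the rescaled equation carries an \emph{anisotropic} pressure gradient~$(\nabla^hQ_\eps,\eps^2\partial_3Q_\eps)$, so the pressure no longer cancels against the divergence-free condition; the bulk of the paper's Section~\ref{appendix} is precisely the estimate of the leftover term~$(\eps^2-1)\langle\nabla^hQ_\eps,W_\eps^h\rangle_{L^2_v\dot H^s_h}$, done case by case in~$s$. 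Your observation that in the \emph{unrescaled} equation the (isotropic) pressure~$\nabla q_\eps$ disappears in the~$\dot H^{\sigma,0}$ inner product is correct and is a real simplification. What you trade for it is (i) bookkeeping the powers of~$\eps$ by hand through the induction on~$\alpha_3$, and (ii) a separate bootstrap argument on~$w_\eps^h$ alone (where the pressure no longer vanishes) to recover the extra factor~$\eps$. The paper gets (ii) for free from the rescaling.

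Two points deserve more care than you give them. First, your claim that the same energy--Gr\"onwall scheme ``runs for every~$\sigma\ge-1$'' is not quite right at the endpoint~$\sigma=-1$ (needed for the~$L^2(\R^+;L^2_v\dot H^0_h)$ bound): the commutator estimate of~\cite[Lemma~1.1]{chemin10} that makes the transport term harmless requires~$|\sigma|<1$, and the cruder bound~$|\langle\overline v_\eps^h\cdot\nabla^hw_\eps,w_\eps\rangle_{\dot H^{-1,0}}|\le\|\overline v_\eps^h\|_{L^\infty}\|w_\eps\|_{L^2}\|w_\eps\|_{\dot H^{-1,0}}$ produces a Gr\"onwall factor~$\|\overline v_\eps^h\|_{L^\infty}^2\notin L^1_t$. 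The paper treats this endpoint separately (second half of Proposition~\ref{propositionWeps}), bounding the whole forcing in~$L^1(\R^+;L^2_v\dot H^{-1}_h)$ \emph{after} the~$L^2_tL^\infty_v\dot H^{1/2}_h$ bounds on~$W_\eps$ are available; the same fix works in your framework, but the order of the argument has to be: energy estimates for~$\sigma>-1$ first, then Gagliardo--Nirenberg to get the~$L^\infty_v$ bounds, and only then the~$\sigma=-1$ case. Second, your bootstrap on~$w_\eps^h$ is only sketched: the pressure formula you display contains~$w_\eps^h$ itself, and closing the continuation argument requires checking that the contribution of~$\overline v_\eps^h\otimes w_\eps^h$ to~$q_\eps$ comes back with a \emph{strictly better} constant, which in turn needs an~$L^2_t$-type bound on~$\overline v_\eps^h$ (available, but with a factor~$\eps^{-1/2}$ in~$L^2_tL^2_v\dot H^{1/2}_h$ that must be compensated elsewhere). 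This can be made to work, but it is exactly the bookkeeping the paper's rescaling is designed to avoid.
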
 
%
%To use the results of~\cite{cg3}  we define 
%$$ 
%W_{\varepsilon}=\left(\dfrac{w_{\varepsilon}^{h}}{\varepsilon},w_{\varepsilon}^{3}\right)\left(.,.,\varepsilon^{-1}.\right)
%$$
%such that $\left(W_{\varepsilon,0}\right)$ is uniformly bounded in $L_{v}^{2}\dot{H}_{h}^{s}$. Then, the arguments given in Lemma 3.2 of~\cite{cg3} (details are given in Appendix)(dire qu'il faut aussi reprendre les arguments utilis√às pour l'estimation de $v_{\varepsilon}$?) gives us that for all $s\geq0$ and for all $\alpha\in\mathbb{N}^{3}$, $\partial^{\alpha}W_{\varepsilon}$ is uniformly bounded in $L^{2}\left(\mathbb{R}^{+};\, L_{v}^{2}\dot{H}_{h}^{s}\right)$. That can be translated as
The Gagliardo-Nirenberg inequality and   Sobolev embeddings lead to   Lemma~\ref{bounds}.

\subsection{Proof of the estimates on the remainder (Lemma~\ref{lemmaquasi2D2})}\label{remainder}
 Substracting the equation on~$u_\e^{app}$ from the equation on~$u$ one finds directly that
 $$
 F_\e = (\partial_3^2 \overline v_\e^h, \partial_3 \overline p_\eps)  + w_\eps \cdot \nabla v_\e^{app} + u \cdot \nabla  v_\e^{app} + v_\e^{app} \cdot \nabla  u \, ,
 $$
 which we decompose into~$F_\e = G_\e + H_\e $ with
 $$
  G_\e \eqdefa (\partial_3^2 \overline v_\e^h, \partial_3 \overline p_\eps)  + w_\eps \cdot \nabla v_\e^{app}  \quad \mbox{and} \quad H_\e \eqdefa  u \cdot \nabla  v_\e^{app} + v_\e^{app} \cdot \nabla  u \, .
 $$
 Lemma~\ref{lemmaquasi2D2}
 follows from the two following propositions.
 \begin{prop}
 \label{estimateG}
 {\sl There is a positive constant~$C$  such that  for all~$\eps$ in~$ ]0,1[$,
 $$
 \|G_\eps\|_{L^2(\R^+;\dot H^{-\frac 1 2}(\R^3))} \leq C\eps^\frac12 \, . 
 $$
 }
  \end{prop}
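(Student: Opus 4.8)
The plan is to estimate the two pieces of $G_\eps$ separately in $L^2(\R^+;\dot H^{-1/2}(\R^3))$, exploiting the anisotropic gains of $\eps$ supplied by Propositions~\ref{estimatesoverlinev} and~\ref{estimatesw}. First I would treat the genuinely linear term $(\partial_3^2\overline v_\e^h,\partial_3\overline p_\eps)$. Because $\overline v_\e^h$ has spectrum in $\{|\xi_3|\le\eps|\xi_h|\}$, each vertical derivative $\partial_3$ costs a factor $\eps$ while behaving like a horizontal derivative at the level of the base $\dot H^s_h$ norms; so I would bound $\|\partial_3^2\overline v_\e^h\|_{\dot H^{-1/2}(\R^3)}$ by $\eps^2$ times a horizontal Sobolev norm of $\overline v_\e^h$ (roughly $\dot H^{3/2,0}$ plus integrability in $x_3$), absorbing the $x_3$-integration with the $\eps^{1/2}$ factor from the $L^2_v$-control in Proposition~\ref{estimatesoverlinev}. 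After squaring and integrating in time this yields a bound of the form $\eps^{3/2}$ or better; the same computation handles $\partial_3\overline p_\eps$ since the pressure is recovered from $\overline v_\e^h$ by a zero-order (in the horizontal variable) singular integral, so it inherits the same spectral localization and the same $\eps$-gains.

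Next I would estimate the quadratic term $w_\eps\cdot\nabla v_\e^{app}=w_\eps\cdot\nabla(\overline v_\e^h,0)+w_\eps\cdot\nabla w_\eps$. The idea is to put the low-regularity factor in $L^\infty_v L^2_h$ and the other in $L^2$-in-time of some $L^\infty_v\dot H^s_h$, and to use the product law $\|fg\|_{\dot H^{-1/2}(\R^3)}\lesssim \|f\|_{L^\infty_v L^2_h}\|g\|_{L^\infty_v\dot H^{1}_h}$ (or a symmetric version), together with a commutation of $\dot H^{-1/2}$ in $x_3$ against the $L^\infty_v$ norm which is harmless here because $w_\eps$ and $\overline v_\e^h$ are much more regular in $x_3$ than in $x_h$ — indeed Propositions~\ref{estimatesoverlinev} and~\ref{estimatesw} control all $\partial_3^{\alpha_3}$ with a gain $\eps^{\alpha_3}$. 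The crucial accounting is the power of $\eps$: from Proposition~\ref{estimatesw}, $w_\eps$ is $O(\eps^{1/2})$ in $L^2(\R^+;L^2_v\dot H^s_h)$-type norms and $w_\eps^h$ is moreover $O(\eps)$ in $L^\infty_vL^2_h$; and $\partial_h(\overline v_\e^h,0)$ or $\partial_h w_\eps$ are $O(1)$ in $L^\infty_vL^2_h$ uniformly in time. Pairing an $O(\eps^{1/2})$ factor (one of $w_\eps, \overline v_\e^h$ in an $L^2_t$ norm) against an $O(1)$ factor, and noting the horizontal derivative hitting one of the factors is compensated by one horizontal smoothing from the other via the product estimate, I expect to land at $\|w_\eps\cdot\nabla v_\e^{app}\|_{L^2(\R^+;\dot H^{-1/2})}\lesssim\eps^{1/2}$. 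Summing the two contributions gives the claimed $C\eps^{1/2}$.

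The main obstacle, as usual in this anisotropic setting, is the bookkeeping of the $\dot H^{-1/2}$ norm in the full three-dimensional variable against the anisotropic norms ($L^\infty_v\dot H^s_h$, $L^2_v\dot H^s_h$) in which we actually control the factors. One must be careful that the nonlocal operator $|D|^{-1/2}$ in $\R^3$ does not mix the horizontal and vertical scales in a way that destroys the $\eps$-gains; the clean way around this is to replace $|D|^{-1/2}$ by $|D_h|^{-1/2}$ up to an error absorbed by the spectral localization $|\xi_3|\le\eps|\xi_h|$ (so that $|\xi|\sim|\xi_h|$ on the support), reducing everything to two-dimensional horizontal product laws with $x_3$ as a harmless parameter, and then integrating in $x_3$ using Minkowski's inequality and the $L^2_v$-bounds weighted by $\eps^{1/2}$. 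Once that reduction is made, the remaining inequalities are the standard Gagliardo--Nirenberg and horizontal Sobolev product estimates already used to derive Lemma~\ref{bounds}, and the proof is essentially a matter of tracking constants and powers of $\eps$.
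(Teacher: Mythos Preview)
Your overall plan---split $G_\eps$ into the linear piece $(\partial_3^2\overline v_\eps^h,\partial_3\overline p_\eps)$ and the quadratic piece $w_\eps\cdot\nabla v_\eps^{app}$, then harvest the $\eps$-gains from Propositions~\ref{estimatesoverlinev} and~\ref{estimatesw}---is exactly the paper's. But the mechanism you invoke contains a real error.

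You assert that $\overline v_\eps^h$ (and implicitly $w_\eps$, $\overline p_\eps$) has spectrum in $\{|\xi_3|\le\eps|\xi_h|\}$, and you use this both to claim that each $\partial_3$ costs a factor $\eps$ and to replace $|D|^{-1/2}$ by $|D_h|^{-1/2}$. This spectral localization holds only for the \emph{initial data}; it is not preserved by the nonlinear two-dimensional flow $({\rm NS2D})_{x_3}$ nor by the transport--diffusion system $({\rm T})$. Indeed, the quadratic term produces a convolution in $\xi_3$, and the cone $\{|\xi_3|\le\eps|\xi_h|\}$ is not stable under Minkowski sums (take $\eta_h=-\zeta_h$). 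So neither of your two uses of spectral localization is justified, and in particular your claimed $\eps^{3/2}$ for the linear term is too optimistic.

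The fix is immediate and is what the paper does. First, the $\eps$-gains per vertical derivative are already \emph{conclusions} of Propositions~\ref{estimatesoverlinev} and~\ref{estimatesw} (proved by propagating the initial gains through energy estimates), so quote them directly rather than reproving them via a false spectral claim. Second, the passage from $\dot H^{-\frac12}(\R^3)$ to horizontal norms needs no spectral hypothesis: since $|\xi|\ge|\xi_h|$ one always has $L^2_v\dot H^{-\frac12}_h=\dot H^{-\frac12,0}\hookrightarrow\dot H^{-\frac12}(\R^3)$, so your ``main obstacle'' is in fact free when bounding from above. Concretely, the paper writes $\|\partial_3^2\overline v_\eps^h\|_{\dot H^{-\frac12}}\le\|\partial_3\overline v_\eps^h\|_{\dot H^{\frac12}}$ and interpolates; handles $\partial_3\overline p_\eps$ via the above embedding together with $L^{4/3}_h\hookrightarrow\dot H^{-\frac12}_h$ and H\"older; and treats $w_\eps\cdot\nabla v_\eps^{app}$ by the two-dimensional product law $\dot H^{\frac14}_h\times\dot H^{\frac14}_h\to\dot H^{-\frac12}_h$, placing one factor in $L^\infty_v$ and the other in $L^2_v$ (your stated product law with both factors in $L^\infty_v$ cannot produce the required $L^2_v$).
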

  \begin{proof}
  Let us start by splitting~$G_\e$  in three parts: $G_\e= G_\e^1 + G_\e^2+ G_\e^3$ with
  $$
  G_\e^1 \eqdefa (\partial_3^2 \overline v_\e^h, 0) \, , \, \,    G_\e^2 \eqdefa (0,\partial_3 \overline p_\eps) \, , \quad \mbox{and} \quad 
   G_\e^3 \eqdefa w_\eps \cdot \nabla v_\e^{app}    \, .
 $$
 On the one hand we have obviously
 $$
 \|  G_\e^1\|_{L^2(\R^+;\dot H^{-\frac 1 2}(\R^3))}  \leq \|\partial_3   \overline v_\e^h\|_{L^2(\R^+;\dot H^{\frac 1 2}(\R^3))} \, .
 $$
Proposition~\ref{estimatesoverlinev} applied with~$\alpha = (0,1)$, $\alpha = (0,2)$ and $\alpha = (\al_h,1) $ with~$|\al_h|=1$ gives
$$
\int_0^t \|\partial_3\overline v_\e(t',\cdot)\|_{L^2}^2dt' \lesssim \e\quad\hbox{and}\quad
\int_0^t \|\partial_3\nabla \overline v_\e(t',\cdot)\|_{L^2}^2dt' \lesssim \e.
$$
By interpolation, we infer that 
\beq
\label{estimateGdemoeq1}
 \|  G_\e^1\|_{L^2(\R^+;\dot H^{-\frac 1 2}(\R^3))}  \lesssim \eps^\frac12 \, .
\eeq
To estimate~$G_\e^2$ we use the fact that
$$
-\Delta_h  \overline p_\eps = \sum_{j,k=1}^2 \partial_j \partial_k (  \overline v_\e^j   \overline v_\e^k)
$$
and since~$(-\Delta_h)^{-1} \partial_j \partial_k $ is a Fourier multiplier of order 0 for each~$(j,k)$ in~$ \{1,2\}^2$ we get
$$
 \|  G_\e^2\|_{L^2(\R^+;\dot H^{-\frac 1 2}(\R^3))} \lesssim \sum_{j,k=1}^2 \big \|\overline v_\e^j \partial_3  \overline v_\e^k \big\|_{L^2(\R^+;\dot H^{-\frac 1 2}(\R^3))} \, .
$$
As~$ L^2_v \dot H^{-\frac12}_h \hookrightarrow \dot H^{-\frac 1 2}(\R^3)$, we get
$$
 \|  G_\e^2\|_{L^2(\R^+;\dot H^{-\frac 1 2}(\R^3))}  \lesssim \sum_{j,k=1}^2 \big \|\overline v_\e^j \partial_3  \overline v_\e^k \big\|_{L^2(\R^+;L^2_v \dot H^{-\frac12}_h)} \, .
$$  
Using the Sobolev embedding~$ L^\frac43_h \hookrightarrow\dot H^{-\frac12}_h$ and H\"older's inequality gives
$$
\begin{aligned}
 \|  G_\e^2\|_{L^2(\R^+;\dot H^{-\frac 1 2}(\R^3))} & \lesssim  \sum_{j,k=1}^2 \big \|\overline v_\e^j \partial_3  \overline v_\e^k \big\|_{L^2(\R^+;L^2_v L^\frac43_h)} \\
 & \leq C  \| \overline v_\e^h\|_{L^\infty(\R^+;L^\infty_v L^{\frac83}_h)} \| \partial_3\overline v_\e^h\|_{L^\infty(\R^+;L^\infty_v L^{\frac83}_h)}
 \end{aligned}
$$
so the Sobolev embedding~$   \dot H^{\frac14}_h\hookrightarrow L^\frac83_h$ gives finally
$$
\begin{aligned}
 \|  G_\e^2\|_{L^2(\R^+;\dot H^{-\frac 1 2}(\R^3))} & \lesssim C \| \overline v_\e^h\|_{L^\infty(\R^+;L^\infty_v \dot H^{\frac14}_h)}
  \|\partial_3 \overline v_\e^h\|_{L^2(\R^+;L^2_v \dot H^{\frac14}_h)} \, .
\end{aligned}
$$
The result follows again from Proposition~\ref{estimatesoverlinev}: choosing~$s = 1/4$ and~$\alpha = 0$ we get that~$ \overline v_\e^h$ is uniformly bounded in~$L^\infty(\R^+;L^\infty_v \dot H^{\frac14}_h)$, while~$s=-3/4$ and~$\alpha = (\alpha_h,1)$ with~$|\alpha_h|=1$ gives
$$
\|\partial_3 \overline v_\e^h\|_{L^2(\R^+;L^2_v \dot H^{\frac14}_h)} \lesssim \eps^\frac12 \, .
$$
We infer finally that
\beq
\label{estimateGdemoeq2}
 \|  G_\e^2\|_{L^2(\R^+;\dot H^{-\frac 1 2}(\R^3))}  \lesssim \eps^\frac12 \, .
\eeq
To end the proof of the proposition let us estimate~$G_\eps^3$. We simply use two-dimensional product laws, which gives
$$
\begin{aligned}
 \|  G_\e^3\|_{L^2(\R^+;\dot H^{-\frac 1 2}(\R^3))}  & = \|w_\eps \cdot \nabla  v_\eps^{app}\|_{L^2(\R^+;\dot H^{-\frac 1 2}(\R^3))} \\
 & \lesssim \|w_\eps^h\|_{L^\infty(\R^+;L^\infty_v \dot H^\frac14_h)}  \|\nabla^h  v_\eps^{app}\|_{L^\infty(\R^+;L^2_v \dot H^\frac14_h)} \\
 & \qquad \qquad +  \|w_\eps^3\|_{L^\infty(\R^+;L^\infty_v \dot H^\frac14_h)}  \| \partial_3  v_\eps^{app}\|_{L^\infty(\R^+;L^2_v \dot H^\frac14_h)} 
 \lesssim \eps^\frac12 \, ,
  \end{aligned}
$$
due to Propositions~\ref{estimatesoverlinev} and~\ref{estimatesw}.
Together with Inequalities\refeq{estimateGdemoeq1} \and\refeq{estimateGdemoeq2} that proves Proposition~\ref{estimateG}.
\end{proof}
\begin{prop}\label{estimateH}
{\sl Let~$\delta>0$ be given. There are positive constants~$\eps(\d)$ and~$c(\d)$ such that if~$\eps\leq \eps(\d)$ and if~$\|v_{\eps,0}(\cdot,0)\|_{L^2_h} \leq c(\d)$, then 
 $$
  \|H_\eps\|_{L^2(\R^+;\dot H^{-\frac 1 2}(\R^3))} \leq \delta  .
 $$
 }
  \end{prop}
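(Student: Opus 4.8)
The plan is to estimate the two pieces of~$H_\eps = u \cdot \nabla v_\e^{app} + v_\e^{app} \cdot \nabla u$ separately, exploiting the anisotropy of~$v_\e^{app} = (\overline v_\e^h,0) + w_\eps$. The key structural observation is that the vertical component of~$v_\e^{app}$ is only~$w_\eps^3$ (there is no~$\overline v_\e^3$), that the horizontal part is essentially two-dimensional in its fast variables, and that every vertical derivative~$\partial_3$ falling on~$\overline v_\e^h$ or~$w_\eps^h$ costs a factor~$\eps$ (Propositions~\ref{estimatesoverlinev} and~\ref{estimatesw}), while~$w_\eps^h$ itself is~$O(\eps)$. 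First I would write~$\dot H^{-\frac12}(\R^3)$-estimates of products using the anisotropic embeddings already introduced in Section~\ref{prelims}, typically $L^2_v\dot H^{-\frac12}_h \hookrightarrow \dot H^{-\frac12}(\R^3)$ together with $L^{4/3}_h \hookrightarrow \dot H^{-\frac12}_h$ and $\dot H^{1/4}_h \hookrightarrow L^{8/3}_h$, so that a product~$f g$ in~$\dot H^{-\frac12}(\R^3)$ is controlled by placing one factor in~$L^\infty_v\dot H^{1/4}_h$-type spaces and the other in~$L^2_v\dot H^{1/4}_h$-type spaces, exactly as in the proof of Proposition~\ref{estimateG}. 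The bounds on~$u$ and~$\nabla u$ from~\cite[Appendix B]{cgz} recalled in Paragraph~\ref{estimates} ($u \in L^2(\R^+;L^\infty)$, $\nabla u \in L^2(\R^+;L^\infty_v L^2_h)$) are what I would pair against the~$v_\e^{app}$ factors.

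The term that is genuinely~$O(\eps^{1/2})$, and hence causes no trouble, is everything involving either a horizontal component of~$w_\eps$ (which carries an~$\eps$), or a vertical derivative of~$\overline v_\e^h$ or~$w_\eps^h$ (each carrying an~$\eps$ from the spectral localization), or the~$L^2_v$-norms of~$\overline v_\e^h$, which by Proposition~\ref{estimatesoverlinev} come with~$\eps^{1/2}$. Concretely $u\cdot\nabla^h \overline v_\e^h$, $u^3\partial_3 w_\eps^3$, $\overline v_\e^h\cdot\nabla^h u$ via its vertical integrability, $w_\eps^h\cdot\nabla u$, and so on, should all be~$\lesssim \eps^{1/2}$ times norms of~$u$ and the profiles, and can be absorbed into a term~$\lesssim \eps^{1/2} \leq \delta/2$ for~$\eps$ small. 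The main obstacle is the one remaining piece which is \emph{not} small by powers of~$\eps$: the contribution $w_\eps^3 \,\partial_3 u$ (from~$v_\e^{app}\cdot\nabla u$), and symmetrically $u^h\cdot\nabla^h$ paired with parts of~$w_\eps$, where nothing forces smallness except the initial trace. Here the point is that~$w_\eps^3$, though $O(1)$ in the relevant anisotropic norms, inherits its size from~$w_{\eps,0}^3$, and the construction of~$w_\eps$ via the linear transport-diffusion equation~(T) — together with the propagation result announced for Section~\ref{appendix} — shows that the norms controlling this dangerous term are bounded by~$\|v_{\eps,0}(\cdot,0)\|_{L^2_h}$ (this is precisely why the trace smallness hypothesis is imposed, and why it is unnecessary when~$u_0\equiv 0$).

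Thus the strategy is: fix~$\delta>0$; choose~$\eps(\d)$ so small that all the~$\eps$-gaining contributions sum to at most~$\delta/2$ (using the uniform bounds of Propositions~\ref{estimatesoverlinev} and~\ref{estimatesw} and the fixed norms of~$u$); then, with~$\eps$ so fixed, choose~$c(\d)$ so small that the remaining contribution, which is bounded by~$C(u)\,\|v_{\eps,0}(\cdot,0)\|_{L^2_h}$ through the~$w_\eps$-estimates, is at most~$\delta/2$. Adding, $\|H_\eps\|_{L^2(\R^+;\dot H^{-\frac12}(\R^3))} \leq \delta$. The hard part is the bookkeeping that isolates exactly which terms gain a power of~$\eps$ versus which must be controlled by the trace; once the term~$w_\eps^3\,\partial_3 u$ is correctly identified as the only obstruction, the rest is the same anisotropic product-law machinery already used for Proposition~\ref{estimateG}.
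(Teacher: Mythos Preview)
There is a genuine gap in two places. First, the~$L^2_v$-norms of~$\overline v_\eps^h$ do not gain~$\eps^{1/2}$; by hypothesis~$\sqrt\eps\, v_{\eps,0}$ is bounded in~$L^2_v\dot H^s_h$, so these norms are of order~$\eps^{-1/2}$, and Proposition~\ref{estimatesoverlinev} with~$\alpha_3=0$ confirms this at all times. Consequently neither~$u^h\cdot\nabla^h\overline v_\eps^h$ nor~$\overline v_\eps^h\cdot\nabla^h u$ is~$O(\eps^{1/2})$: any pairing that puts~$\overline v_\eps^h$ into an~$L^2_v$-based space blows up, and the alternative (putting~$\overline v_\eps^h$ or~$\nabla^h\overline v_\eps^h$ in~$L^\infty_v$-based spaces) only gives~$O(1)$. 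The order-one contributions in~$H_\eps$ thus include~$u^h\cdot\nabla^h v_\eps^{app}$ and~$v_\eps^{app}\cdot\nabla u$ in full, not just~$w_\eps^3\partial_3 u$. Second, your cure for the dangerous terms is wrong: the propagation result in Section~\ref{appendix} gives uniform~$O(1)$ bounds on~$w_\eps^3$, not bounds controlled by~$\|v_{\eps,0}(\cdot,0)\|_{L^2_h}$. The trace hypothesis only constrains the slice~$x_3=0$; away from that slice~$v_\eps^{app}$ is genuinely of order one, and no norm of~$w_\eps^3$ on all of~$\R^3$ is made small by the trace alone.

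The paper's mechanism is entirely different. It applies the inequality (Lemma~3.4 of~\cite{cgz})
\[
\|ab\|_{\dot H^{-\frac12}}\lesssim \|a\|_{L^2_v\dot H^{\frac12}_h}\,\|b(\cdot,0)\|_{L^2_h}+\|x_3 a\|_{L^2}\,\|\partial_3 b\|_{L^\infty_v\dot H^{\frac12}_h}
\]
with~$b$ equal to~$v_\eps^{app}$ or~$\nabla^h v_\eps^{app}$: the first term on the right is small because~$\|v_\eps^{app}(\cdot,0)\|_{L^\infty_tL^2_h}+\|\nabla^h v_\eps^{app}(\cdot,0)\|_{L^2_tL^2_h}$ is small (this is Proposition~\ref{estimateat0}, where the trace hypothesis actually enters), and the second gains~$\eps$ from~$\partial_3 v_\eps^{app}$. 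But the factor~$\|x_3 a\|_{L^2}$ requires spatial decay of~$u$, which a generic element of~$\dot H^{1/2}$ does not have; this is why the paper first approximates~$u$ in~$L^\infty_t\dot H^{1/2}$ by a compactly supported~$\widetilde u_\eta$, absorbs~$H_\eps-H_{\eps,\eta}$ by~$\eta$ via~\cite[Lemma~3.3]{cgz}, and only then applies the trace inequality with~$a$ built from~$\widetilde u_\eta$. Both ingredients --- the trace product lemma and the compactly supported approximation of~$u$ --- are missing from your sketch and cannot be replaced by the direct anisotropic product-law bookkeeping you describe.
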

  \begin{proof}
First, we approximate $H_\eps$, and then we estimate this approximation. 

 Using~\cite[Theorem 2.1]{gallip} we get
$$
\underset{t\rightarrow\infty}{\mbox{lim}}\Vert  u\left(t,.\right)\Vert   _{\dot{H}^{\frac12}\left(\mathbb{R}^{3}\right)}=0
$$
so we can approximate $u$ in $L^{\infty} (\mathbb{R}^{+},\dot{H}^{\frac12} )$: for all $\eta >0$, there exists an integer $N$, real numbers~$\left(t_{j}\right)_{0\leq j\leq N}$ and smooth,  compactly supported, divergence free functions~$\left(\phi_{j}\right)_{1\leq j\leq N}$ such that 
$$
\widetilde u_{\eta}(t) \eqdefa \sum_{j=1}^N \boldsymbol{1}_{\left[t_{j-1},t_{j}\right]}\left(t\right)\phi_{j}
$$
is uniformly bounded in~$L^{\infty} (\mathbb{R}^{+},\dot{H}^{\frac12} ) \cap L^{2} (\mathbb{R}^{+},\dot{H}^{\frac32} ) $
and satisfies
\begin{equation}\label{smalleta}
\Vert  u-\widetilde u_{\eta}\Vert   _{L^{\infty} (\mathbb{R}^{+},\dot{H}^{\frac12} (\mathbb{R}^{3} ) )}\leq\eta\,.
\end{equation}
We split $H_\eps$ into two contributions 
$$
H_\eps=H_{\varepsilon,\eta} + (\widetilde u_{\eta}-u )\cdot\nabla v_{\varepsilon}^{app}+v_{\varepsilon}^{app}\cdot\nabla (\widetilde u_{\eta}-u )$$ 
with $H_{\varepsilon,\eta} \eqdefa\widetilde u_{\eta}\cdot\nabla v_{\varepsilon}^{app}+v_{\varepsilon}^{app}\cdot\nabla\widetilde u_{\eta} \, .$

As  $v_{\varepsilon}^{app}$ and~$\widetilde u_{\eta}-u$ are divergence free vector fields, 
$$
H_{\varepsilon} -H_{\varepsilon,\eta} =\mbox{div}\big( (\widetilde u_{\eta}-u )\otimes v_{\varepsilon}^{app}+v_{\varepsilon}^{app}\otimes (\widetilde u_{\eta}-u ) \big)\,.
$$
Thanks to~\cite[Lemma 3.3]{cgz} we get
$$
\Vert  H_{\varepsilon} -H_{\varepsilon,\eta} \Vert   _{\dot{H}^{-\frac12}}\lesssim \Vert \widetilde u_{\eta}-u \Vert _{\dot{H}^{\frac12}}\big( \Vert \nabla^hv_{\varepsilon}^{app} \Vert _{L_{v}^{\infty}L_{h}^{2}}+ \Vert v_{\varepsilon}^{app} \Vert _{L^{\infty}}+ \Vert \partial_{3}v_{\varepsilon}^{app} \Vert _{L_{v}^{2}\dot{H}_{h}^{\frac12}} \big)
$$
and Proposition \ref{estimatesoverlinev} along with~(\ref{smalleta}) lead to 
$$
\Vert   H_{\varepsilon} -H_{\varepsilon,\eta} \Vert   _{L^{2}(\mathbb{R}^{+},\dot{H}^{-\frac12}\left(\mathbb{R}^{3}\right))}\lesssim\eta\,.$$

It remains to estimate $H_{\varepsilon,\eta} =\widetilde u_{\eta}\cdot\nabla v_{\varepsilon}^{app}+v_{\varepsilon}^{app}\cdot\nabla\widetilde u_{\eta}$.
By Propositions~\ref{estimatesoverlinev} and~\ref{estimatesw} we have 
$$
 \Vert \widetilde u_{\eta}^{3}\partial_{3}v_{\varepsilon}^{app} \Vert _{L^{2} (\mathbb{R}^{+},\dot{H}^{-\frac12} (\mathbb{R}^{3} ) )}\begin{array}[t]{l}
\lesssim \Vert \widetilde u_{\eta}^{3} \Vert _{L^{\infty} (\mathbb{R}^{+},\dot{H}^{\frac12} (\mathbb{R}^{3} ) )} \Vert \partial_{3}v_{\varepsilon}^{app} \Vert _{L^{2} (\mathbb{R}^{+},\dot{H}^{\frac12} (\mathbb{R}^{3} ) )}\\
\lesssim \Vert \widetilde u_{\eta}^{3} \Vert _{L^\infty(\mathbb{R}^{+},\dot{H}^{\frac12} (\mathbb{R}^{3} ))} \, \varepsilon^{\frac12}.
\end{array}
$$
 Since $\widetilde u_{\eta}$ is uniformly bounded in~$L^{\infty} (\mathbb{R}^{+},\dot{H}^{\frac12} (\mathbb{R}^{3} ) )$, we infer that
$$ 
\lim_{\varepsilon\rightarrow0}  \Vert \widetilde u_{\eta}^{3}\partial_{3}v_{\varepsilon}^{app} \Vert _{L^{2} (\mathbb{R}^{+},\dot{H}^{-\frac12} (\mathbb{R}^{3} ) )}=0\,.
$$
 Lemma 3.4 of~\cite{cgz} claims that
 $$
\|ab\|_{\dot H^{-\frac 1 2}} \leq C\|a\|_{L^2_v\dot H^{\frac 1 2}_h}
\|b(\cdot ,0)\|_{L^2_h} + C\|x_3 a \|_{L^2}
\|\partial_3b\|_{L^\infty_v\dot H^{\frac 1 2}_h}.
$$
So we  get
$$
  \Vert \widetilde u_{\eta}^{h}\cdot\nabla^hv_{\varepsilon}^{app} \Vert _{\dot{H}^{-\frac12}}\lesssim \Vert \widetilde u_{\eta}^{h} \Vert _{L_{v}^{2}\dot{H}_{h}^{\frac12}} \Vert \nabla^hv_{\varepsilon}^{app}\left(\cdot,0\right) \Vert _{L_{h}^{2}}+ \Vert x_3\widetilde u_{\eta}^{h} \Vert _{L^{2}} \Vert \partial_{3}\nabla^hv_{\varepsilon}^{app} \Vert _{L_{v}^{\infty}\dot{H}_{h}^{\frac12}}
 $$
 and
 $$
 \Vert  v_{\varepsilon}^{app}\cdot\nabla\widetilde u_{\eta}\Vert   _{\dot{H}^{-\frac12}}\lesssim\Vert  \nabla\widetilde u_{\eta}\Vert   _{L_{v}^{2}\dot{H}_{h}^{\frac12}}\Vert  v_{\varepsilon}^{app}\left(\cdot,0\right)\Vert   _{L_{h}^{2}}+\Vert  x_3\nabla\widetilde u_{\eta}\Vert   _{L^{2}}\Vert  \partial_{3}v_{\varepsilon}^{app}\Vert   _{L_{v}^{\infty}\dot{H}_{h}^{\frac12}}\,.
 $$
Propositions~\ref{estimatesoverlinev} and~\ref{estimatesw} lead  to 
$$
\lim_{\varepsilon\rightarrow0} \int_{\mathbb{R}+} \Vert x_3\widetilde u_{\eta}^{h} (t)\Vert _{L^{2}\left(\mathbb{R}^{3}\right)}^{2} \Vert \partial_{3}\nabla^hv_{\varepsilon}^{app}  (t)\Vert _{L_{v}^{\infty}\dot{H}_{h}^{\frac12}\left(\mathbb{R}^{3}\right)}^{2} \, dt =0
$$
and
$$
\lim_{\varepsilon\rightarrow0} \int_{\mathbb{R}+} \Vert x_3\nabla\widetilde u_{\eta}  (t)\Vert _{L^{2}\left(\mathbb{R}^{3}\right)}^{2} \Vert \partial_{3}v_{\varepsilon}^{app}  (t)\Vert _{L_{v}^{\infty}\dot{H}_{h}^{\frac12}\left(\mathbb{R}^{3}\right)}^{2} \, dt =0 \,.
$$
Now we recall that~$\widetilde u_{\eta}$ is uniformly bounded in~$L^{\infty} (\mathbb{R}^{+},\dot{H}^{\frac12} ) \cap L^{2} (\mathbb{R}^{+},\dot{H}^{\frac32} ) $, hence~$\widetilde u_{\eta}$ is uniformly bounded in~$L^{\infty} (\mathbb{R}^{+},L^2_v\dot{H}^{\frac12}_h ) $ and~$\nabla \widetilde u_{\eta}$  is uniformly bounded in~$L^{2} (\mathbb{R}^{+},L^2_v \dot{H}^{\frac12}_h ) $. So in order to to conclude  we   just have to estimate 
$$
 \Vert v_{\varepsilon}^{app} (\cdot,0 ) \Vert _{L^{\infty} (\mathbb{R}^{+},L_{h}^{2} (\mathbb{R}^{2} ) )} +  \Vert \nabla^hv_{\varepsilon}^{app} (\cdot,0 ) \Vert _{L^{2} (\mathbb{R}^{+},L_{h}^{2} (\mathbb{R}^{2} ) )} \, .
$$
 This is done in the following proposition, which concludes the proof of Proposition~\ref{estimateH}.
   \end{proof}

\begin{prop}\label{estimateat0}
{\sl For all $\delta >0$ there are  positive constants $\varepsilon(\delta )$ and $c (\delta )$ such that for all~$0<\varepsilon\leq\varepsilon(\delta )$, if $ \Vert u_{\varepsilon,0} (\cdot, 0 ) \Vert _{L_{h}^{2}}\leq c (\delta )$ then 
$$
 \Vert v_{\varepsilon}^{app} ( \cdot,0) \Vert _{L^{\infty} (\mathbb{R}^{+},L_{h}^{2} (\mathbb{R}^{2} ) )} +  \Vert \nabla^hv_{\varepsilon}^{app} (\cdot, 0 ) \Vert _{L^{2} (\mathbb{R}^{+},L_{h}^{2} (\mathbb{R}^{2} ) )}\leq\delta\,. 
$$
}
 \end{prop}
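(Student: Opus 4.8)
The plan is to estimate separately the three contributions to $v_\eps^{app}(\cdot,0) = \bigl(\overline v_\eps^h(\cdot,0),0\bigr) + \bigl(w_\eps^h(\cdot,0),w_\eps^3(\cdot,0)\bigr)$. For the first one, note that the system ${\rm(NS2D)}_{x_3}$ is solved slice by slice, so $\overline v_\eps^h(\cdot,\cdot,0)$ is exactly the solution of the two-dimensional Navier-Stokes equations with initial data $\overline v_{\eps,0}^h(\cdot,0) = \P_h\bigl(v^h_{\eps,0}(\cdot,0)\bigr)$; since the horizontal Leray projector acts only on the horizontal variables it commutes with the trace at $x_3 = 0$, and as it has operator norm at most $1$ on $L^2(\R^2)$ this data has $L^2_h$ norm at most $\|v_{\eps,0}(\cdot,0)\|_{L^2_h}$. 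The two-dimensional energy equality then gives at once
$$
\|\overline v_\eps^h(\cdot,0)\|_{L^\infty(\R^+;L^2_h)}^2 + 2\,\|\nabla^h\overline v_\eps^h(\cdot,0)\|_{L^2(\R^+;L^2_h)}^2 \le \|v_{\eps,0}(\cdot,0)\|_{L^2_h}^2 ,
$$
which is $\le c(\delta)^2$ once $\|v_{\eps,0}(\cdot,0)\|_{L^2_h}\le c(\delta)$. For $w_\eps^h(\cdot,0)$ nothing new is needed: restriction to $x_3 = 0$ only costs the $L^\infty_v$ norm, and since $\eps^{-1}w_\eps^h$ is bounded in $L^\infty(\R^+;L^\infty_vL^2_h)\cap L^2(\R^+;L^\infty_v\dot H^1_h)$ by Proposition~\ref{estimatesw} one gets $\|w_\eps^h(\cdot,0)\|_{L^\infty(\R^+;L^2_h)} + \|\nabla^h w_\eps^h(\cdot,0)\|_{L^2(\R^+;L^2_h)} \lesssim \eps$.

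The delicate term is $w_\eps^3(\cdot,0)$, because Proposition~\ref{estimatesw} bounds $w_\eps^3$ itself, not $\eps^{-1}w_\eps^3$, in the $L^\infty_v$-type spaces, so restriction at $x_3 = 0$ produces no smallness. Instead I would run an $L^2_h$ energy estimate on the third line of~${\rm(T)}$ evaluated at $x_3 = 0$,
$$
\partial_t w_\eps^3(\cdot,0) + \overline v_\eps^h(\cdot,0)\cdot\nabla^h w_\eps^3(\cdot,0) - \Delta_h w_\eps^3(\cdot,0) = \partial_3^2 w_\eps^3(\cdot,0) - \partial_3 q_\eps(\cdot,0) .
$$
Multiplying by $w_\eps^3(\cdot,0)$ and integrating over $\R^2_h$ and over $[0,t]$, the transport term drops since $\dive_h\overline v_\eps^h = 0$, and the initial contribution equals $\tfrac12\|w_{\eps,0}^3(\cdot,0)\|_{L^2_h}^2 = \tfrac12\|v_{\eps,0}^3(\cdot,0)\|_{L^2_h}^2 \le \tfrac12 c(\delta)^2$ (recall $w_{\eps,0}^3 = v_{\eps,0}^3$); there remains the bilinear term $\int_0^t\!\int_{\R^2_h} w_\eps^3(\cdot,0)\bigl(\partial_3^2 w_\eps^3(\cdot,0) - \partial_3 q_\eps(\cdot,0)\bigr)$, bounded by Cauchy-Schwarz by $\|w_\eps^3(\cdot,0)\|_{L^2(\R^+;L^2_h)}$ times $\|\partial_3^2 w_\eps^3(\cdot,0)\|_{L^2(\R^+;L^2_h)} + \|\partial_3 q_\eps(\cdot,0)\|_{L^2(\R^+;L^2_h)}$. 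Here $\|w_\eps^3(\cdot,0)\|_{L^2(\R^+;L^2_h)} \le \|w_\eps^3\|_{L^2(\R^+;L^\infty_vL^2_h)} \lesssim 1$ by Proposition~\ref{estimatesw}, and the one-dimensional trace inequality $\|g(\cdot,0)\|_{L^2_h}^2 \le 2\,\|g\|_{L^2(\R^3)}\|\partial_3 g\|_{L^2(\R^3)}$ together with the bounds $\|\partial_3^k w_\eps\|_{L^2(\R^+;L^2(\R^3))} \lesssim \eps^{k-\frac12}$ (Proposition~\ref{estimatesw} with $\alpha = (0,0,k)$) yields $\|\partial_3^2 w_\eps^3(\cdot,0)\|_{L^2(\R^+;L^2_h)} \lesssim \eps^2$.

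The real obstacle is the pressure term $\partial_3 q_\eps(\cdot,0)$. From~${\rm(T)}$ one has $-\Delta q_\eps = \dive\bigl(\overline v_\eps^h\cdot\nabla^h w_\eps\bigr)$, and a short computation using $\dive_h\overline v_\eps^h = 0$ and $\dive w_\eps = 0$ puts this in divergence form, $-\Delta q_\eps = \dive_h\bigl((w_\eps\cdot\nabla)\overline v_\eps^h\bigr)$, so that $\partial_3^j q_\eps = -\Delta^{-1}\partial_3^j\dive_h\bigl((w_\eps\cdot\nabla)\overline v_\eps^h\bigr)$. The point is that the right-hand side carries the small prefactors $w_\eps^h = O(\eps)$ (Proposition~\ref{estimatesw}) and $\partial_3\overline v_\eps^h = O(\eps)$ (Proposition~\ref{estimatesoverlinev}), while the Fourier multipliers of $\partial_3\Delta^{-1}\dive_h$ and of $\partial_3^2\Delta^{-1}\dive_h$ are bounded respectively by $\tfrac12$ and by $|\xi_h|$; inserting the corresponding product estimates for $(w_\eps\cdot\nabla)\overline v_\eps^h$ --- placing the $O(\eps)$ factor in $L^\infty_vL^\infty_h$ via two-dimensional Sobolev embeddings and the other factor in $L^2(\R^3)$, and using Propositions~\ref{estimatesoverlinev} and~\ref{estimatesw} --- one obtains $\|\partial_3 q_\eps\|_{L^2(\R^+;L^2(\R^3))} \lesssim \eps^{\frac12}$ and $\|\partial_3^2 q_\eps\|_{L^2(\R^+;L^2(\R^3))} \lesssim \eps^{\frac12}$, whence $\|\partial_3 q_\eps(\cdot,0)\|_{L^2(\R^+;L^2_h)} \lesssim \eps^{\frac12}$ by the trace inequality. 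Collecting the three pieces gives $\|v_\eps^{app}(\cdot,0)\|_{L^\infty(\R^+;L^2_h)}^2 + \|\nabla^h v_\eps^{app}(\cdot,0)\|_{L^2(\R^+;L^2_h)}^2 \lesssim c(\delta)^2 + \eps^{\frac12}$, and the proposition follows by choosing first $c(\delta)$ and then $\eps(\delta)$ small enough. The bulk of the work is thus the bookkeeping in this pressure estimate --- tracking where the extra $x_3$- and $x_h$-derivatives land while retaining a positive power of $\eps$; everything else is a direct application of the already established Propositions~\ref{estimatesoverlinev} and~\ref{estimatesw} and of standard two-dimensional Sobolev and energy inequalities.
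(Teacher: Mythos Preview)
Your proof is correct, but it takes a somewhat different path from the paper for the term $w_\eps^3(\cdot,0)$. The paper does not run the $L^2_h$ energy estimate directly; instead it interpolates $L^2_h$ between $\dot H^{-1/2}_h$ (where $w_\eps^3$ and $\nabla^h w_\eps^3$ are already uniformly bounded by Proposition~\ref{estimatesw}) and $\dot H^{1/2}_h$, and then performs a $\dot H^{1/2}_h$ energy estimate at $x_3=0$. In that norm the transport term $\overline v_\eps^h\cdot\nabla^h w_\eps^3$ does \emph{not} vanish and has to be absorbed via the bound $|\langle \overline v_\eps^h\cdot\nabla^h w_\eps^3,w_\eps^3\rangle_{\dot H^{1/2}_h}| \lesssim \|\nabla^h\overline v_\eps^h\|_{L^2_h}\|\nabla^h w_\eps^3\|_{\dot H^{1/2}_h}\|w_\eps^3\|_{\dot H^{1/2}_h}$ together with the smallness of $\|\overline v^h_{\eps,0}(\cdot,0)\|_{L^2_h}$. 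Your $L^2_h$ route is more elementary in that the transport term drops for free; the price is that you must show $T_\eps(\cdot,0)=\partial_3^2 w_\eps^3(\cdot,0)-\partial_3 q_\eps(\cdot,0)$ is small in the stronger norm $L^2(\R^+;L^2_h)$ rather than $L^2(\R^+;\dot H^{-1/2}_h)$, and you must use the a~priori bound $\|w_\eps^3(\cdot,0)\|_{L^2(\R^+;L^2_h)}\lesssim 1$ from Proposition~\ref{estimatesw} (with $s=0$) to close. For the pressure you also use a slightly different identity, $-\Delta q_\eps = \dive_h\bigl((w_\eps\cdot\nabla)\overline v_\eps^h\bigr)$, in place of the paper's $-\Delta q_\eps = \dive\,\dive_h(\overline v_\eps^h\otimes w_\eps)$; both lead to the needed powers of $\eps$ after the trace inequality and Propositions~\ref{estimatesoverlinev}--\ref{estimatesw}. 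In short, both arguments work; yours trades the $\dot H^{1/2}_h$ commutator lemma for a bit more bookkeeping in the pressure estimate.
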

  \begin{proof}   First, we estimate~$\overline v^h_{\varepsilon}$ and~$w_{\varepsilon}^h$. 
For all $\varepsilon>0$, an energy estimate in $L_{h}^{2}$ gives
\begin{equation}\label{energyx30}
\dfrac{1}{2} \Vert \overline v^h_{\varepsilon} (t,\cdot,0 ) \Vert _{L_{h}^{2}}^{2}+\displaystyle \int_{0}^{t} \Vert \nabla^h\overline v^h_{\varepsilon} (t',\cdot ,0 ) \Vert _{L_{h}^{2}}^{2}\mbox{d}t'=\dfrac{1}{2} \Vert v_{\varepsilon,0} (\cdot, 0 ) \Vert _{L_{h}^{2}}^{2}\,.
\end{equation}
Then, for all  $\delta >0$ there is a constant $c (\delta )$ such that if $ \Vert v_{\varepsilon,0} (\cdot, 0 ) \Vert _{L_{h}^{2}}\leq c (\delta )$ then 
$$
 \Vert \overline v^h_{\varepsilon} ( \cdot,0 ) \Vert _{L^{\infty} (\mathbb{R}^{+},L_{h}^{2} (\mathbb{R}^{2} ) )}
 + \Vert \nabla^h \overline v^h_{\varepsilon} (\cdot, 0 ) \Vert _{L^{2} (\mathbb{R}^{+},L_{h}^{2} (\mathbb{R}^{2} ) )}
 \leq\delta\,.
$$
Moreover, by Proposition~\ref{estimatesw} we have 
$$
 \Vert w_{\varepsilon}^{h} ( \cdot, 0 ) \Vert _{L^{\infty} (\mathbb{R}^{+},L_{h}^{2} )} +  \Vert \nabla^hw_{\varepsilon}^{h} (\cdot, 0 ) \Vert _{L^{2} (\mathbb{R}^{+},L_{h}^{2} )}\lesssim\varepsilon\,.
$$
It remains to estimate~$w_{\varepsilon}^3$.
According to Proposition~\ref{estimatesw}, $w_{\varepsilon}$  and $\nabla^hw_{\varepsilon}$ are uniformly bounded respectively in $L^{\infty}(\mathbb{R}^{+},L_{v}^{\infty}\dot{H}_{h}^{-\frac12})$ and  $L^{2}(\mathbb{R}^{+},L_{v}^{\infty}\dot{H}_{h}^{-\frac12})$, so we shall get the result by proving that for all $\delta >0$ there are  positive constants  $\varepsilon(\delta )$ and $c (\delta )$ such that if $\varepsilon\leq\varepsilon(\delta )$ and~$ \Vert u_{\varepsilon,0} (\cdot,0) \Vert _{L_{h}^{2}}\leq c (\delta )$ then 
$$
 \Vert w_{\varepsilon}^{3} (\cdot,0) \Vert _{L^{\infty} (\mathbb{R}^{+},\dot{H}_{h}^{\frac12} )}+  \Vert \nabla^hw_{\varepsilon}^{3} (\cdot,0) \Vert _{L^{2} (\mathbb{R}^{+},\dot{H}_{h}^{\frac12} )}\leq\delta\,.
$$
Recall that $w_{\varepsilon}^{3}$ satisfies 
$$
\left\{ \begin{array}{l}
\partial_{t}w_{\varepsilon}^{3}+\overline v^h_{\varepsilon}\cdot\nabla^hw_{\varepsilon}^{3}-\Delta_h w_{\varepsilon}^{3}  = \partial_{3}^{2}w_{\varepsilon}^{3}- \partial_{3}q_{\varepsilon}\\[2mm]
w_{\varepsilon|t=0}^{3} =w_{\varepsilon,0}^{3}\,.\end{array}\right.
$$
Define~$
T_{\varepsilon} \eqdefa \partial_{3}^{2}w_{\varepsilon}^{3}-\partial_{3}q_{\varepsilon}\, .
$
An energy estimate in $\dot{H}_{h}^{\frac12}$ gives
\begin{equation}\label{energyweps}
\begin{aligned}
\displaystyle &  \Vert w_{\varepsilon}^{3}\left(t,0\right) \Vert _{\dot{H}_{h}^{\frac12}}^{2} +\int_{0}^{t} \Vert \nabla^hw_{\varepsilon}^{3}\left(t',0\right) \Vert _{\dot{H}_{h}^{\frac12}}^{2}\mbox{d}t'\\[2mm]
&\,  \lesssim    \Vert w_{\varepsilon,0}^{3} (\cdot,0) \Vert _{\dot{H}_{h}^{\frac12}}^{2}+ \Vert  T_{\varepsilon} (\cdot,0) \Vert _{L^{2} (\mathbb{R}^{+},\dot{H}_{h}^{-\frac12} )}^{2}+{\displaystyle \int_{0}^{t} \big | \langle \overline v^h_{\varepsilon}\cdot\nabla^hw_{\varepsilon}^{3},w_{\varepsilon}^{3} \rangle _{\dot{H}_{h}^{\frac12}} \big| (t',0 )\mbox{d}t'\,.}\end{aligned}
\end{equation}
Using~\cite[Lemma 1.1]{chemin10}  we get for each fixed~$x_3$
$$
 \big| \langle  \overline v^h_{\varepsilon}\cdot\nabla^hw_{\varepsilon}^{3},w_{\varepsilon}^{3} \rangle _{\dot{H}_{h}^{\frac12}} (x_3)\big|\lesssim \Vert \nabla^h \overline v^h_{\varepsilon} (x_3)\Vert _{L_{h}^{2}} \Vert \nabla^hw_{\varepsilon}^{3} (x_3)\Vert _{\dot{H}_{h}^{\frac12}} \Vert w_{\varepsilon}^{3} (x_3)\Vert _{\dot H_{h}^{\frac12}}.
$$
In particular, using~(\ref{energyx30}), we get 
$$
\begin{array}[t]{l}
{\displaystyle \int_{0}^{t}\big| \langle  \overline v^h_{\varepsilon}\cdot\nabla^hw_{\varepsilon}^{3},w_{\varepsilon}^{3} \rangle _{\dot{H}_{h}^{\frac12}}\left(t',0\right)\big|\mbox{d}t'}\\[2mm]
\qquad\qquad\quad\lesssim \Vert \nabla^h \overline v^h_{\varepsilon} (\cdot,0) \Vert _{L^{2} (\mathbb{R}^{+},L_{h}^{2} )} \Vert \nabla^hw_{\varepsilon}^{3} (\cdot,0) \Vert _{L^{2} (\mathbb{R}^{+},\dot{H}_{h}^{\frac12} )} \Vert w_{\varepsilon}^{3} (\cdot,0) \Vert _{L^{\infty} (\mathbb{R}^{+},\dot H_{h}^{\frac12} )}\\[2mm]
\qquad\quad\qquad\lesssim \Vert  \overline v^h_{\varepsilon,0} (\cdot,0) \Vert _{L_{h}^{2}} \Vert \nabla^hw_{\varepsilon}^{3} (\cdot,0) \Vert _{L^{2} (\mathbb{R}^{+},\dot{H}_{h}^{\frac12} )} \Vert w_{\varepsilon}^{3} (\cdot,0) \Vert _{L^{\infty} (\mathbb{R}^{+},\dot{H}_{h}^{\frac12} )}\\[2mm]
\end{array}
$$
Then we infer that
$$
\longformule{
  \int_{0}^{t}\big| \langle  \overline v^h_{\varepsilon}\cdot\nabla^hw_{\varepsilon}^{3},w_{\varepsilon}^{3} \rangle _{\dot{H}_{h}^{\frac12}}\left(t',\cdot,0\right)\big|\mbox{d}t' \lesssim \Vert u_{\varepsilon,0} (\cdot,0) \Vert _{L_{h}}
  }
  {{}\times
   \Big( \Vert \nabla^hw_{\varepsilon}^{3} (\cdot, 0 ) \Vert _{L^{2} (\mathbb{R}^{+},\dot{H}_{h}^{\frac12} )}^{2}+ \Vert w_{\varepsilon}^{3} (\cdot, 0 ) \Vert _{L^{\infty} (\mathbb{R}^{+},\dot{H}_{h}^{\frac12} )}^{2} \Big)\,. 
   }
$$
Plugging this inequality into~(\ref{energyweps}) we obtain that   there is a constant $C$ such that
$$
\begin{aligned}
  \Vert w_{\varepsilon}^{3} (\cdot,0) \Vert _{L^{\infty} (\mathbb{R}^{+},\dot{H}_{h}^{\frac12} )}^{2} & + \big(1-C \Vert u_{\varepsilon,0} (\cdot,0) \Vert _{L_{h}^{2}}\big)  \Vert \nabla^hw_{\varepsilon}^{3} (\cdot,0) \Vert _{L^{2} (\mathbb{R}^{+},\dot{H}_{h}^{\frac12} )}^{2} \\ 
& \lesssim \Vert w_{\varepsilon,0}^{3} (\cdot,0) \Vert _{\dot{H}_{h}^{\frac12}}^{2}+  \Vert T_{\varepsilon} (\cdot,0) \Vert _{L^{2} (\mathbb{R}^{+},\dot{H}_{h}^{-\frac12} )}^{2}\\ 
& \lesssim \Vert w_{\varepsilon,0}^{3} (\cdot,0) \Vert _{L_{h}^{2}} \Vert w_{\varepsilon,0}^{3} (\cdot,0) \Vert _{\dot{H}_{h}^{1}}+ \Vert T_{\varepsilon} (\cdot,0) \Vert _{L^{2} (\mathbb{R}^{+},\dot{H}_{h}^{-\frac12} )}^{2}\,.\end{aligned}
$$
As $w_{\varepsilon,0}$ is uniformly bounded in $L_{v}^{\infty}\dot{H}_{h}^{1}$, it remains to prove that 
$$
\lim_{\varepsilon\rightarrow0}  \Vert T_{\varepsilon}  (\cdot,0) \Vert _{L^{2} (\mathbb{R}^{+},\dot{H}_{h}^{-\frac12}  )}=0 \, .
$$
As $\partial_{3}^{2}w_{\varepsilon}^{3}=-\partial_{3}\mbox{div}_{h}w_{\varepsilon}^{h}$, we get
$$
 \Vert \partial_{3}^{2}w_{\varepsilon}^{3} (\cdot,0) \Vert _{L^{2} (\mathbb{R}^{+},\dot{H}_{h}^{-\frac12} )}\leq \Vert \partial_{3}\nabla^hw_{\varepsilon}^{h} (\cdot,0) \Vert _{L^{2} (\mathbb{R}^{+},\dot{H}_{h}^{-\frac12} )}\leq \Vert \partial_{3}\nabla^hw_{\varepsilon}^{h} \Vert _{L^{2} (\mathbb{R}^{+},L_{v}^{\infty}\dot{H}_{h}^{-\frac12} )}\,.
$$
The bounds on $w_{\varepsilon}$ given in Proposition~\ref{estimatesw} along with the Gagliardo-Nirenberg inequality lead to
$$
 \Vert \partial_{3}^{2}w_{\varepsilon}^{3} (\cdot,0) \Vert _{L^{2} (\mathbb{R}^{+},\dot{H}_{h}^{-\frac12} )} \begin{array}[t]{l}
\leq \Vert \partial_{3}\nabla^hw_{\varepsilon}^{h} \Vert _{L^{2} (\mathbb{R}^{+},L_{v}^{2}\dot{H}_{h}^{-\frac12} )}^\frac12 \Vert \partial_{3}^{2}\nabla^hw_{\varepsilon}^{h} \Vert _{L^{2} (\mathbb{R}^{+},L_{v}^{2}\dot{H}_{h}^{-\frac12} )}^\frac12\\
\lesssim\varepsilon^{2}\,.\end{array}
$$
Now  let us turn to the pressure term. Recall that
$$
-\Delta q_{\varepsilon} =\mbox{div} \, N_{\varepsilon}  \, ,
\quad \mbox{with} \quad  N_{\varepsilon}¬†  \eqdefa \overline v^h_{\varepsilon}\cdot\nabla^hw_{\varepsilon} = \mbox{div}_h \, (\overline v^h_{\varepsilon} \otimes w_{\varepsilon} ) 
$$
since~$\overline v^h_{\varepsilon}$ is divergence free. To estimate~$\partial_3  q_{\varepsilon}(\cdot,0)$ we   use Gagliardo-Nirenberg's inequality, according to which it suffices to estimate~$\partial_3  q_{\varepsilon}$ in~$L^2_v$ and in~$\dot H^1_v$.

Since~$(-\Delta) ^{-1} \mbox{div}_h \,  \mbox{div}$ is a zero order Fourier multiplier, we have
$$
  \Vert \partial_{3}q_{\varepsilon} \Vert _{L^{2} (\mathbb{R}^{+},H^{-\frac 1 2,1} )}\lesssim \Vert \partial_3 (\overline v^h_{\varepsilon} \otimes w_{\varepsilon} ) \Vert _{L^{2} (\mathbb{R}^{+},H^{-\frac 1 2,1} )}\,.
$$
On the one hand we write
$$
\begin{aligned}
\| w_{\varepsilon} \partial_3 \overline v^h_{\varepsilon}\|_{L^{2} (\mathbb{R}^{+},L_{v}^{2}\dot{H}_{h}^{-\frac12} )} & \lesssim
 \|  w_{\varepsilon} \|_{L^{2} (\mathbb{R}^{+},L_{v}^{2}\dot H_{h}^{\frac12 }) }
 \|  \partial_3 \overline v^h_{\varepsilon}\|_{L^{\infty} (\mathbb{R}^{+},L_{v}^{\infty}L_{h}^{2 }) }
  & \lesssim \eps^\frac12
\end{aligned}
$$
by Propositions~\ref{estimatesoverlinev} and~\ref{estimatesw}, and similarly
$$
\begin{aligned}
\|  \overline v^h_{\varepsilon}\partial_3 w_{\varepsilon}\|_{L^{2} (\mathbb{R}^{+},L_{v}^{2}\dot{H}_{h}^{-\frac12} )} & \lesssim
 \|  \partial_3 w_{\varepsilon} \|_{L^{2} (\mathbb{R}^{+},L_{v}^{2}\dot H_{h}^{\frac12 }) }
 \|  \overline v^h_{\varepsilon}\|_{L^{\infty} (\mathbb{R}^{+},L_{v}^{\infty}L_{h}^{2 }) }
  & \lesssim \eps^\frac12 \, .
\end{aligned}
$$
In the same way we find that
$$
\begin{aligned}
\| \partial_3(  \overline v^h_{\varepsilon} \otimes w_{\varepsilon})\|_{L^{2} (\mathbb{R}^{+},\dot H^{-\frac 1 2,1} )}   \lesssim  \eps^\frac32 \,  \end{aligned}.
$$
This ends  the proof of Proposition~\ref{estimateat0}.
     \end{proof}

\section{Estimates on the linear transport-diffusion equation}
 \label{appendix}

\setcounter{equation}{0}
In this appendix we shall prove Proposition~\ref{estimatesw}. It turns out to be convenient to rescale~$w_\eps$. Thus we define the
  vector field 
$$
W_{\varepsilon}(t,x)\eqdefa\big(\dfrac{w_{\varepsilon}^{h}}{\varepsilon},w_{\varepsilon}^{3} \big) (t,x_h,\varepsilon^{-1}x_3 )
$$ 
which satisfies
$$
 \left\{ \begin{array}{rl}
\partial_{t}W_{\varepsilon}+\overline V^h_{\varepsilon}\cdot\nabla^hW_{\varepsilon}-\Delta_{h}W_{\varepsilon}-\varepsilon^{2}\,\partial_{3}^{2}W_{\varepsilon} & =-\left(\nabla^hQ_{\varepsilon},\varepsilon^{2}\,\partial_{3}Q_{\varepsilon}\right)\\\mbox{div}\, W_{\varepsilon} & =0\\
W_{\varepsilon}\left(0,.\right) & =W_{\varepsilon ,0}\end{array}\right.
$$
where 
$$
 \overline V^h_{\varepsilon} (t,x)\eqdefa \overline v^h_{\varepsilon}(t,x_h,\varepsilon^{-1}x_3 ) \qquad\mbox{and}\qquad Q_{\varepsilon} (t,x)\eqdefa\varepsilon^{-1}q_{\varepsilon}  (t,x_h,\varepsilon^{-1}x_3 ) \, .
$$
Note that thanks to Proposition~\ref{estimatesoverlinev}, the vector field~$\partial^\alpha \overline V^h_{\varepsilon}$ is uniformly bounded in the space~$L^{\infty} (\mathbb{R}^{+},L_{v}^{2}\dot{H}_{h}^{s} )\cap L^{2} (\mathbb{R}^{+},L_{v}^{2}\dot{H}_{h}^{s+1} )$ for each~$\alpha \in {\mathbb N}^3$ and any~$s>-1$, and hence also in~$L^{\infty} (\mathbb{R}^{+},L_{v}^{\infty}\dot{H}_{h}^{s} )\cap L^{2} (\mathbb{R}^{+},L_{v}^{\infty}\dot{H}_{h}^{s+1} )$.

Similary we have defined
$$
W_{\varepsilon ,0}(x) \eqdefa \big(\dfrac{w_{\varepsilon,0}^{h}}{\varepsilon},w_{\varepsilon,0}^{3} \big) (x_h,\varepsilon^{-1}x_3 )
$$
and by construction it is bounded in~$\dot H^s(\R^3)$ for all~$ s \geq -1$.

Proposition~\ref{estimatesw} is a corollary of the next statement.
\begin{prop}
\label{propositionWeps}
{\sl
Under the assumptions of Theorem~{\rm\ref{spectral}}, the following results hold.
\begin{enumerate} 
\item\label{firstitempropositionWeps} For all~$s>-1$, and all $\alpha\in\mathbb{N}^{3}$,
 $\partial^{\alpha}W_{\varepsilon}$ is bounded in~$L^{\infty} (\mathbb{R}^{+},L_{v}^{2}\dot{H}_{h}^{s} )\cap L^{2} (\mathbb{R}^{+},L_{v}^{2}\dot{H}_{h}^{s+1} )$;  in particular  $\partial^{\alpha}W_{\varepsilon}$ is bounded in  $L^{\infty} (\mathbb{R}^{+},L_{v}^{\infty}\dot{H}_{h}^{s}  )\cap L^{2} (\mathbb{R}^{+},L_{v}^{\infty}\dot{H}_{h}^{s+1} )$.
 \item For all $\alpha\in\mathbb{N}^{3}$,
 $\partial^{\alpha}W_{\varepsilon}$ is bounded in  $L^{2} (\mathbb{R}^{+},L^{2} )$, hence in particular in $L^{2} (\mathbb{R}^{+},L_{v}^{\infty}L^2_h)$.
 \end{enumerate}
 }
 \end{prop}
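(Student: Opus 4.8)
The plan is to establish both assertions by $L^2(\R_v;\dot H^s(\R^2_h))$ energy estimates carried out directly on the rescaled system, applied to $\partial^\alpha W_\varepsilon$ and organised as an induction on $|\alpha|$; assertion (2) will be obtained as the endpoint $s=-1$ of the very same computation, which is precisely where the spectral anisotropy of the data enters.

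For assertion (1), fix $s>-1$ and $\alpha$, apply $\partial^\alpha$ to the equation satisfied by $W_\varepsilon$, and take the $L^2_v\dot H^s_h$ scalar product with $\partial^\alpha W_\varepsilon$. The time derivative yields $\frac12\frac{d}{dt}\|\partial^\alpha W_\varepsilon\|_{L^2_v\dot H^s_h}^2$ and the Laplacian yields the good term $\|\partial^\alpha\nabla^h W_\varepsilon\|_{L^2_v\dot H^s_h}^2+\varepsilon^2\|\partial^\alpha\partial_3 W_\varepsilon\|_{L^2_v\dot H^s_h}^2$. In the transport term one expands $\partial^\alpha(\overline V^h_\varepsilon\cdot\nabla^h W_\varepsilon)$ by the Leibniz formula. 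The top contribution $\overline V^h_\varepsilon\cdot\nabla^h\partial^\alpha W_\varepsilon$, tested against $\partial^\alpha W_\varepsilon$, vanishes for $s=0$ because $\dive_h\overline V^h_\varepsilon=0$, and is otherwise a horizontal commutator bounded, by a two-dimensional product estimate of the Chemin type already used in this paper, by $\|\nabla^h\overline V^h_\varepsilon\|_{L^\infty_v\dot H^\sigma_h}\|\partial^\alpha W_\varepsilon\|_{L^2_v\dot H^s_h}\|\partial^\alpha\nabla^h W_\varepsilon\|_{L^2_v\dot H^s_h}$ for a suitable $\sigma$; the last factor is absorbed by the Laplacian, and the prefactor $\|\nabla^h\overline V^h_\varepsilon\|_{L^\infty_v\dot H^\sigma_h}^2$ is time-integrable by (the rescaled) Proposition~\ref{estimatesoverlinev}. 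The remaining Leibniz terms, carrying at least one derivative on $\overline V^h_\varepsilon$, are controlled by Proposition~\ref{estimatesoverlinev} and by the induction hypothesis applied to $\partial^\beta W_\varepsilon$, $|\beta|<|\alpha|$.

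The pressure term is, I expect, the main obstacle. Since $W_\varepsilon$ is divergence free but not divergence free for the anisotropic divergence $\dive_h+\varepsilon^2\partial_3$, the pairing $\langle(\nabla^h\partial^\alpha Q_\varepsilon,\varepsilon^2\partial_3\partial^\alpha Q_\varepsilon),\partial^\alpha W_\varepsilon\rangle_{L^2_v\dot H^s_h}$ does not vanish: integrating by parts and using $\dive W_\varepsilon=0$ it equals $-(1-\varepsilon^2)\langle\partial^\alpha Q_\varepsilon,\dive_h\partial^\alpha W^h_\varepsilon\rangle_{L^2_v\dot H^s_h}$. Taking the divergence of the equation gives $-(\Delta_h+\varepsilon^2\partial_3^2)Q_\varepsilon=\dive(\overline V^h_\varepsilon\cdot\nabla^h W_\varepsilon)$, and I would exploit two facts: by Plancherel the symbol of $(\Delta_h+\varepsilon^2\partial_3^2)^{-1}$ is dominated uniformly in $\varepsilon$ by that of $(-\Delta_h)^{-1}$, so that $\|Q_\varepsilon\|_{L^2_v\dot H^{s}_h}\lesssim\|\dive(\overline V^h_\varepsilon\cdot\nabla^h W_\varepsilon)\|_{L^2_v\dot H^{s-2}_h}$; and every vertical derivative occurring in that source is rewritten via $\partial_3(\overline V^j_\varepsilon W^3_\varepsilon)=(\partial_3\overline V^j_\varepsilon)W^3_\varepsilon-\overline V^j_\varepsilon\dive_h W^h_\varepsilon$, turning it into a horizontal derivative together with a factor $\partial_3\overline V^h_\varepsilon=O(\varepsilon)$ supplied by Proposition~\ref{estimatesoverlinev}. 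One is then left with purely horizontal product estimates in which $W_\varepsilon$ appears only through $\partial^\alpha\nabla^h W_\varepsilon$, so that the pressure contribution is again absorbed by the Laplacian up to a time-integrable prefactor. A Gronwall argument followed by integration in time gives the announced $L^\infty(\R^+;L^2_v\dot H^s_h)\cap L^2(\R^+;L^2_v\dot H^{s+1}_h)$ bound, and the $L^\infty_v$ version follows from the Gagliardo-Nirenberg inequality $\|f\|_{L^\infty_v\dot H^s_h}^2\le\|f\|_{L^2_v\dot H^s_h}\|\partial_3 f\|_{L^2_v\dot H^s_h}$, both factors being already under control.

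For assertion (2) I would run the same energy estimate at the endpoint $s=-1$. Although (1) is stated only for $s>-1$, this endpoint is available for $W_\varepsilon$: the datum $\partial^\alpha W_{\varepsilon,0}$ is uniformly bounded in $\dot H^{|\alpha|-1}(\R^3)$, and since the spectrum of $W_{\varepsilon,0}$ is contained in $\{|\xi_3|\le|\xi_h|\}$ --- the rescaled form of the hypothesis $|\xi_3|\le\varepsilon|\xi_h|$ on $v_{\varepsilon,0}$ --- one has $\|\partial^\alpha W_{\varepsilon,0}\|_{L^2_v\dot H^{-1}_h}\lesssim\|W_{\varepsilon,0}\|_{\dot H^{|\alpha|-1}(\R^3)}$, uniformly in $\varepsilon$; this is the point at which the spectral anisotropy is essential. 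The transport term is handled as above, now using that $\overline V^h_\varepsilon\cdot\nabla^h W_\varepsilon=\dive_h(\overline V^h_\varepsilon\otimes W_\varepsilon)$ (so it sits in $\dot H^{-1}_h$) and that $\|\overline V^h_\varepsilon\|_{L^2(\R^+;L^\infty(\R^3))}$ is finite by Proposition~\ref{estimatesoverlinev}; the pressure term is handled exactly as in the previous paragraph; and the bounds already obtained in assertion (1) take care of the lower-order pieces. At this level the dissipative term is $\|\partial^\alpha\nabla^h W_\varepsilon\|_{L^2_v\dot H^{-1}_h}^2=\|\partial^\alpha W_\varepsilon\|_{L^2(\R^3)}^2$, so integration in time yields $\partial^\alpha W_\varepsilon\in L^\infty(\R^+;L^2_v\dot H^{-1}_h)\cap L^2(\R^+;L^2(\R^3))$; the embedding into $L^2(\R^+;L^\infty_v L^2_h)$ then follows from $\|g\|_{L^\infty_v L^2_h}^2\le\|g\|_{L^2(\R^3)}\|\partial_3 g\|_{L^2(\R^3)}$ and the Cauchy-Schwarz inequality in time, using that the previous bound holds for every $\alpha$. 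The two delicate points are, as indicated, the treatment of the rescaled pressure and the verification that all the quantities entering the transport and pressure terms remain finite at the endpoint $s=-1$.
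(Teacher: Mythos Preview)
Your overall strategy coincides with the paper's: induction on $|\alpha|$, $L^2_v\dot H^s_h$ energy estimates, the reduction of the pressure pairing to $(\varepsilon^2-1)\langle\nabla^h Q_\varepsilon,W^h_\varepsilon\rangle$, the rewriting $\dive(\overline V^h_\varepsilon\cdot\nabla^h W_\varepsilon)=\dive_h M^h_\varepsilon$ with $M^h_\varepsilon=\overline V^h_\varepsilon\cdot\nabla^h W^h_\varepsilon+\partial_3(W^3_\varepsilon\overline V^h_\varepsilon)$, absorption into the dissipation plus a time-integrable Gronwall factor, and the $s=-1$ endpoint for assertion~(2). Two points deserve correction.

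First, after rescaling one has $\partial_3\overline V^h_\varepsilon=O(1)$, not $O(\varepsilon)$: since $\overline V^h_\varepsilon(t,x)=\overline v^h_\varepsilon(t,x_h,\varepsilon^{-1}x_3)$, the $\varepsilon^{-1}$ from the chain rule exactly cancels the $O(\varepsilon)$ from Proposition~\ref{estimatesoverlinev}, and indeed the paper records that $\partial^\alpha\overline V^h_\varepsilon$ is merely uniformly bounded for every $\alpha$. So the piece $(\partial_3\overline V^h_\varepsilon)W^3_\varepsilon$ in your decomposition of $M^h_\varepsilon$ is not small; it has to go into the Gronwall factor via $\|W^3_\varepsilon\partial_3\overline V^h_\varepsilon\|_{L^2_v\dot H^r_h}\lesssim\|\nabla\overline V^h_\varepsilon\|_{L^\infty_vL^2_h}\|W^3_\varepsilon\|_{L^2_v\dot H^{r+1}_h}$, exactly as the paper does. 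This is a repairable slip, not a structural gap, but your sentence ``together with a factor $\partial_3\overline V^h_\varepsilon=O(\varepsilon)$'' should be deleted. Relatedly, the paper has to split the pressure estimate into three regimes $0<s<1$, $s=0$, $-1<s<0$ with different choices of the dual exponent $r$; your single sentence ``one is then left with purely horizontal product estimates'' glosses over this.

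Second, your plan for assertion~(2) --- ``the pressure term is handled exactly as in the previous paragraph'' --- is where the argument, as you wrote it, risks failing: the product-law machinery behind the claim~(\ref{claimQ}) is calibrated to $-1<s<1$ and does not obviously survive at the endpoint $s=-1$. The paper sidesteps this by \emph{not} closing a Gronwall loop at $s=-1$. Instead it lumps transport and pressure into a single forcing $g_\varepsilon=\overline V^h_\varepsilon\cdot\nabla^h W_\varepsilon+(\nabla^h Q_\varepsilon,\varepsilon^2\partial_3 Q_\varepsilon)$ and shows $\|\partial^\alpha g_\varepsilon\|_{L^1(\R^+;L^2_v\dot H^{-1}_h)}<\infty$ using the bounds from assertion~(1) already proved (for the transport piece this is the estimate $\|\partial^\alpha(\overline V^h_\varepsilon\otimes W_\varepsilon)\|_{L^1_tL^2}\lesssim\sum_\beta\|\partial^\beta\overline V^h_\varepsilon\|_{L^2_tL^2_v\dot H^{1/2}_h}\|\partial^{\alpha-\beta}W_\varepsilon\|_{L^2_tL^\infty_v\dot H^{1/2}_h}$, not the $L^2_tL^\infty$ bound you quote, which would not propagate through the Leibniz expansion). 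With $g_\varepsilon\in L^1_t$ in hand, a sup-in-time argument on $K_\varepsilon(t)=\sup_{t'\le t}\|\partial^\alpha W_\varepsilon(t')\|_{L^2_v\dot H^{-1}_h}$ replaces Gronwall and yields the $L^2_tL^2$ bound directly. Your instinct to invoke assertion~(1) here is right; the execution should follow this $L^1_t$ route rather than attempt to replay the pressure absorption at $s=-1$.
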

  \begin{proof} Let us start by proving the first statement of the proposition. We notice that it is enough to   prove the result for $s\in ]-1,1 [$, and we shall argue by induction on $\alpha$.
 
$\bullet$ Let us start by considering the case~$\alpha = 0$. An energy estimate in~$L_{v}^{2}\dot{H}_{h}^{s}$ on the equation satisfied by~$W_{\varepsilon}$ gives
$$
\begin{array}[t]{l}
\dfrac{1}{2}\dfrac{\mbox{d}}{\mbox{d}t} \Vert W_{\varepsilon} \Vert _{L_{v}^{2}\dot{H}_{h}^{s}}^{2}+ \Vert \nabla^hW_{\varepsilon} \Vert _{L_{v}^{2}\dot{H}_{h}^{s}}^{2}+\varepsilon^{2} \Vert \partial_{3}W_{\varepsilon} \Vert _{L_{v}^{2}\dot{H}_{h}^{s}}^{2}\\[3mm]
\qquad\qquad\qquad\qquad=- \langle  \overline V^h_{\varepsilon}\cdot\nabla^hW_{\varepsilon},W_{\varepsilon} \rangle _{L_{v}^{2}\dot{H}_{h}^{s}}- \langle \nabla^hQ_{\varepsilon},W_{\varepsilon}^{h} \rangle _{L_{v}^{2}\dot{H}_{h}^{s}}- \langle \varepsilon^{2}\,\partial_{3}Q_{\varepsilon},W_{\varepsilon}^{3} \rangle _{L_{v}^{2}\dot{H}_{h}^{s}}\,.\end{array}
$$
For the non-linear term we have, by~\cite[Lemma 1.1]{chemin10} and for each given~$t$ and~$x_3$,
$$
\big | \langle \overline V^h_{\varepsilon}\cdot\nabla^hW_{\varepsilon},W_{\varepsilon} \rangle_{\dot{H}_{h}^{s}} (t,x_3) \big | \begin{array}[t]{l}
\lesssim \Vert \nabla^h \overline V^h_{\varepsilon}  (t,x_3)\Vert _{L_{h}^{2}} \Vert \nabla^hW_{\varepsilon}  (t,x_3)\Vert _{\dot{H}_{h}^{s}} \Vert W_{\varepsilon}  (t,x_3)\Vert _{\dot{H}_{h}^{s}}\\[2mm]
\leq\dfrac{1}{4} \Vert \nabla^hW_{\varepsilon} (t,x_3) \Vert _{\dot{H}_{h}^{s}}^{2}+C \Vert \nabla^h\overline V^h_{\varepsilon} (t,x_3) \Vert _{L_{h}^{2}}^{2} \Vert W_{\varepsilon}  (t,x_3)\Vert _{\dot{H}_{h}^{s}}^{2}\end{array}
$$
so after integration over~$x_3$, we find
$$
\begin{array}[t]{l}
\dfrac{1}{2}\dfrac{\mbox{d}}{\mbox{d}t}\Vert  W_{\varepsilon}\Vert   _{L_{v}^{2}\dot{H}_{h}^{s}}^{2}+\dfrac{3}{4}\Vert  \nabla^hW_{\varepsilon}\Vert   _{L_{v}^{2}\dot{H}_{h}^{s}}^{2}+\varepsilon^{2}\Vert  \partial_{3}W_{\varepsilon}\Vert   _{L_{v}^{2}\dot{H}_{h}^{s}}^{2}\\[3mm]
\qquad\qquad\qquad\leq C\Vert  \nabla^h\overline V^h_{\varepsilon}\Vert   _{L_{v}^{\infty}L_{h}^{2}}^{2}\Vert  W_{\varepsilon}\Vert   _{L_{v}^{2}\dot{H}_{h}^{s}}^{2}- \langle \nabla^hQ_{\varepsilon},W_{\varepsilon}^{h} \rangle _{L_{v}^{2}\dot{H}_{h}^{s}}- \langle \varepsilon^{2}\,\partial_{3}Q_{\varepsilon},W_{\varepsilon}^{3} \rangle _{L_{v}^{2}\dot{H}_{h}^{s}}\,.\end{array}
$$
Now let us study the pressure term.
As $W_{\varepsilon}$ is a divergence free vector field we have
$$
- \langle \nabla^hQ_{\varepsilon},W_{\varepsilon}^{h} \rangle _{L_{v}^{2}\dot{H}_{h}^{s}}- \langle \varepsilon^{2}\,\partial_{3}Q_{\varepsilon},W_{\varepsilon}^{3} \rangle _{L_{v}^{2}\dot{H}_{h}^{s}}= (\varepsilon^{2}-1 ) \langle \nabla^hQ_{\varepsilon},W_{\varepsilon}^{h} \rangle _{L_{v}^{2}\dot{H}_{h}^{s}}\,.
$$
We claim that 
\begin{equation}\label{claimQ}
 \big | \langle \nabla^hQ_{\varepsilon} (t) ,W_{\varepsilon}^{h}  (t) \rangle _{L_{v}^{2}\dot{H}_{h}^{s}} \big  |\leq\dfrac{1}{4} \Vert \nabla^hW_{\varepsilon} (t)  \Vert _{L_{v}^{2}\dot{H}_{h}^{s}}^{2}+C_{\varepsilon} (t)   \Vert W_{\varepsilon} (t) \Vert _{L_{v}^{2}\dot{H}_{h}^{s}}^{2}
\end{equation}
where $C_{\varepsilon}$  is uniformly bounded in $L^{1} (\mathbb{R}^{+} )$. Assuming that claim to be true, we infer (up to changing~$C_{\varepsilon}$) that 
$$
\dfrac{\mbox{d}}{\mbox{d}t} \Vert W_{\varepsilon} (t) \Vert _{L_{v}^{2}\dot{H}_{h}^{s}}^{2}+ \Vert \nabla^hW_{\varepsilon} (t) \Vert _{L_{v}^{2}\dot{H}_{h}^{s}}^{2}+\varepsilon^{2} \Vert \partial_{3}W_{\varepsilon} (t) \Vert _{L_{v}^{2}\dot{H}_{h}^{s}}^{2}\lesssim C_{\varepsilon} (t)  \Vert W_{\varepsilon} (t) \Vert _{L_{v}^{2}\dot{H}_{h}^{s}}^{2}\,.
$$
Thanks to Gronwall's lemma this gives
$$
 \Vert W_{\varepsilon} (t) \Vert _{L_{v}^{2}\dot{H}_{h}^{s}}^{2} +\int_{0}^{t} \Vert \nabla^hW_{\varepsilon} (t') \Vert _{L_{v}^{2}\dot{H}_{h}^{s}}^{2} \, d t'\lesssim \Vert W_{\varepsilon,0} \Vert _{L_{v}^{2}\dot{H}_{h}^{s}}^{2}\,,
$$
and the conclusion of Proposition~\ref{propositionWeps} (\ref{firstitempropositionWeps}), for~$\alpha = 0$ and~$-1<s<1$, comes from the a priori bounds on $W_{\varepsilon,0}$.
It remains to prove the claim~(\ref{claimQ}).
 For all real numbers~$r$, we have 
$$
 \big | \langle \nabla^hQ_{\varepsilon} (t) ,W_{\varepsilon}^{h}  (t) \rangle _{L_{v}^{2}\dot{H}_{h}^{s}} \big | \leq \Vert \nabla^hQ_{\varepsilon}(t)  \Vert _{L_{v}^{2}\dot{H}_{h}^{r}} \Vert W_{\varepsilon}^{h}(t)  \Vert _{L_{v}^{2}\dot{H}_{h}^{2s-r}} \, .
$$
As $W_{\varepsilon}$ is a  divergence free vector field we can write
$$
\mbox{div}\,  (\overline V^h_{\varepsilon}\cdot\nabla^hW_{\varepsilon} )=-\Delta_{h}Q_{\varepsilon}-\varepsilon^{2}\,\partial_{3}^{2}Q_{\varepsilon}\,.
$$
Then we define  
$$
M^h_{\varepsilon}\eqdefa \overline V^h_{\varepsilon}\cdot\nabla^hW_{\varepsilon}^{h}+\partial_{3} (W_{\varepsilon}^{3}\overline V^h_{\varepsilon} )
$$
and using the fact that~$\overline V^h_{\varepsilon}$ is divergence free, we have
$$
\mbox{div} (\overline V^h_{\varepsilon}\cdot\nabla^hW_{\varepsilon} )=\mbox{div}_{h} \, M^h_{\varepsilon} \,.
$$
It follows that
\begin{equation}\label{formulaQeps}
Q_{\varepsilon} = (-\Delta_{h} - \varepsilon^{2}\,\partial_{3}^{2})^{-1} \mbox{div}_{h} \, M^h_{\varepsilon}  \, ,  
\end{equation}
and since~$  \nabla^h(-\Delta_{h} - \varepsilon^{2}\,\partial_{3}^{2})^{-1} \mbox{div}_{h}$ is a zero-order Fourier multiplier, we infer that for all real numbers~$r$,
$$ \Vert \nabla^hQ_{\varepsilon} \Vert _{L_{v}^{2}\dot{H}_{h}^{r}}\leq \Vert M^h_{\varepsilon} \Vert _{L_{v}^{2}\dot{H}_{h}^{r}}\,,$$
and therefore
\begin{equation}\label{estiMW}
   | \langle \nabla^hQ_{\varepsilon} (t) ,W_{\varepsilon}^{h}  (t) \rangle _{L_{v}^{2}\dot{H}_{h}^{s}} |\leq \Vert M^h_\e(t) \Vert _{L_{v}^{2}\dot{H}_{h}^{r}} \Vert W_{\varepsilon}^{h} (t)\Vert _{L_{v}^{2}\dot{H}_{h}^{2s-r}}\,.
\end{equation}
We can estimate  $ \Vert M^h_\e \Vert _{L_{v}^{2}\dot{H}_{h}^{r}}$ as follows, thanks to the divergence-free condition on~$W_{\varepsilon}$:
$$
 \Vert M^h_\e \Vert _{L_{v}^{2}\dot{H}_{h}^{r}}\begin{array}[t]{l}
\leq \Vert \overline V^h_{\varepsilon}\cdot\nabla^hW_{\varepsilon} \Vert _{L_{v}^{2}\dot{H}_{h}^{r}}+ \Vert \partial_{3} (W_{\varepsilon}^{3}\overline V^h_{\varepsilon} ) \Vert _{L_{v}^{2}\dot{H}_{h}^{r}}\\[2mm]
\leq \Vert \overline V^h_{\varepsilon}\cdot\nabla^hW_{\varepsilon} \Vert _{L_{v}^{2}\dot{H}_{h}^{r}}+ \Vert W_{\varepsilon}^{3}\,\partial_{3}\overline V^h_{\varepsilon} \Vert _{L_{v}^{2}\dot{H}_{h}^{r}}+ \Vert \overline V^h_{\varepsilon}\,\mbox{div}_{h}W_{\varepsilon}^{h} \Vert _{L_{v}^{2}\dot{H}_{h}^{r}}\,.\end{array}
$$
Thanks to two-dimensional product laws, if $-1<r<0$ then we get
$$
 \Vert \overline V^h_{\varepsilon}\cdot\nabla^hW_{\varepsilon} \Vert _{L_{v}^{2}\dot{H}_{h}^{r}}+ \Vert \overline V^h_{\varepsilon}\,\mbox{div}_{h}W_{\varepsilon}^{h} \Vert _{L_{v}^{2}\dot{H}_{h}^{r}}\lesssim \Vert \overline V^h_{\varepsilon} \Vert _{L_{v}^{\infty}\dot{H}_{h}^{\frac12}} \Vert \nabla^hW_{\varepsilon} \Vert _{L_{v}^{2}\dot{H}_{h}^{r+ \frac12}}
$$
and
$$
 \Vert W_{\varepsilon}^{3}\partial_{3}\overline V^h_{\varepsilon} \Vert _{L_{v}^{2}\dot{H}_{h}^{r}}\lesssim \Vert \nabla \overline V^h_{\varepsilon} \Vert _{L_{v}^{\infty}L_{h}^{2}} \Vert W_{\varepsilon}^{3} \Vert _{L_{v}^{2}\dot{H}_{h}^{r+1}} \, .
$$
So  if $-1<r<0$,  then
\begin{equation}\label{estiM}
 \Vert M^h_\e \Vert _{L_{v}^{2}\dot{H}_{h}^{r}}\lesssim \Vert \overline V^h_{\varepsilon} \Vert _{L_{v}^{\infty}\dot{H}_{h}^{\frac12}} \Vert \nabla^hW_{\varepsilon} \Vert _{L_{v}^{2}\dot{H}_{h}^{r+ \frac12}}+ \Vert \nabla \overline V^h_{\varepsilon} \Vert _{L_{v}^{\infty}L_{h}^{2}} \Vert W_{\varepsilon}^{3} \Vert _{L_{v}^{2}\dot{H}_{h}^{r+1}}
 \end{equation}
and this   leads  to~(\ref{claimQ}) for $-1<s<1$, due to the following computations.

$\qquad \circ$ If $0<s<1$, we choose $r=s-1$ to get
$$
\|M^h_\e\|_{L_{v}^{2}\dot{H}_{h}^{s-1}} 
\lesssim  \Vert \overline V^h_{\varepsilon} \Vert _{L_{v}^{\infty}\dot{H}_{h}^{\frac12}} \Vert \nabla^hW_{\varepsilon}  \Vert _{L_{v}^{2}\dot{H}_{h}^{s-\frac12}}+ \Vert \nabla \overline V^h_{\varepsilon} \Vert _{L_{v}^{\infty}L_{h}^{2}} \Vert W_{\varepsilon}^{3} \Vert _{L_{v}^{2}\dot{H}_{h}^{s}} \, ,
$$
so by~(\ref{estiMW}) with~$r=s-1$,
we infer that  
\begin{equation}\label{estiQW}
\begin{aligned}
 \big | \langle \nabla^hQ_{\varepsilon}  ,W_{\varepsilon}^{h}   \rangle _{L_{v}^{2}\dot{H}_{h}^{s}} \big |&\lesssim \Vert \overline V^h_{\varepsilon} \Vert _{L_{v}^{\infty}\dot{H}_{h}^{\frac12}} \Vert \nabla^hW_{\varepsilon} \Vert _{L_{v}^{2}\dot{H}_{h}^{s-\frac12}} \Vert \nabla^hW_{\varepsilon}^{h} \Vert _{L_{v}^{2}\dot{H}_{h}^{s}} \\
 & \quad +\dfrac{1}{8} \Vert \nabla^h W_{\varepsilon}^{h} \Vert _{L_{v}^{2}\dot{H}_{h}^{s}}^{2}+C \Vert \nabla \overline V^h_{\varepsilon} \Vert _{L_{v}^{\infty}L_{h}^{2}}^{2} \Vert W_{\varepsilon}^{3} \Vert _{L_{v}^{2}\dot{H}_{h}^{s}}^{2}\,.
\end{aligned}
\end{equation}
We then use the interpolation inequality
$$
 \Vert \overline V^h_{\varepsilon} \Vert _{L_{v}^{\infty}\dot{H}_{h}^{\frac12}} \Vert \nabla^hW_{\varepsilon} \Vert _{L_{v}^{2}\dot{H}_{h}^{s-\frac12}}\lesssim  \Vert \overline V^h_{\varepsilon} \Vert _{L_{v}^{\infty}L_{h}^{2}}^{\frac12}\, \Vert \nabla^h \overline V^h_{\varepsilon} \Vert _{L_{v}^{\infty}L_{h}^{2}}^{\frac12}\, \Vert W_{\varepsilon} \Vert _{L_{v}^{2}\dot{H}_{h}^{s}}^{\frac12} \Vert \nabla^hW_{\varepsilon} \Vert _{L_{v}^{2}\dot{H}_{h}^{s}}^{\frac12}
 $$
along with the convexity inequality $ab\leq\tfrac{3}{4}a^{4/3}+\tfrac{1}{4}b^{4}$,  to get 
$$
\begin{aligned}
 \Vert \nabla^hW_{\varepsilon} \Vert _{L_{v}^{2}\dot{H}_{h}^{s}} \Vert \overline V^h_{\varepsilon} \Vert _{L_{v}^{\infty}\dot{H}_{h}^{\frac12}} \Vert \nabla^hW_{\varepsilon} \Vert _{L_{v}^{2}\dot{H}_{h}^{s-\frac12}}\leq\dfrac{1}{8} \Vert \nabla^hW_{\varepsilon} \Vert _{L_{v}^{2}\dot{H}_{h}^{s}}^{2}\\
 {}+C \Vert \overline V^h_{\varepsilon} \Vert _{L_{v}^{\infty}L_{h}^{2}}^{2} \Vert \nabla \overline V^h_{\varepsilon} \Vert _{L_{v}^{\infty}L_{h}^{2}}^{2} \Vert W_{\varepsilon} \Vert _{L_{v}^{2}\dot{H}_{h}^{s}}^{2} \, .
\end{aligned}
$$
It remains to define 
\begin{equation}\label{choiceCeps}
C_{\varepsilon} (t)\eqdefa C \Vert \nabla \overline V^h_{\varepsilon}(t)\Vert _{L_{v}^{\infty}L_{h}^{2}}^{2} \big(1+ \Vert \overline V^h_{\varepsilon}(t)\Vert _{L_{v}^{\infty}L_{h}^{2}}^{2} \big)
\end{equation}
to obtain from~(\ref{estiQW}) that
$$
 \big | \langle \nabla^hQ_{\varepsilon} (t) ,W_{\varepsilon}^{h}  (t) \rangle _{L_{v}^{2}\dot{H}_{h}^{s}} \big |\leq\dfrac{1}{4} \Vert \nabla^hW_{\varepsilon} (t)\Vert _{L_{v}^{2}\dot{H}_{h}^{s}}^{2}+C_{\varepsilon}(t) \Vert W_{\varepsilon} (t)\Vert _{L_{v}^{2}\dot{H}_{h}^{s}}^{2}\,.
$$
Notice that~$C_\eps$ belongs to~$L^1(\R^+)$ thanks to the uniform bounds on~$ \overline V^h_{\varepsilon}$ derived above from Proposition~\ref{estimatesoverlinev}.

$\qquad \circ$ If $s=0$, we choose $r=-\frac12 $ and hence by~(\ref{estiMW}) and~(\ref{estiM}),
$$
 \left | \langle \nabla^hQ_{\varepsilon}  ,W_{\varepsilon}^{h}  \rangle _{L^2}\right|\lesssim \Vert W_{\varepsilon}^{h} \Vert _{L_{v}^{2}\dot{H}_{h}^{\frac12}}\big( \Vert \overline V^h_{\varepsilon} \Vert _{L_{v}^{\infty}\dot{H}_{h}^{\frac12}} \Vert \nabla^hW_{\varepsilon} \Vert _{L^{2}}+ \Vert \nabla \overline V^h_{\varepsilon} \Vert _{L_{v}^{\infty}L_{h}^{2}} \Vert W_{\varepsilon}^{3} \Vert _{L_{v}^{2}\dot{H}_{h}^{\frac12}}\big) \, .
$$
By interpolation we infer that
$$
\longformule{
 \big | \langle \nabla^hQ_{\varepsilon}  ,W_{\varepsilon}^{h}  \rangle _{L^2} \big |\lesssim \Vert W_{\varepsilon}^{h} \Vert _{L^{2}}^{\frac12}\, \Vert \nabla^hW_{\varepsilon}^{h} \Vert _{L^{2}}^{3/2}\,  \Vert \overline V^h_{\varepsilon} \Vert _{L_{v}^{\infty}L_{h}^{2}}^{\frac12}\, \Vert \nabla \overline V^h_{\varepsilon} \Vert _{L_{v}^{\infty}L_{h}^{2}}^{\frac12} 
 }
 { {}
 + \Vert W_{\varepsilon}  \Vert _{L^{2}} \Vert \nabla^hW_{\varepsilon}  \Vert _{L^{2}} \Vert \nabla \overline V^h_{\varepsilon} \Vert _{L_{v}^{\infty}L_{h}^{2}}\,.
}
$$
The convexity inequality $ab\leq\tfrac{3}{4}a^{4/3}+\tfrac{1}{4}b^{4}$ implies that
\begin{equation}
\label{estimateW1}
\begin{split}
& \Vert W_{\varepsilon}^{h} \Vert _{L^{2}}^{\frac12}\, \Vert \nabla^hW_{\varepsilon}^{h} \Vert _{L^{2}}^{3/2}\, \Vert \overline V^h_{\varepsilon} \Vert _{L_{v}^{\infty}L_{h}^{2}}^{\frac12} \Vert \nabla \overline V^h_{\varepsilon} \Vert _{L_{v}^{\infty}L_{h}^{2}}^{\frac12}\\
&\qquad \qquad \qquad \qquad \qquad \qquad  {}   \leq\dfrac{1}{8} \Vert \nabla^hW_{\varepsilon} \Vert _{L^{2}}^{2} +C \Vert W_{\varepsilon}^{h} \Vert _{L^{2}}^{2} \Vert \overline V^h_{\varepsilon} \Vert _{L_{v}^{\infty}L_{h}^{2}}^{2} \Vert \nabla \overline V^h_{\varepsilon} \Vert _{L_{v}^{\infty}L_{h}^{2}}^{2}
 \end{split}
\end{equation}
and
\begin{equation}\label{estimateW2}
 \Vert W_{\varepsilon}  \Vert _{L^{2}} \Vert \nabla^hW_{\varepsilon} \Vert _{L^{2}} \Vert \nabla \overline V^h_{\varepsilon} \Vert _{L_{v}^{\infty}L_{h}^{2}}\leq\dfrac{1}{8} \Vert \nabla^hW_{\varepsilon} \Vert _{L^{2}}^{2}+C \Vert W_{\varepsilon} \Vert _{L^{2}}^{2}\Vert  \nabla \overline V^h_{\varepsilon}\Vert   _{L_{v}^{\infty}L_{h}^{2}}^{2} \, .
 \end{equation}
With the above choice~(\ref{choiceCeps}) for~$C_\eps$ we obtain 
$$
 \big | \langle \nabla^hQ_{\varepsilon} (t) ,W_{\varepsilon}^{h}  (t) \rangle _{L^2} \big |\leq\dfrac{1}{4}\Vert  \nabla^hW_{\varepsilon}(t)\Vert   _{L^2}^{2}+C_{\varepsilon}(t)\Vert  W_{\varepsilon}(t)\Vert   _{L^2}^{2} \, .
$$

$\qquad\circ$ Finally if $-1<s<0$, we   proceed slightly differently.
We recall that 
$$
\mbox{div}_{h} \, M^h_{\varepsilon} =-\Delta_{h}Q_{\varepsilon}-\varepsilon^{2}\,\partial_{3}^{2}Q_{\varepsilon}
$$ 
 and as $W_{\varepsilon}$ is   divergence free, we have 
$$
M^h_{\varepsilon}=\overline V^h_{\varepsilon}\cdot\nabla^hW_{\varepsilon}^{h}-\overline V^h_{\varepsilon}\,\mbox{div}_{h}W_{\varepsilon}^{h}+W_{\varepsilon}^{3}\,\partial_{3}\overline V^h_{\varepsilon} \, .
$$
Defining
$$
M^h_{\varepsilon,1} \eqdefa \mbox{div}_{h} \, (\overline V^h_{\varepsilon} \otimes W_{\varepsilon}^{h}- W_{\varepsilon}^{h} \otimes \overline V^h_{\varepsilon}) \quad \mbox{and} \quad M^h_{\varepsilon,2} \eqdefa W_{\varepsilon}\cdot \nabla \overline V^h_{\varepsilon} \, , 
$$
we can split $M^h_{\varepsilon} = M^h_{\varepsilon,1}+M^h_{\varepsilon,2}$ and estimate each term differently. 

Since~$\nabla^h (-\Delta_h - \eps^2 \partial_3^2)  \mbox{div}_{h} $ is a zero-order Fourier multiplier, 
$$
\big | \langle \nabla^hQ_{\varepsilon}  ,W_{\varepsilon}^{h}   \rangle _{L_{v}^{2}\dot{H}_{h}^{s}} \big |\leq\Vert  M^h_{\varepsilon,1}\Vert   _{L_{v}^{2}\dot{H}_{h}^{s-1}}\Vert  W_{\varepsilon}^{h}\Vert   _{L_{v}^{2}\dot{H}_{h}^{s+1}}+\Vert  M^h_{\varepsilon,2}\Vert   _{L_{v}^{2}\dot{H}_{h}^{s}}\Vert  W_{\varepsilon}^{h}\Vert   _{L_{v}^{2}\dot{H}_{h}^{s}} \, .
$$
Using two-dimensional  product laws we obtain
$$
\Vert  M^h_{\varepsilon,1}\Vert   _{L_{v}^{2}\dot{H}_{h}^{s-1}}\lesssim\Vert  \overline V^h_{\varepsilon}\, W_{\varepsilon}\Vert   _{L_{v}^{2}\dot{H}_{h}^{s}}\lesssim\Vert  \overline V^h_{\varepsilon}\Vert   _{L_{v}^{\infty}\dot{H}_{h}^{\frac12}}\Vert  W_{\varepsilon}\Vert   _{L_{v}^{2}\dot{H}_{h}^{s+ \frac12}}
$$
and
$$
\Vert  M^h_{\varepsilon,2}\Vert   _{L_{v}^{2}\dot{H}_{h}^{s}}\lesssim\Vert W_{\varepsilon} \cdot   \nabla \overline V^h_{\varepsilon}\Vert   _{L_{v}^{2}\dot{H}_{h}^{s}}\lesssim\Vert  \nabla \overline V^h_{\varepsilon}\Vert   _{L_{v}^{\infty}L^{2}}\Vert  W_{\varepsilon}\Vert   _{L_{v}^{2}\dot{H}_{h}^{s+1}} \, .
$$
Therefore, we get 
\begin{equation}\label{estis-1}
\begin{aligned}
| \langle \nabla^hQ_{\varepsilon}  ,W_{\varepsilon}^{h}  \rangle _{L_{v}^{2}\dot{H}_{h}^{s}} | \leq \Vert  \overline V^h_{\varepsilon}\Vert   _{L_{v}^{\infty}\dot{H}_{h}^{\frac12}}\Vert  W_{\varepsilon}\Vert   _{L_{v}^{2}\dot{H}_{h}^{s+ \frac12}}\Vert  \nabla^hW_{\varepsilon}^{h}\Vert   _{L_{v}^{2}\dot{H}_{h}^{s}}\\
 +\Vert  \nabla \overline V^h_{\varepsilon}\Vert   _{L_{v}^{\infty}L^{2}}\Vert  \nabla^hW_{\varepsilon}\Vert   _{L_{v}^{2}\dot{H}_{h}^{s}}\Vert  W_{\varepsilon}^{h}\Vert   _{L_{v}^{2}\dot{H}_{h}^{s}} \,.
\end{aligned}
\end{equation}
Then we use the interpolation inequality
$$
\Vert  W_{\varepsilon}\Vert   _{L_{v}^{2}\dot{H}_{h}^{s+ \frac12}}\Vert  \nabla^hW_{\varepsilon}^{h}\Vert   _{L_{v}^{2}\dot{H}_{h}^{s}}\lesssim\Vert  W_{\varepsilon}\Vert   _{L_{v}^{2}\dot{H}_{h}^{s}}^{\frac12}\Vert  \nabla^hW_{\varepsilon}^{h}\Vert   _{L_{v}^{2}\dot{H}_{h}^{s}}^{3/2}
$$
along with the convexity inequalities $ab\leq\tfrac{3}{4}a^{4/3}+\tfrac{1}{4}b^{4}$ and $ab\leq\tfrac{1}{2}a^{2}+\tfrac{1}{2}b^{2}$,  to infer that again with the choice~(\ref{choiceCeps}) for~$C_\eps$,
$$
\big | \langle \nabla^hQ_{\varepsilon} (t) ,W_{\varepsilon}^{h}  (t) \rangle _{L_{v}^{2}\dot{H}_{h}^{s}} \big |\leq\dfrac{1}{4}\Vert  \nabla^hW_{\varepsilon}(t)\Vert   _{L_{v}^{2}\dot{H}_{h}^{s}}^{2}+C_{\varepsilon}(t)\Vert  W_{\varepsilon}(t)\Vert   _{L_{v}^{2}\dot{H}_{h}^{s}}^{2}\,.
$$
The first result of the proposition is therefore proved in the case when~$\alpha = 0$ and~$-1<s<1$.

$\qquad \bullet$ To go further in the induction process, let~$ k \in \N$ be given and suppose the result proved for all $\alpha\in\mathbb{N} ^3$ such that $ | \alpha|\leq k$, still for~$-1<s<1$.
Now consider~$\alpha\in\mathbb{N} ^3$ such that~$ |\alpha |=k+1$. The vector field~$\partial^{\alpha}W_{\varepsilon}$ solves
$$
\partial_{t}\partial^{\alpha}W_{\varepsilon}+\partial^{\alpha} (\overline V^h_{\varepsilon}\cdot\nabla^hW_{\varepsilon} )-\Delta_{h}\partial^{\alpha}W_{\varepsilon}-\varepsilon^{2}\partial_{3}^{2}\partial^{\alpha}W_{\varepsilon}=- (\nabla^h\partial^{\alpha}Q_{\varepsilon},\varepsilon^{2}\partial_{3}\partial^{\alpha}Q_{\varepsilon} )\,.
$$
An energy estimate in~$L_{v}^{2}\dot{H}_{h}^{s}$ gives
$$
\begin{array}[t]{l}
\dfrac{1}{2}\dfrac{\mbox{d}}{\mbox{d}t}\Vert  \partial^{\alpha}W_{\varepsilon}\Vert   _{L_{v}^{2}\dot{H}_{h}^{s}}^{2}+ \langle \partial^{\alpha} (\overline V^h_{\varepsilon}\cdot\nabla^hW_{\varepsilon} ),\partial^{\alpha}W_{\varepsilon} \rangle _{L_{v}^{2}\dot{H}_{h}^{s}}+\Vert  \nabla^h\partial^{\alpha}W_{\varepsilon}\Vert   _{L_{v}^{2}\dot{H}_{h}^{s}}^{2}\\[3mm]
\qquad\qquad\qquad\qquad=- \langle \nabla^h\partial^{\alpha}Q_{\varepsilon},\partial^{\alpha}W_{\varepsilon}^{h} \rangle _{L_{v}^{2}\dot{H}_{h}^{s}}-\varepsilon^{2} \langle \partial_{3}\partial^{\alpha}Q_{\varepsilon},\partial^{\alpha}W_{\varepsilon}^{3} \rangle _{L_{v}^{2}\dot{H}_{h}^{s}}\,.\end{array}
$$
We split $ \langle \partial^{\alpha} (\overline V^h_{\varepsilon}\cdot\nabla^hW_{\varepsilon} ),\partial^{\alpha}W_{\varepsilon} \rangle _{L_{v}^{2}\dot{H}_{h}^{s}}$ into two contributions:
\begin{equation}\label{twocontribsalphabeta}
 \langle \overline V^h_{\varepsilon}\cdot\nabla^h\partial^{\alpha}W_{\varepsilon},\partial^{\alpha}W_{\varepsilon} \rangle _{L_{v}^{2}\dot{H}_{h}^{s}}+\underset{0 < \beta\leq \alpha}{\sum}C_{\beta} \langle \partial^{\beta}\overline V^h_{\varepsilon}\cdot\nabla^h\partial^{\alpha-\beta}W_{\varepsilon},\partial^{\alpha}W_{\varepsilon} \rangle _{L_{v}^{2}\dot{H}_{h}^{s}}\,.
\end{equation}
The first term in~(\ref{twocontribsalphabeta}) satisfies, as in~\cite[Lemma 1.1]{chemin10}
$$
| \langle \overline V^h_{\varepsilon}\cdot\nabla^h\partial^{\alpha}W_{\varepsilon},\partial^{\alpha}W_{\varepsilon} \rangle _{\dot{H}_{h}^{s}}| \begin{array}[t]{l}
\lesssim\Vert  \nabla^h\overline V^h_{\varepsilon}\Vert   _{L_{h}^{2}}\Vert  \nabla^h\partial^{\alpha}W_{\varepsilon}\Vert   _{\dot{H}_{h}^{s}}\Vert  \partial^{\alpha}W_{\varepsilon}\Vert   _{\dot{H}_{h}^{s}}\\[2mm]
\leq\dfrac{1}{4}\Vert  \nabla^h\partial^{\alpha}W_{\varepsilon}\Vert   _{\dot{H}_{h}^{s}}^{2}+C\Vert  \nabla^h\overline V^h_{\varepsilon}\Vert   _{L_{h}^{2}}^{2}\Vert  \partial^{\alpha}W_{\varepsilon}\Vert   _{\dot{H}_{h}^{s}}^{2}\end{array}
$$
so 
$$
| \langle \overline V^h_{\varepsilon}\cdot\nabla^h\partial^{\alpha}W_{\varepsilon},\partial^{\alpha}W_{\varepsilon} \rangle _{L_{v}^{2}\dot{H}_{h}^{s}} | \leq\dfrac{1}{4}\Vert  \nabla^h\partial^{\alpha}W_{\varepsilon}\Vert   _{L_{v}^{2}\dot{H}_{h}^{s}}^{2}+C\Vert  \nabla^h\overline V^h_{\varepsilon}\Vert   _{L_{v}^{\infty}L_{h}^{2}}^{2}\Vert  \partial^{\alpha}W_{\varepsilon}\Vert   _{L_{v}^{2}\dot{H}_{h}^{s}}^{2}\,.
$$
For the remaining terms in~(\ref{twocontribsalphabeta}), as  $\overline V^h_{\varepsilon}$ is a horizontal, divergence free vector field,   two-dimensional  product laws give
$$
|\langle \partial^{\beta}\overline V^h_{\varepsilon}\cdot\nabla^h\partial^{\alpha-\beta}W_{\varepsilon},\partial^{\alpha}W_{\varepsilon}\rangle _{\dot{H}_{h}^{s}} | \begin{array}[t]{l}
= \big |\langle \mbox{div}_{h} \big(\partial^{\beta}\overline V^h_{\varepsilon}\otimes\partial^{\alpha - \beta}W_{\varepsilon} \big),\partial^{\alpha}W_{\varepsilon} \rangle _{\dot{H}_{h}^{s}} \big |\\[3mm]
\lesssim\Vert  \partial^{\beta}\overline V^h_{\varepsilon}\otimes\partial^{\alpha-\beta}W_{\varepsilon}\Vert   _{\dot{H}_{h}^{s}}\Vert  \nabla^h\partial^{\alpha}W_{\varepsilon}\Vert   _{\dot{H}_{h}^{s}}\\[3mm]
\lesssim\Vert  \partial^{\beta}\overline V^h_{\varepsilon}\Vert   _{\dot{H}_{h}^{\frac{s+1}2 }}\Vert  \partial^{\alpha-\beta}W_{\varepsilon}\Vert   _{\dot{H}_{h}^{\frac{s+1}2}}
\Vert  \nabla^h\partial^{\alpha}W_{\varepsilon}\Vert   _{\dot{H}_{h}^{s}}\end{array}
$$
so
$$
|\langle \partial^{\beta}\overline V^h_{\varepsilon}\cdot\nabla^h\partial^{\alpha-\beta}W_{\varepsilon},\partial^{\alpha}W_{\varepsilon} \rangle _{L_{v}^{2}\dot{H}_{h}^{s}} | \leq\dfrac{1}{4}\Vert  \nabla^h\partial^{\alpha}W_{\varepsilon}\Vert   _{L_{v}^{2}\dot{H}_{h}^{s}}^{2}+C \Vert  \partial^{\beta}\overline V^h_{\varepsilon}\Vert   _{L_{v}^{\infty}\dot{H}_{h}^{\frac{s+1}2 }}^{2}\Vert  \partial^{\alpha-\beta}W_{\varepsilon}\Vert   _{L_{v}^{2}\dot{H}_{h}^{\frac{s+1}2 }}^{2}\,.
$$
Then we get
$$
\begin{aligned}
\dfrac{1}{2}\dfrac{\mbox{d}}{\mbox{d}t}\Vert  \partial^{\alpha}W_{\varepsilon}\Vert   _{L_{v}^{2}\dot{H}_{h}^{s}}^{2}+\dfrac{1}{2}\Vert  \nabla^h\partial^{\alpha}W_{\varepsilon}\Vert   _{L_{v}^{2}\dot{H}_{h}^{s}}^{2}
 +\varepsilon^{2}\Vert  \partial_{3}\partial^{\alpha}W_{\varepsilon}\Vert   _{L_{v}^{2}\dot{H}_{h}^{s}}^{2} \lesssim \|\nabla^h \overline V^h_{\varepsilon}\|_{L^\infty_v L^2_h}^2  \Vert  \partial^{\alpha}W_{\varepsilon}\Vert   _{L_{v}^{2}\dot{H}_{h}^{s}}^{2}\\
  +  \big | \langle \nabla^h\partial^{\alpha}Q_{\varepsilon},\partial^{\alpha}W_{\varepsilon}^{h} \rangle _{L_{v}^{2}\dot{H}_{h}^{s}} - \varepsilon^{2} \langle \partial_{3}\partial^{\alpha}Q_{\varepsilon},\partial^{\alpha}W_{\varepsilon}^{3} \rangle _{L_{v}^{2}\dot{H}_{h}^{s}} \big | \\
  +C\underset{0 < \beta\leq\alpha}{{\displaystyle \sum}}\Vert  \partial^{\beta}\overline V^h_{\varepsilon}\Vert   _{L_{v}^{\infty}\dot{H}_{h}^{\frac{s+1}2 }}^{2}\Vert  \partial^{\alpha-\beta}W_{\varepsilon}\Vert   _{L_{v}^{2}\dot{H}_{h}^{\frac{s+1}2 }}^{2}\,. 
\end{aligned}
$$
Now let us estimate the pressure term.
We recall that
$$
 - \langle \nabla^h\partial^{\alpha}Q_{\varepsilon},\partial^{\alpha}W_{\varepsilon}^{h} \rangle _{L_{v}^{2}\dot{H}_{h}^{s}}- \langle \varepsilon^{2}\,\partial_{3}\partial^{\alpha}Q_{\varepsilon},\partial^{\alpha}W_{\varepsilon}^{3} \rangle _{L_{v}^{2}\dot{H}_{h}^{s}}= (\varepsilon^{2}-1 ) \langle \nabla^h\partial^{\alpha}Q_{\varepsilon},\partial^{\alpha}W_{\varepsilon}^{h} \rangle _{L_{v}^{2}\dot{H}_{h}^{s}}  
 $$
and we claim that
\begin{equation}\label{anotherclaim}
 \big  | \langle \nabla^h\partial^{\alpha}Q_{\varepsilon},\partial^{\alpha}W_{\varepsilon}^{h}\rangle _{L_{v}^{2}\dot{H}_{h}^{s}} (t) \big |\leq\dfrac{1}{4}\Vert  \nabla^h\partial^{\alpha}W_{\varepsilon}(t) \Vert   _{L_{v}^{2}\dot{H}_{h}^{s}}^{2}+C_{1,\varepsilon}(t) +C_{2,\varepsilon}(t) \Vert  \partial^{\alpha}W_{\varepsilon}(t) \Vert   _{L_{v}^{2}\dot{H}_{h}^{s}}^{2}
\end{equation}
with $C_{1,\varepsilon}$ and $C_{2,\varepsilon}$ uniformly bounded in $L_{1} (\mathbb{R}^{+} )$.
By the induction assumption (noticing that~$(s+1)/2+\alpha-1 < \alpha$) we deduce that~$\displaystyle 
\underset{0 < \beta\leq\alpha}{\sum}\Vert  \partial^{\beta}\overline V^h_{\varepsilon}\Vert   _{L_{v}^{\infty}\dot{H}_{h}^{\frac{s+1}2}}^{2}\Vert  \partial^{\alpha-\beta}W_{\varepsilon}\Vert   _{L_{v}^{2}\dot{H}_{h}^{\frac{s+1}2}}^{2}
$
is uniformly bounded in~$L^{1} (\mathbb{R}^{+} )$ so  up to changing~$C_{1,\varepsilon}$ and~$C_{2,\varepsilon}$ we get 
$$
\dfrac{\mbox{d}}{\mbox{d}t}\Vert  \partial^{\alpha}W_{\varepsilon} (t) \Vert   _{L_{v}^{2}\dot{H}_{h}^{s}}^{2}+\Vert  \nabla^h\partial^{\alpha}W_{\varepsilon} (t) \Vert   _{L_{v}^{2}\dot{H}_{h}^{s}}^{2}\leq C_{1,\varepsilon} (t) +C_{2,\varepsilon} (t) \Vert  \partial^{\alpha}W_{\varepsilon} (t) \Vert   _{L_{v}^{2}\dot{H}_{h}^{s}}^{2} \,.
$$
Using Gronwall's lemma in turn this implies that
$$
\Vert  \partial^{\alpha}W_{\varepsilon} (t )\Vert   _{L_{v}^{2}\dot{H}_{h}^{s}}^{2}+\int_{0}^{t}\Vert  \nabla^h\partial^{\alpha}W_{\varepsilon} (t' )\Vert   _{L_{v}^{2}\dot{H}_{h}^{s}}^{2}\mbox{d}t'\lesssim\Vert  \partial^{\alpha}W_{\varepsilon,0}\Vert   _{L_{v}^{2}\dot{H}_{h}^{s}}^{2}
$$
and the bounds on $W_{\varepsilon ,0}$ conclude  the proof if~$-1<s<1$.
 It remains to prove the estimate~(\ref{anotherclaim}) on the pressure term.
We shall adapt the  computations of the case~$\alpha=0$. We define
$$
N_{\varepsilon,\alpha,\beta}\eqdefa\partial^{\beta}\overline V^h_{\varepsilon}\cdot\nabla^h\partial^{\alpha-\beta}W_{\varepsilon}^{h}+\partial_{3} (\partial^{\alpha-\beta}W_{\varepsilon}^{3}\partial^{\beta}\overline V^h_{\varepsilon} )
$$
and recalling~(\ref{formulaQeps}) we get, since~$\nabla^h (-\Delta_h - \eps^2 \partial_3^2)^{-1} \mbox{div}_h$ is a Fourier multiplier of order 0,
$$
 \langle \nabla^h\partial^{\alpha}Q_{\varepsilon},\partial^{\alpha}W_{\varepsilon}^{h} \rangle _{L_{v}^{2}\dot{H}_{h}^{s}}\lesssim\underset{0 \leq \beta\leq\alpha}{\sum}\Vert  N_{\varepsilon,\alpha,\beta}\Vert   _{L_{v}^{2}\dot{H}_{h}^{r_{\beta}}}\Vert  \partial^{\alpha}W_{\varepsilon}^{h} (t,\cdot)\Vert   _{L_{v}^{2}\dot{H}_{h}^{2s-r_{\beta}}}
$$
where~$r_\beta$ is any real number.
Then we define
$$
\left(*\right)_{\alpha,\beta}:=\Vert  N_{\varepsilon,\alpha,\beta}\Vert   _{L_{v}^{2}\dot{H}_{h}^{r_{\beta}}}\Vert  \partial^{\alpha}W_{\varepsilon}^{h} (t,\cdot)\Vert   _{L_{v}^{2}\dot{H}_{h}^{2s-r_{\beta}}} \, .
$$
The term $ (* )_{\alpha,0}$ can be treated as    was done for~$\alpha=0$, changing~$W_{\varepsilon}^{h}$ into~$\partial^{\alpha}W_{\varepsilon}^{h}$. 
So we have, as in the proof of~(\ref{claimQ}),
\begin{equation}\label{estialpha0}
\left| (* )_{\alpha,0}\right|\leq\dfrac{1}{8}\Vert  \nabla^h\partial^{\alpha}W_{\varepsilon}\Vert   _{L_{v}^{2}\dot{H}_{h}^{s}}^{2}+C\Vert  \partial^{\alpha}W_{\varepsilon}\Vert   _{L_{v}^{2}\dot{H}_{h}^{s}}^{2}\Vert  \nabla^h\partial^{\alpha}\overline V^h_{\varepsilon}\Vert   _{L_{v}^{\infty}L_{h}^{2}}^{2}\big(1+\Vert  \partial^{\alpha}\overline V^h_{\varepsilon}\Vert   _{L_{v}^{\infty}L_{h}^{2}}^{2} \big) \, .
\end{equation}
For the others terms  we have the following estimates.

$\qquad \circ$ If $0<s<1$ we choose~$r_\beta = s-1$ like in the case~$\alpha = 0$, and  as in~(\ref{estiQW}) we obtain
$$
\begin{aligned}
\sum_{0<\beta\leq\alpha}\left| (* )_{\alpha,\beta}\right|\leq \dfrac{1}{8}\Vert  \nabla^h\partial^{\alpha}W_{\varepsilon}\Vert   _{L_{v}^{2}\dot{H}^{s}_h}^{2}& +C\,\underset{0<\beta\leq\alpha}{{\displaystyle \sum}}\Vert  \partial^{\beta}\overline V^h_{\varepsilon}\Vert   _{L_{v}^{\infty}\dot{H}_{h}^{\frac12}}^{2}\Vert  \nabla^h\partial^{\alpha-\beta}W_{\varepsilon}\Vert   _{L_{v}^{2}\dot{H}_{h}^{s-\frac12}}^{2}\\
& +{} C \underset{0<\beta\leq\alpha}{{\displaystyle \sum}}\Vert  \nabla\partial^{\beta}\overline V^h_{\varepsilon}\Vert   _{L_{v}^{\infty}L_{h}^{2}}^{2}\Vert  \partial^{\alpha-\beta}W_{\varepsilon}^{3}\Vert   _{L_{v}^{2}\dot{H}_{h}^{s}}^{2}\,. \end{aligned}
$$
Then we define, recalling~(\ref{estialpha0}),
$$
C_{1,\varepsilon} \eqdefa C\,\underset{0<\beta\leq\alpha}{{\displaystyle \sum}}\Vert  \partial^{\beta}\overline V^h_{\varepsilon}\Vert   _{L_{v}^{\infty}\dot{H}_{h}^{\frac12}}^{2}\Vert  \nabla^h\partial^{\alpha-\beta}W_{\varepsilon}\Vert   _{L_{v}^{2}\dot{H}_{h}^{s-\frac12}}^{2}+C\,\underset{0<\beta\leq\alpha}{{\displaystyle \sum}}\Vert  \nabla\partial^{\beta}\overline V^h_{\varepsilon}\Vert   _{L_{v}^{\infty}L_{h}^{2}}^{2}\Vert  \partial^{\alpha-\beta}W_{\varepsilon}^{3}\Vert   _{L_{v}^{2}\dot{H}_{h}^{s}}^{2}
$$
and
$$
C_{2,\varepsilon} \eqdefa  C\Vert  \nabla^h\partial^{\alpha}\overline V^h_{\varepsilon}\Vert   _{L_{v}^{\infty}L_{h}^{2} }^{2}\big(1+\Vert  \partial^{\alpha}\overline V^h_{\varepsilon}\Vert   _{L_{v}^{\infty}L_{h}^{2}  }^{2}\big)
$$
to get
$$
 \sum_{0 \leq \beta \leq \alpha} \big|(* )_{\alpha,\beta} \big|\leq\dfrac{1}{4}\Vert  \nabla^h\partial^{\alpha}W_{\varepsilon}\Vert   _{L_{v}^{2}\dot{H}_{h}^{s}}^{2}+C_{1,\varepsilon}+C_{2,\varepsilon}\Vert  \partial^{\alpha}W_{\varepsilon}\Vert   _{L_{v}^{2}\dot{H}_{h}^{s}}^{2}\,.
$$
Note that the famillies  $ (C_{1,\varepsilon} )_{\varepsilon>0}$ and $ (C_{2,\varepsilon} )_{\varepsilon>0}$ are bounded in $L^{1} (\mathbb{R}^{+} )$ thanks to the induction assumption and Proposition~\ref{estimatesoverlinev}.

$\qquad\circ$ If $s=0$ then following the steps leading to~(\ref{estimateW1})-(\ref{estimateW2}) we choose~$r_\beta  =-1/2$ and write
$$
 \big| (* )_{\alpha,\beta} \big |
 \lesssim \Vert  \partial^{\alpha}W_{\varepsilon}^{h}\Vert   _{L_{v}^{2}\dot{H}_{h}^{\frac12}} \big( \Vert  \partial^{\beta}\overline V^h_{\varepsilon}\Vert   _{L_{v}^{\infty}\dot{H}_{h}^{\frac12}}\Vert  \nabla^h\partial^{\alpha-\beta}W_{\varepsilon}\Vert   _{L^{2}} + \Vert  \nabla\partial^{\beta}\overline V^h_{\varepsilon}\Vert   _{L_{v}^{\infty}L_{h}^{2}}\Vert  \partial^{\alpha-\beta}W_{\varepsilon}^{3}\Vert   _{L_{v}^{2}\dot{H}_{h}^{\frac12}}\big)
$$
so, by interpolation, we get
$$
\big | (* )_{\alpha,\beta} \big |\begin{array}[t]{l}
\lesssim \Vert  \partial^{\alpha}W_{\varepsilon}^{h}\Vert   _{L^{2}}^{\frac12} \, \Vert  \partial^{\alpha}\nabla^hW_{\varepsilon}^{h}\Vert   _{L^{2}}^{\frac12}\,\Vert  \partial^{\beta}\overline V^h_{\varepsilon}\Vert   _{L_{v}^{\infty}\dot{H}_{h}^{\frac12}}\Vert  \nabla^h\partial^{\alpha-\beta}W_{\varepsilon}\Vert   _{L^{2}}\\[2mm]
\quad+ \Vert  \partial^{\alpha}W_{\varepsilon}^{h}\Vert   _{L^{2}}^{\frac12} \, \Vert  \partial^{\alpha}\nabla^hW_{\varepsilon}^{h}\Vert   _{L^{2}}^{\frac12}\,\Vert  \nabla\partial^{\beta}\overline V^h_{\varepsilon}\Vert   _{L_{v}^{\infty}L_{h}^{2}}\Vert  \partial^{\alpha-\beta}W_{\varepsilon}^{3}\Vert   _{L_{v}^{2}\dot{H}_{h}^{\frac12}} \, .
\end{array}
$$
When~$\beta>0$, the convexity inequality  $abc\leq\tfrac{1}{4}a^{4}+\tfrac{1}{4}b^{4}+\tfrac{1}{2}c^{2}$ leads to
$$
\underset{0<\beta\leq \alpha}{{\displaystyle \sum}} \big | (* )_{\alpha,\beta} \big |\leq\begin{array}[t]{l}
\dfrac{1}{8}\Vert  \nabla^h\partial^{\alpha}W_{\varepsilon}\Vert   _{L^{2}}^{2}+C\,\underset{0<\beta\leq \alpha}{{\displaystyle \sum}}\Vert  \partial^{\alpha}W_{\varepsilon}^{h}\Vert   _{L^{2}}^{2}\big(\Vert  \partial^{\beta}\overline V^h_{\varepsilon}\Vert   _{L_{v}^{\infty}\dot{H}_{h}^{\frac12}}^{4}+\Vert  \nabla\partial^{\beta}\overline V^h_{\varepsilon}\Vert   _{L_{v}^{\infty}L_{h}^{2}}^{4}\big)\\[3mm]
{}+C\,\underset{0<\beta\leq \alpha}{{\displaystyle \sum}}\big(\Vert  \nabla^h\partial^{\alpha-\beta}W_{\varepsilon}\Vert   _{L^{2}}^{2}+\Vert  \nabla^h\partial^{\alpha-\beta}W_{\varepsilon}^{3}\Vert   _{L_{v}^{2}\dot{H}_{h}^{-\frac12}}^{2}\big)\,.\end{array}
$$
We define 
$$
C_{1,\varepsilon} \eqdefa C\,\underset{0<\beta\leq \alpha}{\sum}\big(\Vert  \nabla^h\partial^{\alpha-\beta}W_{\varepsilon}\Vert   _{L^{2}}^{2}+\Vert  \nabla^h\partial^{\alpha-\beta}W_{\varepsilon}^{3}\Vert   _{L_{v}^{2}\dot{H}_{h}^{-\frac12}}^{2} \big)
$$
and
$$
C_{2,\varepsilon} \eqdefa C\Vert  \nabla^h\partial^{\alpha}\overline V^h_{\varepsilon}\Vert   _{L_{v}^{\infty}L_{h}^{2}}^{2} \big(1+\Vert  \partial^{\alpha}\overline V^h_{\varepsilon}\Vert   _{L_{v}^{\infty}L_{h}^{2}}^{2} \big)+C\,\underset{0<\beta\leq \alpha}{{\displaystyle \sum}} \big(\Vert  \partial^{\beta}\overline V^h_{\varepsilon}\Vert   _{L_{v}^{\infty}\dot{H}_{h}^{\frac12}}^{4}+\Vert  \nabla\partial^{\beta}\overline V^h_{\varepsilon}\Vert   _{L_{v}^{\infty}L_{h}^{2}}^{4} \big)
$$ 
to get when~$s=0$ and recalling~(\ref{estialpha0}),
$$
 \sum_{0< \beta \leq \alpha} \big|(* )_{\alpha,\beta} \big|\leq\dfrac{1}{4}\Vert  \nabla^h\partial^{\alpha}W_{\varepsilon}\Vert   _{L^2}^{2}+C_{1,\varepsilon}+C_{2,\varepsilon}\Vert  \partial^{\alpha}W_{\varepsilon}\Vert   _{L^2}^{2}\,.
$$
Again note that the famillies  $ (C_{1,\varepsilon} )_{\varepsilon>0}$ and $ (C_{2,\varepsilon} )_{\varepsilon>0}$ are bounded in $L^{1} (\mathbb{R}^{+} )$ thanks to the induction assumption and Proposition~\ref{estimatesoverlinev}.

$\qquad\circ$ If $-1<s<0$ then following the computations leading to~(\ref{estis-1}), we write
$$
\begin{aligned}
\left| (* )_{\alpha,\beta}\right|\lesssim\Vert  \partial^{\beta}\overline V^h_{\varepsilon}\Vert   _{L_{v}^{\infty}\dot{H}_{h}^{\frac12}}\Vert  \partial^{\alpha-\beta}W_{\varepsilon}\Vert   _{L_{v}^{2}\dot{H}_{h}^{s+ \frac12}}\Vert  \nabla\partial^{\alpha}W_{\varepsilon}^{h}\Vert   _{L_{v}^{2}\dot{H}_{h}^{s}}\\
+\Vert  \nabla\partial^{\beta}\overline V^h_{\varepsilon}\Vert   _{L_{v}^{\infty}L^{2}}\Vert  \nabla\partial^{\alpha-\beta}W_{\varepsilon}\Vert   _{L_{v}^{2}\dot{H}_{h}^{s}}\Vert  \partial^{\alpha}W_{\varepsilon}^{h}\Vert   _{L_{v}^{2}\dot{H}_{h}^{s}}
\end{aligned}
$$
so
$$
\underset{0<\beta\leq \alpha}{\sum}\left| (* )_{\alpha,\beta}\right|\leq\begin{array}[t]{l}
\dfrac{1}{4}\Vert  \nabla^h\partial^{\alpha}W_{\varepsilon}\Vert   _{L_{v}^{2}\dot{H}_{h}^{s}}^{2}+\underset{\beta<\alpha}{{\displaystyle \sum}}\Vert  \partial^{\beta}\overline V^h_{\varepsilon}\Vert   _{L_{v}^{\infty}\dot{H}_{h}^{\frac12}}^{2}\Vert  \partial^{\alpha-\beta}W_{\varepsilon}\Vert   _{L_{v}^{2}\dot{H}_{h}^{s+ \frac12}}^{2}\\[4mm]
+\underset{\beta<\alpha}{{\displaystyle \sum}}\Vert  \nabla\partial^{\beta}\overline V^h_{\varepsilon}\Vert   _{L_{v}^{\infty}L^{2}}\Vert  \nabla\partial^{\alpha-\beta}W_{\varepsilon}\Vert   _{L_{v}^{2}\dot{H}_{h}^{s}}\Vert  \partial^{\alpha}W_{\varepsilon}^{h}\Vert   _{L_{v}^{2}\dot{H}_{h}^{s}}\,.\end{array}
$$
In this case, we define
$$
C_{1,\varepsilon} \eqdefa C\,\underset{0<\beta\leq\alpha}{\sum}\Vert  \partial^{\beta}\overline V^h_{\varepsilon}\Vert   _{L_{v}^{\infty}\dot{H}_{h}^{\frac12}}^{2}\Vert  \partial^{\alpha-\beta}W_{\varepsilon}\Vert   _{L_{v}^{2}\dot{H}_{h}^{s+ \frac12}}^{2}
$$
and
$$
C_{2,\varepsilon} \eqdefa C\Vert  \nabla^h\partial^{\alpha}\overline V^h_{\varepsilon}\Vert   _{L_{v}^{\infty}L_{h}^{2}}^{2} \big(1+\Vert  \partial^{\alpha}\overline V^h_{\varepsilon}\Vert   _{L_{v}^{\infty}L_{h}^{2}}^{2} \big)+C\,\underset{0<\beta\leq\alpha}{{\displaystyle \sum}}\Vert  \nabla\partial^{\beta}\overline V^h_{\varepsilon}\Vert   _{L_{v}^{\infty}L^{2}}\Vert  \nabla\partial^{\alpha-\beta}W_{\varepsilon}\Vert   _{L_{v}^{2}\dot{H}_{h}^{s}}
$$
which as before are  bounded in $L^{1} (\mathbb{R}^{+} )$,
and we obtain, recalling~(\ref{estialpha0}),
$$
\sum_{0 \leq \beta \leq \alpha } (* )_{\alpha,\beta}\leq\dfrac{1}{4}\Vert  \nabla^h\partial^{\alpha}W_{\varepsilon}\Vert   _{L_{v}^{2}\dot{H}_{h}^{s}}^{2}+C_{1,\varepsilon}+C_{2,\varepsilon}\Vert  \partial^{\alpha}W_{\varepsilon}\Vert   _{L_{v}^{2}\dot{H}_{h}^{s}}^{2}\,.
$$
The first part of the proposition is proved.

\medskip

Now let us turn to the second part. As noted above,
for all $\alpha\in\mathbb{N}^{3}$, $\partial^{\alpha}W_{\varepsilon}$ satisfies
$$ \partial_{t}\partial^{\alpha}W_{\varepsilon}+\partial^{\alpha} (\overline V^h_{\varepsilon}\cdot\nabla^hW_{\varepsilon} )-\Delta_{h}\partial^{\alpha}W_{\varepsilon}-\varepsilon^{2}\,\partial_{3}^{2}\partial^{\alpha}W_{\varepsilon}=-\partial^{\alpha} (\nabla^hQ_{\varepsilon},\varepsilon^{2}\,\partial_{3}Q_{\varepsilon} )\,.
$$
Defining
$$
 g_{\varepsilon} \eqdefa  \overline V^h_{\varepsilon}\cdot\nabla^hW_{\varepsilon} + (\nabla^hQ_{\varepsilon},\varepsilon^{2}\,\partial_{3}Q_{\varepsilon} ) \, ,
$$
an energy estimate in~$L^2_v\dot H^{-1}_h$ gives
\begin{equation}\label{estimatedotH-1}
\begin{aligned}
\dfrac{1}{2} \Vert \partial^{\alpha}W_{\varepsilon} (t)\Vert _{L^2_v \dot H_{h}^{-1}}^{2} +{\displaystyle \int_{0}^{t}\Vert \partial^{\alpha}W_{\varepsilon} (t'  )\Vert _{L^{2}}^{2}\mbox{d}t'}\leq\dfrac{1}{2}\Vert \partial^{\alpha}W_{\varepsilon,0}\Vert _{L^2_v\dot H_{h}^{-1}}^{2} \\+{\displaystyle \int_{0}^{t} \big | \left\langle \partial^\alpha g_{\varepsilon},\partial^{\alpha}W_{\varepsilon}\right\rangle _{L^2_v\dot H_{h}^{-1}} (t')\big |\, d t'} \, .
\end{aligned}
\end{equation}
We define $K_{\varepsilon} (t )\eqdefa\displaystyle \sup_{0\leq t'\leq t}   \Vert \partial^{\alpha}W_{\varepsilon}(t') \Vert _{\dot H_{h}^{-1}}$, so that
$$
\dfrac{1}{2}K_{\varepsilon}^{2} (t ) \leq\dfrac{1}{2}\Vert \partial^{\alpha}W_{\varepsilon,0}\Vert _{L^2_v \dot H_{h}^{-1}}^{2} +K_{\varepsilon}(t){\displaystyle \int_{0}^{t}\Vert \partial^\alpha g_{\varepsilon}(t')\Vert _{L^2_v \dot H_{h}^{-1}}\, d t'} \, .
$$
This implies that
\begin{equation}\label{estiKeps}
\dfrac{1}{4}K_{\varepsilon}^{2} (t)\leq\dfrac{1}{2}\Vert \partial^{\alpha}W_{\varepsilon,0}\Vert _{L^2_v \dot H_{h}^{-1}}^{2} +\Vert \partial^\alpha g_{\varepsilon}\Vert _{L^{1}  (\mathbb{R}^{+},L^2_v \dot H_{h}^{-1} )}^{2} \,.
\end{equation}
But according to~(\ref{estimatedotH-1}) we know that
$$
\displaystyle \int_{0}^{t}\Vert \partial^{\alpha}W_{\varepsilon} (t')\Vert _{L^{2}}^{2}\, d t' \leq \dfrac{1}{2}\Vert \partial^{\alpha}W_{\varepsilon,0}\Vert _{L^2_v \dot H_{h}^{-1}}^{2} +K_{\varepsilon} (t){\displaystyle \int_{0}^{t}\Vert \partial^\alpha g_{\varepsilon} (t')\Vert _{L^2_v \dot H_{h}^{-1}}\, d t'} \, ,
$$
so with~(\ref{estiKeps}) we infer that
$$
\int_{0}^{t}\Vert \partial^{\alpha}W_{\varepsilon} (t')\Vert _{L^{2}}^{2} dt'\lesssim\Vert \partial^{\alpha}W_{\varepsilon,0}\Vert _{L^2_v\dot H_{h}^{-1}}^{2}+\Vert \partial^\alpha g_{\varepsilon}\Vert _{L^{1} (\mathbb{R}^{+},L^2_v \dot H_{h}^{-1}  )}^{2}  \, .
$$
It remains to estimate~$\Vert \partial^\alpha g_{\varepsilon}\Vert_{L^{1} (\mathbb{R}^{+},L^2_v \dot H_{h}^{-1}  )}$. 
As $\overline V^h_{\varepsilon}$ is a divergence free vector field, we have  
\begin{equation}\label{alphabeta}
\begin{aligned}
\Vert \partial^{\alpha} (\overline V^h_{\varepsilon}\cdot\nabla^hW_{\varepsilon} ) \Vert _{L^{1} (\mathbb{R}^{+},L^2_v \dot{H}_{h}^{-1})}& \leq\Vert \partial^{\alpha} (\overline V^h_{\varepsilon}\otimes W_{\varepsilon} ) \Vert _{L^{1} (\mathbb{R}^{+},L ^{2})} \\
&\lesssim\underset{0\leq\beta\leq\alpha}{\sum} \Vert \partial^{\beta}\overline V^h_{\varepsilon} \Vert _{L^2(\R^+;L^2_v\dot{H}_{h}^{1/2})}   \Vert \partial^{\alpha-\beta}W_{\varepsilon} \Vert _{L^2(\R^+;L^\infty_v\dot{H}_{h}^{1/2})}
\end{aligned}
\end{equation}
 which gives the expected bound due to Proposition~\ref{propositionWeps} (\ref{firstitempropositionWeps}) proved above. 
On the other hand, we recall that as computed in~(\ref{formulaQeps}),
$$
\Delta_h Q_{\varepsilon}-\varepsilon^{2}\,\partial_{3}^2Q_{\varepsilon} =\mbox{div}_{h} \big(\overline V^h_{\varepsilon}\cdot\nabla^{h}W_{\varepsilon}^{h}+\partial_{3} (W_{\varepsilon}^{3}\, \overline V^h_{\varepsilon} ) \big) \,.
$$
so since~$(\Delta_h -\varepsilon^{2}\,\partial_{3}^2)^{-1} \nabla_h\mbox{div}_{h}$   and~$(\Delta_h -\varepsilon^{2}\,\partial_{3}^2)^{-1} \eps \partial_3 \mbox{div}_{h}$ are zero-order Fourier multipliers, the same estimates give the expected a priori bound on~$(\nabla^hQ_{\varepsilon},\varepsilon^{2}\,\partial_{3}Q_{\varepsilon} ),$ and the result follows.
  \end{proof}

\end{document}